\documentclass[10pt,dvipsnames]{article}

\usepackage{amsmath} 
\usepackage{amssymb}
\usepackage{amsthm}
\usepackage{graphicx}
\usepackage{hyperref}
\hypersetup{citecolor = blue,colorlinks,
			linkcolor = black,
			urlcolor = blue}
\usepackage{enumitem}
\usepackage[capitalise]{cleveref}
\usepackage{mathrsfs}
\usepackage{tikz-cd}
\usepackage{xcolor}
\usepackage[toc,page]{appendix}

\usepackage[margin=1.6 in, top=1.2 in, bottom=1.1 in]{geometry}

\newtheorem{theorem}{Theorem}[section]
\newtheorem*{theorem*}{Theorem}
\newtheorem{lemma}[theorem]{Lemma}
\newtheorem*{lemma*}{Lemma}
\newtheorem{proposition}[theorem]{Proposition}
\newtheorem*{proposition*}{Proposition}
\newtheorem{corollary}[theorem]{Corollary}
\newtheorem*{corollary*}{Corollary}
\newtheorem{conjecture}[theorem]{Conjecture}
\newtheorem*{conjecture*}{Conjecture}
\newtheorem{definition}[theorem]{Definition}
\newtheorem{question}[theorem]{Question}

\newtheorem*{example*}{Example}
\theoremstyle{definition}
\newtheorem{remark}[theorem]{Remark}
\newtheorem{example}[theorem]{Example}

%NAMED THEOREMS
\theoremstyle{plain}
\newtheorem*{namedthm}{\namedthmname}
\newcounter{namedthm}
\makeatletter
	\newenvironment{named}[2]
	{\def\namedthmname{#1}
	\refstepcounter{namedthm}
	\namedthm[#2]\def\@currentlabel{#1}}
	{\endnamedthm}
\makeatother
%Example:
%\begin{named}{Incompleteness Theorem}{\cite{Godel1931}}
%\label{theorem:incompleteness}
% satement of theorem
%\end{named}
%In 1931 Kurt G{\"o}del proved the \ref{theorem:incopleteness}.

\newcommand{\define}[1]{\textit{#1}}
\renewcommand{\P}{\mathbb{P}}
\newcommand{\R}{\mathbb{R}}
\newcommand{\Z}{\mathbb{Z}}
\newcommand{\Q}{\mathbb{Q}}
\newcommand{\N}{\mathbb{N}}

\newcommand{\T}{\mathbb{T}}
\newcommand{\E}{\mathbb{E}}

\newcommand{\CD}{\mathcal{D}}

\newcommand{\dens}{\mathsf{d}}
\newcommand{\udens}{\overline{\mathsf{d}}}
\newcommand{\ldens}{\underline{\mathsf{d}}}
\newcommand{\ubdens}{\mathsf{d}^*}

\renewcommand{\emptyset}{\varnothing}

%PEEPS

\newcommand{\Cesaro}{Ces\`{a}ro}
\newcommand{\Erdos}{Erd\H{o}s}

\newcommand{\Folner}{F\o{}lner}

\newcommand{\Holder}{H\"{o}lder}

\newcommand{\Poincare}{Poincar\'{e}}
\newcommand{\Sarkozy}{S\'{a}rk\"{o}zy}
\newcommand{\Szemeredi}{Szemer\'{e}di}
\newcommand{\Turan}{Tur{\'a}n}

\newcommand{\one}{\boldsymbol{1}}

\newcommand{\FS}{\mathsf{FS}}

\newcommand{\lp}{\mathsf{L}}

\DeclareMathOperator{\fpl}{\mathsf{FPL}}
\DeclareMathOperator{\fpr}{\mathsf{FPR}}

\newcommand{\ultra}[1]{\mathsf{#1}}

\newcommand{\ghk}{|\!|\!|}
\newcommand{\ghkn}[1]{\ghk {#1} \ghk}
\newcommand{\un}[2]{{\mathsf{U}^{#1}(#2)}}

% \definecolor{ggreen}{RGB}{0,200,0}
% \definecolor{rred}{RGB}{150,0,70}
% \definecolor{yyellow}{RGB}{250,210,0}
% \newcommand{\edit}[3]{\color{#1}{#3}\color{black}\marginpar{\textcolor{#1}{[[#2]]}}}
% \newcommand{\fkr}[1]{\edit{ForestGreen}{FKR}{[[#1]]}}
% \newcommand{\new}[1]{\edit{blue}{NEW}{#1}}
% \newcommand{\dr}[1]{\edit{TealBlue}{DR}{[[#1]]}}
% \newcommand{\jpm}[1]{\edit{Orange}{JPM}{[[#1]]}}
% \newcommand{\bk}[1]{\edit{Plum}{BK}{[[#1]]}}
% \newcommand{\rsout}[1]{{\color{red}\sout{#1}}}
% \newcommand{\kronecker}[1]{\edit{blue}{uses Kronecker}{#1}}

\begin{document}

\author{Bryna Kra\and Joel Moreira\and Florian K.~Richter\and Donald Robertson}

\title{\textbf{Problems on infinite sumset configurations in the integers and beyond}}
\maketitle

\begin{abstract}
In contrast to finite arithmetic configurations, relatively little is known about which infinite patterns can be found in every set of natural numbers with positive density. 
Building on recent advances showing infinite sumsets can be found, we explore numerous open problems and obstructions to finding other infinite configurations 
in every set of natural numbers with positive density. 
\end{abstract}
{
\renewcommand{\baselinestretch}{0}
\tableofcontents
\renewcommand{\baselinestretch}{1.0}
}

\section{Introduction}

A central topic in arithmetic combinatorics is the study of additive structures in the integers, such as arithmetic progressions and sumsets. The foundations for the area were laid by early results in Ramsey Theory.
Van der Waerden~\cite{VDW} showed that in any finite coloring of the integers, there are arbitrarily long monochromatic arithmetic progressions. 
Thereafter \Erdos{} and \Turan{}~\cite{ET36} conjectured and
\Szemeredi{}~\cite{S} proved a density analog of van der Waerden's theorem:  every set of natural numbers with positive density contains arbitrarily long arithmetic progressions.
Many other finite arithmetic structures have been studied from a combinatorial perspective, including solutions to systems of linear Diophantine equations~\cite{Rado33, Schur16}, polynomial progressions~\cite{bergelson-leibman}, and sum-product configurations~\cite{Moreira}, to name a few.
The techniques developed to tackle these problems are robust enough to play a role in many other contexts, leading to advances on multi-dimensional patterns~\cite{Furstenberg-Katznelson}, progressions in the primes~\cite{Green_Tao08, Green_Tao10},  and patterns in amenable groups~\cite{austin2016}.
There have been many and varied developments besides those mentioned here; for more we recommend as a starting point the surveys and monographs~\cite{Bergelson96,conlon-fox-zhao,f-problems,Furstenberg-book,Graham_Rothschild_Spencer80,HK-book}.

This survey is about the arithmetic combinatorics of \textit{infinite} configurations, a relatively young branch of arithmetic combinatorics that complements and contrasts both classical and ongoing work on finite configurations. 
What infinite configurations should one expect to be present in every set of natural numbers with positive upper density? 
It follows from Ramsey's theorem~\cite{ramsey} that, no matter how the elements of $\N$ are colored using finitely many colors, there is an infinite set $B \subset \N$ such that
\[
\{ b_1 + b_2 : b_1,b_2 \in  B,~ b_1 \ne b_2 \}
\]
is monochromatic.
\Erdos{} made several conjectures that serve as density analogs of this result.
For example, amongst them was the statement that every positive density set of natural numbers contains the sum
\[
B+C := \{ b+c:b\in B,~ c\in C \}
\]
of two infinite sets $B,C \subset \N$.
Motivated by recent developments~\cite{ bergelson,choi-heath,DGJLLM15, Host,KMRR,KMRR2,MRR,Nathanson} on \Erdos{}'s conjectures, we explore various new directions of research pertaining to infinite sumset phenomena in the integers and beyond.  
We supply examples that provide natural obstructions to infinite sumset configurations and collect open problems of interest. 
Our focus is predominantly on questions and conjectures within the area of infinitary density combinatorics; for a survey of problems in infinitary partition regularity see~\cite{HLS03}.

\subsubsection*{Notions of density}
\label{subsec:density}

Throughout we let $\N=\{1,2,3,\ldots\}$ denote the positive integers and for $k\in\N$, let $k\N=\{kn:n\in\N\}$ denote the set of all positive multiples of $k$.
There are various notions of density we can consider on $\N$, including upper and lower natural density, and upper Banach density. We recall here those that we use most frequently.

If $A$ is a set of natural numbers, define its \define{upper Banach density} $\ubdens(A)$ to be
\begin{equation}
\label{eqn:positive_upper_density}
\ubdens(A):=\limsup_{N -M\to \infty} \frac{\big|A\cap \{M+1,M+2,\dots,N\}\big|}{N-M}.
\end{equation}
The \define{upper density} $\udens(A)$ and \define{lower density} $\ldens(A)$ of $A\subset\N$ are defined respectively as
\[
\udens(A)=\limsup_{N\to\infty}\frac{|A\cap\{1,\ldots,N\}|}{N}
\quad\text{and}\quad 
\ldens(A)=\liminf_{N\to\infty}\frac{|A\cap\{1,\ldots,N\}|}{N}.
\]
If $\udens(A) = \ldens(A)$, then we call this number the \define{natural  density} of $A$ and denote it by $\dens(A)$.
We formulate most of our conjectures and statements in terms of upper Banach density, the weakest measurement of largeness among these notions, but for most questions and conjectures one can ask about the analog under a stronger density assumption. 

In some settings, we  use  density with respect to a \define{\Folner{} sequence}  in $\N$, meaning a sequence $\Phi=(\Phi_N)_{N\in\N}$ of finite subsets of  $\N$ satisfying 
$$
\lim_{N\to\infty} \frac{|(\Phi_N+t)\cap \Phi_N|}{|
\Phi_N|} = 1
$$
for all $t\in\N$, where $\Phi_N+t$ denotes the shift of the set $\Phi_N$ by $t$. 

Analogous to upper and lower density, the \define{upper} and \define{lower density along $\Phi$} of a set $A\subset\N$ are defined as
\[
\udens_\Phi(A) = \limsup_{N\to\infty}\frac{|A\cap \Phi_N|}{|\Phi_N|}
\quad
\text{and}
\quad 
\ldens_\Phi(A)=\liminf_{N\to\infty}\frac{|A\cap \Phi_N|}{|\Phi_N|}.
\]
When the quantities above are equal, we denote the common value by $\dens_\Phi(A)$ and call it the \define{density} of $A$ along $\Phi$.

As needed, we make use of the natural  analogs of \Folner{} sequences and densities along \Folner{} sequences in the setting of $\N^d$ for $d\geq 2$.
See \cref{sec:amenable} for the precise definition of \Folner{} sequences in even more general settings.

\subsubsection*{Acknowledgements}
We are grateful to the organizers of \textit{Nilpotent structures in topological dynamics, ergodic theory and combinatorics} at IMPAN Mathematical Research and Conference Center, of the \textit{Zurich Dynamics Conference} at the University of Zurich, and of \textit{Pointwise Ergodic Theory \& Connections} at King's College London,  from whose hospitality and support we have benefited while preparing this manuscript. 
Part of this work was done while JM and FKR were members of IAS in Princeton and we thank the institute for the hospitality and acknowledge National Science Foundation grant  DMS-1926686.
Part of this work was done while JM, FKR and DR participated in an ICMS Research in Groups program, and we thank them for their support.
BK acknowledges support from the National
Science Foundation grants DMS-2054643 and DMS-2348315. We thank Thomas Bloom, Felipe Hern{\'a}ndez Castro, and Andrew Granville for fruitful discussions and comments on earlier drafts of this article, and thank the referees for their many comments improving the article.

\section{Infinite sumsets in sets of integers with positive density}
\label{sec:sumsets_integers}

\subsection{Searching for a density version of Hindman's theorem}
\label{sec:density-Hindman}

We begin by exploring what types of infinite sumset configurations can be found in subsets of the integers with positive upper Banach density.  
Results of this type can be viewed as density analogs of one of the central results in additive combinatorics: Hindman's theorem.
Given a set $B\subset\N$, its \define{finite sums set} is defined by
\begin{equation}
    \label{def:FS}
\FS(B) = \left\{ \sum_{n \in F}  n :   F \subset B\textup{ with } 0< |F| < \infty \right\}
\end{equation}
and consists of all natural numbers obtained by adding together finitely many distinct elements in $B$.
When $B$ is infinite we say that $\FS(B)$ is an \emph{IP-set}\footnote{Depending on the source, the acronym IP stands for \emph{infinite parallelepiped}~\cite{FW79} or \emph{idempotent}~\cite{FK85}, where the latter relates to the Stone-{\v C}ech compactification of the integers.}.

\begin{named}{Hindman's Theorem}
{\cite{Hindman-1974}}
If the natural numbers are partitioned into finitely many cells then at least one of these cells contains an IP-set.
\end{named}

It follows from Ramsey's theorem~\cite{ramsey} that for any finite coloring of $\N$ and any $k \in \N$ there exists an infinite set $B\subset \N$ such that 
\[
\left\{ \sum_{n \in F} n : F \subset B \textup{ with } |F| = k \right\}
\]
is monochromatic. However, before Hindman's theorem it was not even known that infinite monochromatic configurations of the form $B\cup \{b_1+b_2: b_1,b_2\in B,\,b_1\neq b_2\}$ exist for any finite coloring of $\N$ (see~\cite[Question~12]{HLS03} and the surrounding discussion).

In a first attempt to formulate a density version of Hindman's theorem, \Erdos{} asked whether every set $A\subset\N$ with positive upper density has a shift\[
A-t := \{ n \in \N : t + n \in A \}
\]
for some integer $t\geq 0$ that contains an IP-set.
Note that the shift is necessary because the set of odd numbers has density $1/2$, yet it does not contain a triple of the form $\{x,y,x+y\}$.
However, Straus provided an example (not published by Straus, but published by others, for example in~\cite[Theorem 11.6]{Hindman79}), showing that \Erdos{}'s question has a negative answer.

\begin{example}[Straus example]
\label{example_straus}
Let $(p_t)_{t\geq 1}$ be a sufficiently fast growing sequence of primes and let
$$A:=\N\setminus\left(\bigcup_{t\in\N}(p_t\N+t)\right)$$
Then $\dens(A)\geq1-\sum_{t=1}^\infty\frac1{p_t}>0$ but any shift $A-t$ of $A$ contains only finitely many multiples of $p_t$ and hence cannot contain an IP-set.
\end{example}

Undeterred by this example and in light of the ongoing advances surrounding \Szemeredi{}'s and Hindman's theorems at the time, \Erdos{}~\cite[Page~305]{erdos-1975} wrote: 
\begin{quote}
I have tried to formulate a conjecture which would be in the same relation to Hindman's theorem as \Szemeredi{}'s theorem is to van der Waerden's. I have not been very successful so far.
Perhaps the following result holds.
Let $a_1 < a_2 < \dots$ be a sequence of integers with positive upper density.
Then there is an integer $t$ and an infinite subsequence $a_{i_1} < a_{i_2} < \dots$ so that all the sums $a_{i_r} + a_{i_s} + t$ are again $a$'s.
\end{quote}
\Erdos{}'s proposed result, together with the weaker conjecture from~\cite[Pages 57--58]{erdos-1977} and~\cite[Page 105]{erdos-1980}, were proved by the authors.

\begin{theorem}[{\cite[Theorem 1.2, part (i)]{KMRR2}}]
\label{thm_BBt}
    Every set $A\subset\N$ of positive upper Banach density contains a sumset
\[t+\big\{ b_1+b_2:b_1,b_2\in B,~b_1\neq b_2 \big\}\]
for some shift $t\geq0$ and some infinite set $B \subset A$.
\end{theorem}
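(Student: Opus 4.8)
The plan is to translate the statement into a dynamical one via the Furstenberg correspondence principle and then to build $B$ by an inductive construction inside a measure-preserving system. Fix a \Folner{} sequence $\Phi=(\Phi_N)$ along which $|A\cap\Phi_N|/|\Phi_N|\to\ubdens(A)=:\delta>0$, let $X=\{0,1\}^{\Z}$ with the shift $T$, put $x_0=\one_A$, and let $E=\{x\in X:x(0)=1\}$, a clopen set. Passing to a subsequence we may assume $\frac{1}{|\Phi_N|}\sum_{n\in\Phi_N}\delta_{T^nx_0}$ converges weak-$\ast$ to a $T$-invariant measure $\mu$; then $\mu(E)=\delta$, since $\one_E$ is continuous, and $A=\{n\in\N:T^nx_0\in E\}$. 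The goal becomes to find $t\geq 0$ and an infinite $B\subset A$ with $T^{b+b'+t}x_0\in E$ for all $b\ne b'$ in $B$.

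A first reduction, which already exhibits the role of the shift, locates $B$ inside the orbit of $x_0$ near a limit point. By compactness, every infinite subset of $A$ has an infinite further subset $B=\{b_1<b_2<\cdots\}$ along which $T^{b_k}x_0$ converges to some $y\in X$, and $y\in E$ because $E$ is closed. For $b_i<b_j$ we have $T^{b_i+b_j+t}x_0=T^{b_i+t}\bigl(T^{b_j}x_0\bigr)\to T^{b_i+t}y$ as $j\to\infty$, and $E$ is open, so after a routine diagonal thinning of $B$ it suffices to arrange $T^{b_i+t}y\in E$ for every $i$ --- equivalently $B+t\subset R_y:=\{n:T^ny\in E\}$ --- while keeping $B\subset A$. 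Everything thus reduces to producing a limit point $y$ of the $x_0$-orbit, a shift $t$, and an infinite $B\subset A\cap(R_y-t)$ with $T^{b_k}x_0\to y$. The shift is precisely what lets $R_y$ and $A$ be aligned: without it one is pinned to a single coset of the Kronecker factor of $(X,\mu,T)$, where the configuration genuinely fails, as the Straus / odd-numbers example shows. Accordingly $t$ should be fixed at the outset from an analysis of that factor.

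The heart of the matter is then a greedy construction of $B$ --- with $y$ emerging as its orbital limit and $R_y=\liminf_k(A-b_k)$ --- that maintains a positive-density invariant. Writing $E_k=E\cap\bigcap_{i\leq k}T^{-(t+b_i)}E$, the admissible values for $b_{k+1}$ are exactly $\{n\in\N:T^nx_0\in E_k\}$, that is, those $n\in A$ with $t+b_i+n\in A$ for all $i\leq k$; by genericity of $x_0$ this set has positive density whenever $\mu(E_k)>0$, and the construction continues so long as we can choose such a $b_{k+1}$ with $\mu(E_{k+1})=\mu\bigl(E_k\cap T^{-(t+b_{k+1})}E\bigr)>0$ again. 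The dynamical input is quantitative recurrence: for the positive-measure set $E_k$ the return set $\{m:\mu(E_k\cap T^{-m}E)>0\}$ is syndetic --- it contains the Khintchine set of $E_k$ --- and contains a Bohr set governed by the Kronecker factor. One needs this set, shifted by $-t$, to meet the orbit-return set $\{n:T^nx_0\in E_k\}$, and matching the two through their common compact factor, using the freedom in $t$, is what keeps the induction alive.

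I expect this matching/non-stalling step to be the main obstacle: guaranteeing that at each of the infinitely many stages a good $b_{k+1}$ exists, so that the measures $\mu(E_k)$ (or suitable conditional versions of them) stay bounded away from $0$ rather than decaying to it, while also arranging $b_{k+1}\in A$ rather than merely in a shift of $A$. This is where the bulk of the work in \cite{KMRR2} lies, combining the compactness reduction above with structural information about measure-preserving systems --- van der Corput / Hilbert-space estimates and a separation of compact and weakly mixing behaviour --- to obtain the uniformity that a naive intersection-of-two-large-sets argument does not supply.
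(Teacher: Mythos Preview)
This paper is a survey: \cref{thm_BBt} is only \emph{stated} here and attributed to~\cite{KMRR2}; no proof is given anywhere in the present paper, so there is nothing to compare your proposal against line by line.

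That said, your outline is broadly the right shape for the approach of~\cite{KMRR2} (and its predecessors~\cite{MRR,Host,KMRR}): Furstenberg correspondence, passage to a limit point of the orbit, and an inductive construction of $B$ via nested return sets $E_k$. You have also correctly located the crux --- keeping $\mu(E_k)$ bounded below while simultaneously forcing $b_{k+1}\in A$ --- and you are honest that this is where the real content lies. What you have written is therefore a plan rather than a proof: the ``matching through the Kronecker factor'' and the ``uniformity that a naive intersection argument does not supply'' are exactly the substantive steps, and your sketch does not carry them out. In~\cite{KMRR2} this is handled via a dynamical structure theorem (Erd\H{o}s progressions and the notion of continuous/measurable distal points) that goes well beyond a single appeal to Khintchine recurrence; in particular, the Kronecker factor alone is not enough, and one must work with higher-order pronilfactors to guarantee the induction never stalls while keeping $B\subset A$. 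So your proposal is a reasonable roadmap but stops short of the decisive ideas.
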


We note that there are examples showing that one cannot remove the restriction $b_1\neq b_2$.
One such example was communicated to us by Steven Leth.

\begin{example}
\label{eg:not2b}
For $c\in(1,2)$ consider the set
\begin{equation}
\label{eq_archemedianobstructions}
A
=
\N \cap \bigcup_{n\in\N}\left[4^{n},c\cdot4^{n}\right),
\end{equation}
which has $\ubdens(A)=1$ and $\ldens(A)>0$.
The set $A$ does not contain any set of the form $\{b, 2b\}$ for $b \in \N$.
When $b$ is much larger than $b'$ one cannot find $\{b+b',2b\}$ in $A$ either.
One can use this observation to show that $A$ does not contain an unrestricted sumset $B+B$ for an infinite set $B$.
In fact, every shift of $A$ has the same property, and so $A$ cannot contain any set of the form $B+B+t$ with $B \subset \N$ infinite and $t\geq 0$ integer.
The proofs of these statements are deferred to the more general setting of \cref{example_nextexamplebutnowitsgone}.
\end{example}

In an attempt to meet \Erdos{}'s aspirations for a density version of Hindman's theorem, we propose several conjectures that extend \cref{thm_BBt} in different directions without encroaching on the example of Straus. 
One of the most natural conjectures along these lines is the following.

\begin{conjecture}[{\cite[Conjecture~1.5]{KMRR2}}]
\label{q:bbtbbbtbbbbt}
Fix $k \in \N$.
Every set of positive upper Banach density $A\subset \N$ has a shift $A-t$ which contains the set 
\[\left\{ \sum_{n \in F}  n :   F \subset B\textup{ with } 0< |F| \leq k \right\}\]
for some integer $t\ge 0$ and some infinite set $B \subset \N$.
\end{conjecture}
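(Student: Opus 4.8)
The plan is to pass from $A$ to a measure-preserving system via a Furstenberg-type correspondence principle and then to build $B$ one element at a time, following the strategy behind the $k=2$ case (\cref{thm_BBt}) while carrying along the entire family of bounded-length sums. Fix $A$ with $\ubdens(A)>0$ and write $\sigma(F)=\sum_{n\in F}n$ for a finite set $F\subset\N$. The correspondence produces a measure-preserving system $(X,\mathcal{B},\mu,T)$ and a set $E\in\mathcal{B}$ with $\mu(E)=\ubdens(A)$, where one chooses the generic point (equivalently, the weak-$*$ limit defining $\mu$) as carefully as in \cite{KMRR2}. For a finite set $S\subset\N$ and an integer $j\ge1$, put
\[
Z^{(j)}(S)\ :=\ E\cap\bigcap\bigl\{\,T^{-\sigma(F)}E\ :\ \emptyset\ne F\subseteq S,\ |F|\le j\,\bigr\}.
\]
It then suffices to produce an infinite set $B=\{b_1<b_2<\cdots\}$ and an $\varepsilon>0$ with $\mu\bigl(Z^{(k)}(S)\bigr)\ge\varepsilon$ for every finite $S\subset B$: continuity of measure then gives $\mu\bigl(E\cap\bigcap\{T^{-\sigma(F)}E:\emptyset\ne F\subseteq B,\,|F|\le k\}\bigr)\ge\varepsilon$, and a careful extraction from the correspondence---carried out in tandem with the recursive choice of the $b_i$ below, exactly as for $k=2$---yields a shift $t\ge0$ with $t+\sigma(F)\in A$ for every admissible $F$. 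The shift cannot be dispensed with, by \cref{example_straus}.

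To build $B$ I would proceed recursively, with $Z^{(k)}(\{b_1,\dots,b_n\})$ as the running reservoir. The key structural observation is the telescoping identity
\[
Z^{(k)}(S\cup\{b\})\ =\ Z^{(k)}(S)\,\cap\,T^{-b}Z^{(k-1)}(S)\qquad(b\notin S),
\]
together with the inclusion $Z^{(k)}(S)\subseteq Z^{(k-1)}(S)$. So extending the configuration by one element amounts to choosing $b_{n+1}>b_n$ for which $\mu\bigl(Z^{(k)}(\{b_1,\dots,b_n\})\cap T^{-b_{n+1}}Z^{(k-1)}(\{b_1,\dots,b_n\})\bigr)$ is bounded below, and \emph{positivity} is already automatic, since this quantity dominates $\mu\bigl(Z^{(k)}(\{b_1,\dots,b_n\})\cap T^{-b_{n+1}}Z^{(k)}(\{b_1,\dots,b_n\})\bigr)$, which is positive for some $b_{n+1}$ by \Poincare{} recurrence. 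In this sense the recursion ``closes up'' for every $k$, and the entire difficulty lies in not leaking measure.

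The main obstacle is precisely this: over infinitely many steps one must keep $\mu\bigl(Z^{(k)}(\{b_1,\dots,b_n\})\bigr)\ge\varepsilon$ for a fixed $\varepsilon>0$, whereas a single application of \Poincare{} recurrence---or even of the mean ergodic theorem, which only guarantees $\mu(Y\cap T^{-b}Y)\ge\mu(Y)^2$ for suitable $b$---surrenders a definite fraction of the measure at each stage. When $k=2$ the reservoir $Z^{(1)}(S)$ is an intersection of single shifts $T^{-b_i}E$, so the measure lost in each extension step is governed by \emph{single} recurrence and the almost-periodic (Kronecker) part of $(X,\mu,T)$; one overcomes this, in the proof of \cref{thm_BBt}, by a limiting procedure along $(b_n)$ that, exploiting the special structure of $\mu$, forfeits no measure in the limit. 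For $k\ge3$, however, $Z^{(k-1)}(S)$ already contains the higher sums $T^{-(b_i+b_j)}E$ and beyond, so keeping the measure from collapsing becomes a question about the abundance of good shifts for configurations of genuinely higher complexity---at the level of \Szemeredi{}-type (multiple) recurrence rather than single recurrence---and this must hold simultaneously for $Z^{(1)},\dots,Z^{(k)}$, whose requirements on $b_{n+1}$ constrain one another. The single-recurrence mechanism that works for $k=2$ does not obviously survive. A natural line of attack is to set up an IP-limit argument in the spirit of the Furstenberg--Katznelson IP-\Szemeredi{} theorem \cite{Furstenberg-Katznelson}, producing an infinite $B$ along which every bounded-height sum $\sigma(F)$ is a simultaneous return time of $E$ with a uniform lower bound on the measure; the ingredient still missing is the corresponding IP-multiple-recurrence statement, reconciled with the density-realizing choice of $\mu$, and to our knowledge this remains open. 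Whatever the route, it must also respect \cref{eg:not2b} in addition to \cref{example_straus}: the shift $t$ is essential, and it cannot be taken uniformly over all $A$.
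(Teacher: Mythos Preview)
The statement you are addressing is \cref{q:bbtbbbtbbbbt}, which is a \emph{conjecture} in the paper, not a theorem. The paper does not prove it; it explicitly states that only the case $k=2$ is known (via \cite[Theorem~1.2, part~(ii)]{KMRR2}), and that for $k\ge 3$ even the weaker \cref{conj:bbbt} remains open. There is therefore no proof in the paper against which to compare your proposal.

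Your proposal is not a proof either, and you say as much: after setting up the recursive scheme and the telescoping identity $Z^{(k)}(S\cup\{b\})=Z^{(k)}(S)\cap T^{-b}Z^{(k-1)}(S)$, you correctly observe that for $k\ge 3$ the reservoir $Z^{(k-1)}(S)$ already involves double and higher sums, so maintaining a uniform lower bound on $\mu(Z^{(k)}(\{b_1,\dots,b_n\}))$ is governed by genuine multiple recurrence rather than the single-recurrence and Kronecker-factor machinery that drives the $k=2$ case. You then state that the needed IP-multiple-recurrence ingredient, reconciled with the specific choice of $\mu$ coming from the correspondence, ``remains open.'' That is the gap, and it is real: your recursion does close up for positivity at each step via \Poincare{} recurrence, but nothing you have written prevents the measure from decaying to zero, and supplying such a non-degeneracy bound is precisely the content of the conjecture. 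What you have produced is a plausible strategic outline together with an honest identification of the missing piece, which is in line with the paper's position that the problem is open; it is not a proof.
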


We stress that one cannot refine \cref{q:bbtbbbtbbbbt} by choosing $t$ independent of $k$ because the example of Straus does not contain such configurations.

The case $k=2$ of \cref{q:bbtbbbtbbbbt}, which corresponds to a slight variation of \cref{thm_BBt}, is proved in~\cite[Theorem 1.2, part (ii)]{KMRR2}.
For $k\geq 3$ even the following special case of \cref{q:bbtbbbtbbbbt} remains open. 
\begin{conjecture}
\label{conj:bbbt}
Fix $k \in \N$.
Every set of positive upper Banach density $A\subset N$ has a shift $A-t$ which contains a sumset
\begin{equation}
\label{eqn:BBBBBB}
B^{\oplus k}:=\left\{\sum_{n\in F}n:F\subset B\text{ with }|F|=k\right\}
\end{equation}
for some integer $t \ge 0$ and some infinite set $B \subset \N$.
\end{conjecture}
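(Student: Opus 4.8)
Since \cref{conj:bbbt} is open, I describe how I would attack it, using the case $k=2$ of \cref{q:bbtbbbtbbbbt} from~\cite{KMRR2} as the base of an induction on $k$. After a Furstenberg-type correspondence one may replace $A$ by an invertible measure-preserving system $(X,\mu,T)$, a point $x_0\in\supp(\mu)$ and a clopen set $E$ with $\mu(E)=\ubdens(A)>0$ and $\one_A(n)=\one_E(T^nx_0)$ for all $n$; it then suffices to produce an integer $t\ge0$ and an infinite set $B=\{b_1<b_2<\cdots\}$ with $t+s\in A$ for every sum $s$ of exactly $k$ distinct elements of $B$. I would build $B$ greedily, maintaining the invariant that after $b_1,\dots,b_m$ have been chosen there is a set $D_m\subseteq\N$ of positive upper density with $D_m+\sigma\subseteq A-t$ for every partial sum $\sigma$ of at most $k-1$ of the $b_i$ (so in particular $D_m\subseteq A-t$). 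If one can always select $b_{m+1}\in D_m$ with $b_{m+1}>b_m$ so that $D_{m+1}:=D_m\cap(D_m-b_{m+1})$ still has positive upper density, then the invariant persists, since a partial sum of at most $k-1$ of $b_1,\dots,b_{m+1}$ either avoids $b_{m+1}$ (handled by $D_{m+1}\subseteq D_m$) or equals $b_{m+1}+\sigma$ for $\sigma$ a partial sum of at most $k-2$ of the earlier $b_i$, in which case $D_{m+1}+b_{m+1}+\sigma\subseteq D_m+\sigma\subseteq A-t$; and each new size-$k$ sum $b_{m+1}+\sigma$, with $\sigma$ a partial sum of exactly $k-1$ of $b_1,\dots,b_m$, already lies in $A-t$ because $b_{m+1}\in D_m$ and $D_m+\sigma\subseteq A-t$. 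Taking $B=\{b_m:m\ge1\}$ then finishes the construction; the shift $t$, which by \cref{example_straus} must be allowed to depend on $k$, is fixed in advance and its existence is part of what the structure theory below supplies.

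The difficulty lies entirely in sustaining the invariant, that is, in choosing $b_{m+1}$ so that $D_{m+1}$ remains of positive upper density. A naive appeal to Khintchine-type recurrence is useless here, because it only yields $\udens(D_{m+1})\gtrsim\udens(D_m)^2$, and iterating this drives the density to $0$. As in the $k=2$ case the remedy is to work modulo the structured part of $(X,\mu,T)$: decompose $\one_A=g+h$ with $g$ almost periodic — hence a genuine function on the Kronecker group — and $h$ orthogonal to the Kronecker factor. One then chooses all the $b_i$ so that their orbit points $b_i\alpha$ in the Kronecker group cluster in an ever-shrinking Bohr neighborhood of a single point $\beta$ with $k\beta$ in the region where the contribution of $g$ is bounded below; additivity then forces every $k$-fold sum $b_{i_1}+\cdots+b_{i_k}$ to have its orbit point near $k\beta$, so $g$ stays large along the configuration, while a linear Fourier ($U^2$) estimate kills the contribution of $h$ provided the $b_i$ are chosen ``generically'' inside the Bohr set. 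Crucially, intersecting $D_m$ with the Bohr set costs only a bounded factor in density rather than squaring it, which is what prevents the recursion from collapsing.

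The main obstacle, and the reason \cref{conj:bbbt} is open for $k\ge3$, is making this scheme survive $k-1$ nested levels at once. Even with the Kronecker factor as the structured part, at stage $m$ the single new element $b_{m+1}$ must simultaneously respect the $\binom{m}{k-1}$ constraints coming from all $(k-1)$-element partial sums $\sigma$ of $b_1,\dots,b_m$, which forces one to control the density of $D_m$ not merely on $\N$ but relative to the Bohr neighborhood of $\beta$ and uniformly over all of these translated constraints, all the while honoring the exact-size requirement $|F|=k$ so that genuine $k$-fold sums stay separated from lower-order contributions with repeated indices. For $k=2$ there is one such constraint and one level of recursion, and the bookkeeping reduces to what is carried out in~\cite{KMRR2}; for $k\ge3$ the constraints from successive levels interact, and a quantitative ``positive density relative to the structured factor'' hypothesis that is both provable at the base case and stable under adding one more level has not been found. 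Propagating such a hypothesis through all $k-1$ levels of the greedy construction is, I expect, where the real work lies.
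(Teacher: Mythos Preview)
\cref{conj:bbbt} is stated as open in the paper, so there is no proof to compare against. Your outline correctly isolates the greedy construction and the recursion on $D_m$, and your diagnosis that naive Khintchine recurrence squares the density is accurate. The gap is in the structure theory you propose.

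You decompose $\one_A=g+h$ with $g$ on the Kronecker factor and claim that a linear ($U^2$) estimate disposes of $h$. The paper gives direct evidence against this for $k\ge3$. In the paragraph following \cref{prop_U2_implies_no_shift_needed} the authors state that they do \emph{not} expect that argument to produce $B^{\oplus3}\subset A$ even when $A$ is $\un{2}{\Phi}$-uniform --- precisely the situation in which $g$ is constant and your whole scheme reduces to ``$U^2$ kills $h$''. Moreover, your greedy invariant ($D_m\subset A-t$ and $D_m+\sigma\subset A-t$ for every partial sum $\sigma$ of size at most $k-1$) actually forces all of $B,\,B^{\oplus2},\,\dots,\,B^{\oplus k}$ into $A-t$, so you are in effect attacking the stronger \cref{q:bbtbbbtbbbbt}; \cref{example_nilbohrset} shows that this combined configuration is governed by genuinely degree-$k$ nilpotent obstructions that are invisible to the Kronecker factor. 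Finally, even the weaker \cref{thm_BCD} (distinct summand sets $B_1,\dots,B_k$) already required in~\cite{KMRR} the full tower of pronilfactors rather than the Kronecker factor alone. The obstacle for $k\ge3$ is therefore not the bookkeeping of nested constraints you describe in your last paragraph, but that the structured factor must be upgraded from Kronecker to the $(k{-}1)$-step pronilfactor, and an analogue of ``clustering the $b_i$ in a Bohr neighbourhood of $\beta$'' at that nilpotent level is precisely what is currently missing.
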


Throughout the rest of the paper, we  continue using the notation $B^{\oplus k}$ introduced in \cref{conj:bbbt}, and for the case $k=2$, where there is no ambiguity, we use the more explicit notation $B \oplus B$ instead of  $B^{\oplus 2}$.

The odd integers make it clear that some shift $t$ is needed in \cref{conj:bbbt}. 
However, it is possible to restrict the value of the shift to the set $\{0,\dots, k-1\}$ by analysing which congruence class modulo $k$ is preferred by the set $A$. In particular, if the intersection $A\cap k\Z$ has positive density, then we can take $t=0$.  We summarize this in the next proposition. 

\begin{proposition}[See also {\cite[Theorem 11.8]{Hindman79}}]
\label{prop:equivalences-for-shifts}
The following are equivalent.
\begin{enumerate}
\item
\cref{conj:bbbt}.
\item
For every $k\in\N$ and every $A\subset\N$ with $\ubdens(A)>0$ there exist an infinite set $B\subset\N$ and integer $t\in\{0,\dots,k-1\}$ with $B^{\oplus k}+t\subset A$.
\item
For every $k\in\N$ and every $A\subset \N$ with $\ubdens(A\cap k\N)>0$ there exists an infinite set $B\subset \N$ with $B^{\oplus k}\subset A$.
\end{enumerate}
\end{proposition}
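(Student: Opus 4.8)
The plan is to establish $(2)\Rightarrow(1)$, $(1)\Rightarrow(2)$, $(2)\Rightarrow(3)$ and $(3)\Rightarrow(2)$, each by elementary manipulations with shifts and residue classes modulo $k$; we may assume $k\ge 2$, since for $k=1$ each of the three statements merely asserts that a set of positive upper Banach density contains an infinite subset. The implication $(2)\Rightarrow(1)$ is immediate, as $B^{\oplus k}+t\subset A$ is exactly $B^{\oplus k}\subset A-t$ and $\{0,\dots,k-1\}$ consists of non-negative integers. For $(1)\Rightarrow(2)$ the only point is to compress the shift modulo $k$: given $B^{\oplus k}+t\subset A$, write $t=qk+r$ with $q\ge 0$ and $0\le r<k$, and replace $B$ by the infinite set $B+q=\{b+q:b\in B\}\subset\N$; since every $k$-fold sum of distinct elements of $B+q$ exceeds the corresponding sum in $B$ by precisely $qk$, this gives $(B+q)^{\oplus k}+r\subset A$ with $r\in\{0,\dots,k-1\}$.

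For $(2)\Rightarrow(3)$, apply $(2)$ to the set $A\cap k\N$, which has positive upper Banach density by hypothesis, to obtain an infinite $B\subset\N$ and $t\in\{0,\dots,k-1\}$ with $B^{\oplus k}+t\subset A\cap k\N\subset k\N$; it then suffices to show $t=0$. The idea is to compare two $k$-element subsets of $B$ differing in a single element: for distinct $b,b'\in B$, choose distinct $b_2,\dots,b_k\in B\setminus\{b,b'\}$ (possible since $B$ is infinite) and reduce
\[
b+b_2+\dots+b_k+t\equiv b'+b_2+\dots+b_k+t\pmod k
\]
to obtain $b\equiv b'\pmod k$. Hence $B$ lies in a single residue class $c$ modulo $k$, every element of $B^{\oplus k}$ is $\equiv kc\equiv 0\pmod k$, and since $B^{\oplus k}+t\subset k\N$ we get $t\equiv 0\pmod k$, i.e.\ $t=0$; thus $B^{\oplus k}\subset A$. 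For $(3)\Rightarrow(2)$, decompose $A=\bigsqcup_{r=0}^{k-1}A_r$ with $A_r=\{n\in A:n\equiv r\pmod k\}$; by subadditivity of upper Banach density some $A_r$ has positive upper Banach density. If $r=0$ then $A\cap k\N=A_0$ has positive density and $(3)$ gives the conclusion with shift $t=0$. If $r\ge 1$, the translate $\tilde A=(A_r-r)\cap\N$ is contained in $k\N$ and has the same, positive, upper Banach density as $A_r$, so $\tilde A\cap k\N=\tilde A$ has positive density; $(3)$ then produces an infinite $B$ with $B^{\oplus k}\subset\tilde A$, whence $B^{\oplus k}+r\subset A$ with $r\in\{0,\dots,k-1\}$.

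The one step needing a moment's care is the deduction $t=0$ in $(2)\Rightarrow(3)$. The tempting shortcut — that $B^{\oplus k}\subset k\N$ forces $B\subset k\N$, which would make $(3)$ equivalent to the shift-free strengthening of \cref{conj:bbbt} — is false: the odd numbers have all pairwise sums even, so $B$ need not lie in $k\N$, and one must instead use the weaker (and correct) fact that $B$ occupies a single residue class modulo $k$, which is already enough to kill the shift. Beyond this, the remaining points are routine: that translating a set changes neither its upper Banach density nor (after intersecting with $\N$) anything relevant, and that all the sets $B$ produced along the way are genuinely infinite subsets of $\N$.
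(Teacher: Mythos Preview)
Your proof is correct and follows essentially the same elementary residue-class approach as the paper. The only differences are organizational: the paper proves the cycle $(2)\Rightarrow(1)\Rightarrow(3)\Rightarrow(2)$ (so three implications rather than your four), and in the step corresponding to your $(2)\Rightarrow(3)$ it uses pigeonhole to find $k$ elements of $B$ in a common residue class rather than your swapping argument showing all of $B$ lies in one class---either observation forces $k\mid t$, and your version is in fact slightly sharper.
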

\begin{proof}
Clearly the second statement implies the first.

To see how the third statement implies the second, let $A\subset\N$ with $\ubdens(A)>0$ and find $t\in\{0,\dots,k-1\}$ such that $A':=A\cap (k\N+t)$ has $\ubdens(A')>0$.
Then $A'-t\subset k\N$ and by the third statement there exists an infinite set $B\subset\N$ with $B^{\oplus k}\subset A'-t$.

To show that the first statement implies the third, let $A\subset k\N$ with $\ubdens(A)>0$ and, using the first statement, find $t\geq0$ and an infinite set $B\subset\N$ such that $B^{\oplus k}+t\subset A\subset k\N$.
Now $B$ must contain $k$ elements which are all congruent modulo $k$, and therefore their sum is a multiple of $k$.
It follows that $t=k\tilde{t}$ for some $\tilde{t}\in\Z$.
Letting $\tilde{B}=B+\tilde{t}$ we conclude that $\tilde{B}^{\oplus k}\subset A$.
\end{proof}
We continue the discussion on the role played by the shift  $t$ in \cref{q:bbtbbbtbbbbt} and related contexts in \cref{sec_uniformity_norms}.

\subsection{Sums of distinct infinite sets}

One can weaken \cref{conj:bbbt} by replacing~\eqref{eqn:BBBBBB} with the sum
\[
B_1+\cdots+B_k := \bigl\{ b_1+\dots + b_k:b_1\in B_1,\dots, b_k\in B_k \}
\]
of $k$ infinite sets $B_1,\dots,B_k \subset \N$. 
It is then unnecessary to require a shift $t$ as it can be absorbed into one of the infinite sets. The $k=2$ case of this weakening was established in 2019.

\begin{theorem}[\cite{MRR}]
\label{thm:MRR}
If $A\subset\N$ has positive upper Banach density, there exist infinite sets $B, C\subset\N$ such that $B+C \subset A$.
\end{theorem}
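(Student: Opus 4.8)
The plan is to prove \cref{thm:MRR} by leveraging the correspondence principle to move from the combinatorial statement about $A \subset \N$ to a statement about a measure-preserving system, and then to exploit a structural dichotomy for the relevant averages. First I would apply Furstenberg's correspondence principle: given $A$ with $\ubdens(A) > 0$, along a suitable \Folner{} sequence realizing the density one obtains an ergodic-theoretic model, namely a probability space $(X, \mathcal{B}, \mu)$, a measure-preserving transformation $T$, and a set $E \in \mathcal{B}$ with $\mu(E) = \ubdens(A) > 0$, such that for any finite set of integers $n_1, \dots, n_k$, if $\mu(T^{-n_1}E \cap \dots \cap T^{-n_k}E) > 0$ then $A$ contains some shift of $\{n_1, \dots, n_k\}$; more precisely one arranges that whenever the intersection is positive, $A - m$ meets $\{n_1,\dots,n_k\}$-patterns for a positive-density set of $m$. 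The goal becomes constructing two infinite sequences $b_1 < b_2 < \cdots$ and $c_1 < c_2 < \cdots$ such that every sum $b_i + c_j$ lies in $A$, which corresponds to building a decreasing sequence of positive-measure sets witnessing the combined return-time conditions.

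The main structural input is a dichotomy for averages of the form $\frac{1}{N}\sum_{n=1}^N \mu(T^{-n}E \cap (\text{something}))$ coming from the ergodic decomposition into a Kronecker (compact, almost periodic) part and a weakly mixing part. The key step is to analyze the function $n \mapsto \mu(E \cap T^{-n}E)$ and, more importantly, to track how the conditional measure behaves relative to the Kronecker factor. The heart of the argument is an inductive construction: having chosen $b_1, \dots, b_k$ and $c_1, \dots, c_k$ together with a positive-measure set $E_k \subset E$ all of whose points return to $E$ under $T^{b_i + c_j}$ for $i, j \le k$, I would use a pigeonhole/compactness argument on the Kronecker factor — the orbit $\{T^n : n \in \N\}$ acting on the Kronecker factor is (uniformly) almost periodic, so one can find $n$ in a syndetic set making $T^n$ close to the identity on the Kronecker factor, hence $\mu(E_k \cap T^{-n}E_k)$ bounded below — to extend the construction by one more step, choosing $b_{k+1}$ and $c_{k+1}$ appropriately and shrinking $E_k$ to $E_{k+1}$. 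One must also ensure the $b_i$ and $c_j$ tend to infinity, which is handled because the syndetic sets supplying good return times are infinite.

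The hard part will be making the two-sided construction close properly: when one adds $c_{k+1}$, one needs $b_i + c_{k+1} \in A$ for \emph{all} previously chosen $b_i$ simultaneously, and likewise upon adding $b_{k+1}$ one needs $b_{k+1} + c_j \in A$ for all $j \le k+1$, so the conditions accumulate and the set $E_k$ shrinks at each stage; keeping $\mu(E_k)$ bounded away from zero (or at least summing the losses so that $\bigcap_k E_k$ is nonempty, or passing to a limit along an ultrafilter) is the delicate point. The clean way to organize this is to pass to the Kronecker factor and work with the correlation $\phi(n) = \int \mathbf{1}_E \cdot T^n \mathbf{1}_E \, d\mu$ or its projection: one first chooses the $c_j$ so that the averages $\frac{1}{N}\sum_n \mathbf{1}_E(x) \mathbf{1}_E(T^{n+c_j}x)$ behave coherently, then selects $b_i$ from a set of $n$ for which $T^n$ is almost a period of a fixed positive-measure set in the Kronecker factor; almost-periodicity guarantees a single positive lower bound valid at every stage, which is precisely what prevents the measures from collapsing. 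Finally I would translate the resulting ergodic statement — existence of infinite $B', C'$ with $b + c$ a return time for all $b \in B'$, $c \in C'$ — back through the correspondence principle, absorbing any fixed shift into one of the sets, to conclude that $A$ itself contains $B + C$ for infinite $B, C \subset \N$.
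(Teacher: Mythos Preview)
The paper does not itself prove \cref{thm:MRR}; it is quoted from~\cite{MRR}, with the remark that Host~\cite{Host} later gave an ergodic-theoretic proof and that~\cite{KMRR} gave yet another. Your sketch is closest in spirit to Host's argument rather than to the original proof in~\cite{MRR}, which is phrased combinatorially; the combinatorial core is visible in the paper's proof of \cref{thm_sumset_szemeredi}, where the input from~\cite[Section~2]{MRR} is an infinite set $L\subset\N$ and a fixed $\varepsilon>0$ such that $\bigcap_{x\in F}(A-x)$ has $\Phi$-density larger than $\varepsilon$ for every finite $F\subset L$, after which Bergelson's intersectivity lemma finishes the job.

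Your outline names the right ingredients but the clause ``almost-periodicity guarantees a single positive lower bound valid at every stage'' is precisely the step that carries the whole proof, and as written it is not justified. Knowing that $T^{b}$ is close to the identity on the Kronecker factor $Z$ controls $\int \E(\one_E\mid Z)\cdot R^{b}\E(\one_E\mid Z)\,d\nu$, not $\mu(E_k\cap T^{-b}E_k)$ for the shrinking sets $E_k$ produced by your alternating scheme: the component of $\one_{E_k}$ orthogonal to $Z$ contributes an error that need not be small for any individual $b$, and $\mu(E_k)$ may already be tiny. In the actual arguments one does not alternate symmetrically. One first produces the entire infinite set (your $B$, the set $L$ in the combinatorial version) together with a single $\varepsilon>0$ such that every finite sub-intersection $\bigcap_{b\in F}T^{-b}E$ has measure at least $\varepsilon$; this is where the Kronecker structure, or in~\cite{MRR} a compactness argument plus Bergelson's intersectivity lemma, is invoked once and for all. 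Only afterwards are the elements of $C$ selected, and they are selected as visiting times of the specific generic point $a$ satisfying $\one_A(n)=\one_E(T^na)$ to these positive-measure intersections. That last point is also missing from your translation step: the purely measure-theoretic form of the correspondence principle gives only conclusions about finite patterns, so to produce an infinite $C\subset\N$ landing in $A$ one must set up the correspondence in its pointwise form from the outset.
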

In~\cite{Host}, Host gave an ergodic theoretic proof of \cref{thm:MRR}, finding the proper ergodic analog for some of the ideas in~\cite{MRR}.
The techniques used in both~\cite{MRR} and~\cite{Host} seem to be unable to handle higher order sumsets.
In~\cite{KMRR} a different ergodic proof of \cref{thm:MRR} was discovered, building on the ideas of~\cite{Host}, which works for higher-order sumsets.

\begin{theorem}[{\cite[Theorem 1.1]{KMRR}}]
\label{thm_BCD}
For every set  $A\subset\N$ of positive upper Banach density and integer $k\in\N$, there exist infinite sets $B_1, \dots, B_k\subset\N$ such that $B_1+\cdots+B_k \subset A$.
\end{theorem}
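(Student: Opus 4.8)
The plan is to establish the dynamical incarnation of the statement and transfer it back through the Furstenberg correspondence principle, following (and extending) Host's ergodic proof of the $k=2$ case. First, fix $A\subset\N$ with $\ubdens(A)=a>0$ and attach to it a measure-preserving system $(X,\mathcal{B},\mu,T)$ and a set $E\in\mathcal{B}$ with $\mu(E)=a$, set up so that for every finite collection $n_1,\dots,n_r\in\N$ one has $\ubdens\big(\bigcap_i (A-n_i)\big)\ge\mu\big(\bigcap_i T^{-n_i}E\big)$; thus a positive-measure intersection of translates of $E$ forces a nonempty --- indeed positive-density --- intersection of the corresponding shifts of $A$. After passing to a suitable ergodic component we may assume the system ergodic. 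The reduced goal is then: find infinite sets $B_1,\dots,B_k\subset\N$ and a point $x\in X$ (in fact a positive-measure set of such $x$) with $T^{b_1+\cdots+b_k}x\in E$ for all $(b_1,\dots,b_k)\in B_1\times\cdots\times B_k$. Applying the correspondence to finite subsets $F_i\subset B_i$ and then diagonalizing assembles the infinite sets and yields $B_1+\cdots+B_k\subset A$.

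I would prove the dynamical statement by induction on $k$: the case $k=1$ is trivial (take $B_1$ to be the set of return times of a generic point to $E$), and $k=2$ is \cref{thm:MRR}, for which one uses Host's ergodic proof. For the inductive step the naive move --- pick an element of $B_k$, pass to $E\cap T^{-b}E$, recurse --- fails because $E\cap\bigcap_{b\in B_k}T^{-b}E$ is typically null. Instead one builds all of $B_1,\dots,B_k$ simultaneously by a greedy recursion that tracks not a single intersection but an $L^2$ quantity: with finite initial segments already chosen, one considers $\mathbb{E}\big[\prod_s\one_{T^{-s}E}\,\big|\,\mathcal{Z}\big]$, where $s$ runs over the relevant partial sums of the chosen elements and $\mathcal{Z}$ is an appropriate characteristic factor for these linear averages --- for the patterns in play this can be taken to be a structured factor, ultimately a compact group rotation (Kronecker) factor. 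The mean ergodic theorem, in the form of a van der Corput / Bohr-recurrence estimate on $\mathcal{Z}$, then lets one select the next element of the relevant $B_i$ so that this quantity drops by less than a preassigned $\epsilon_j$ with $\sum_j\epsilon_j$ small. Since $\mathcal{Z}$ is a compact rotation factor, the partial sums of each $B_i$ can be steered into arbitrarily small Bohr neighborhoods of the identity, so there is always room to continue; the $\FS$-structure of those partial sums is precisely what makes the recursion run forever without exhausting the positive-measure budget.

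The main obstacle, as always with infinitary patterns, is the passage from finite to infinite: after the infinitely many greedy choices one must still exhibit a point (a positive-measure set, really) valid simultaneously for \emph{every} tuple. Two intertwined points make this delicate. First, one must never require $\bigcap_{\text{all partial sums }s}T^{-s}E$ to be non-null --- here the correspondence principle earns its keep, since only finite sub-collections need to intersect, and a diagonal argument does the rest. Second, one must correctly locate the characteristic factor $\mathcal{Z}$ and prove the sharp uniformity/recurrence estimate on it, then control the accumulated error over infinitely many stages --- and closing the $\epsilon_j$-budget uniformly over the unboundedly many new partial sums created at each stage is the technical heart of the argument. By comparison the remaining ingredients --- the base cases, the verification that the correspondence transfers the conclusion, and the final diagonalization --- are routine.
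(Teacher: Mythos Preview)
This theorem is not proved in the present paper --- it is quoted from \cite{KMRR} --- so there is no in-paper proof to compare against directly. That said, the paper records one crucial fact about the argument: immediately before stating the theorem it remarks that ``the techniques used in both \cite{MRR} and \cite{Host} seem to be unable to handle higher order sumsets'' and that a \emph{different} ergodic proof, ``building on the ideas of \cite{Host},'' was required. Your proposal is precisely to extend Host's approach, so you are heading straight into the obstacle the authors flag.

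The concrete gap is your assertion that the characteristic factor $\mathcal{Z}$ ``can be taken to be \dots\ ultimately a compact group rotation (Kronecker) factor.'' For $k=2$ this is Host's insight and it works. For $k\ge 3$ it does not: when you steer $B_1,\dots,B_k$ simultaneously so that all $k$-fold sums land in $E$, the limiting object that governs the construction lives not in the Kronecker factor but in a higher-step pronilfactor, and controlling it requires the structure theory of nilsystems rather than Bohr recurrence on a compact abelian group. Concretely, your ``$\epsilon_j$-budget'' cannot be closed using Kronecker alone once $k\ge 3$: adding a new element at stage $m$ creates on the order of $m^{k-1}$ new partial sums, and keeping the loss per sum small enough forces you to align the new element with the existing ones in a $(k-1)$-step nil-object, not merely in a torus. (Compare \cref{example_nilbohrset} in this paper, which illustrates why $k$-fold sumset phenomena are governed by $\un{k}{\Phi}$ rather than $\un{2}{\Phi}$.) The proof in \cite{KMRR} introduces \Erdos{} progressions and analyzes their projections to the pronilfactors to carry this out; your sketch lacks the analogous mechanism, and ``van der Corput / Bohr-recurrence on $\mathcal{Z}$'' is not enough to supply it.
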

The conclusion of \cref{thm_BCD} follows from \cref{conj:bbbt} by splitting the set $B$ into $k$ disjoint infinite subsets.
This motivates several variations of the conjectures and questions posed in Section~\ref{sec:density-Hindman}, 
starting with the following.

\begin{conjecture}
\label{conj_B1uptoBk}
Given $A\subset\N$ with positive upper Banach density and $k\in\N$ there are infinite sets $B_1,\dots,B_k\subset\N$ such that $B_1+\cdots+B_i\subset A$ for all  $i=1,\dots,k$.
\end{conjecture}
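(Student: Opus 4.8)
The plan is to attack this conjecture with the ergodic-theoretic machinery behind \cref{thm_BCD} (the argument of \cite{KMRR}, refining \cite{Host,MRR}), but reorganised so that the sequences $B_1,\dots,B_k$ are produced in a single interleaved induction in which \emph{every} partial-sum constraint $B_1+\dots+B_i\subseteq A$, $1\le i\le k$, is maintained at once. By the Furstenberg correspondence principle, first replace $A$ by an invertible measure-preserving system $(X,\mathcal B,\mu,T)$, a set $E\in\mathcal B$ with $\mu(E)=\ubdens(A)>0$, a \Folner{} sequence $\Phi$ realising the density of $A$, and a point $x_0$ generic along $\Phi$ with $A=\{n\in\N:T^nx_0\in E\}$. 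It then suffices to build infinite $B_1,\dots,B_k\subset\N$ such that $T^{\,b_1+\dots+b_i}x_0\in E$ for all $i\le k$ and all $b_j\in B_j$ with $j\le i$; in practice one strengthens the bookkeeping so that the relevant return sets stay of positive measure.

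The core technical input would be an infinitary almost-periodicity lemma in the spirit of \cite{Host,KMRR}: given a positive-measure set $E'\subseteq E$, combine the structure of the Kronecker factor (a compact group rotation, on which a prescribed finite set of partial sums can be made to act by near-identity rotations) with a van der Corput estimate controlling the part orthogonal to the Kronecker factor, to produce an infinite set $D$ and a positive-measure set $E''\subseteq E'$ such that, for every \emph{finite} $D'\subseteq D$, the set $E''\cap\bigcap_{s}T^{-s}E'$ has positive measure, where $s$ ranges over the finitely many sums of elements of $D'$ that the construction will need; one chooses the elements of $D$ to grow fast enough to absorb the van der Corput errors. The point that makes this possible --- the same phenomenon that makes \cref{thm:MRR} and \cref{thm_BCD} true even though honest IP-sets are forbidden by \cref{example_straus} --- is that at each stage of the construction only finitely many shift constraints are active, so no infinite intersection is ever required to be non-null. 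With such a lemma in hand one runs the induction: after $m$ rounds, each appending one new element to one of the $B_j$'s, maintain a positive-measure set $W_m\subseteq E$ witnessing that every currently required partial sum sends $x_0$ into $E$ and such that applying the lemma to $W_m$ leaves room to append the next element to the designated coordinate while keeping all $k$ chains of constraints satisfied; then diagonalise the scheduling so that every $B_j$ receives infinitely many elements.

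The genuine obstacle is that the constraints do not nest compatibly, so one cannot simply iterate \cref{thm_BCD}. For \cref{thm_BCD} one may build $B_k$ first so that $A':=\bigcap_{b\in B_k}(A-b)$ still has positive upper Banach density and then recurse on $A'$; the top-level sum $B_1+\dots+B_k\subseteq A$ then falls out automatically, and the fact that $B_1+\dots+B_{k-1}$ is only controlled relative to $A'$ (with $A'\not\subseteq A$) does no harm. For the present conjecture that is fatal, which is exactly why one is forced into the interleaved construction above, appending elements to all $k$ sequences in turn while maintaining reservoirs for all $k$ levels at once. Making this work requires controlling, uniformly in the level $i$, ergodic averages of the form $\int\prod_{s}T^sf\,d\mu$ with $s$ running over partial sums of the chosen elements --- in particular showing that the Kronecker factor still governs them at every level, with the accumulated van der Corput errors beaten by the rapid growth of the later elements. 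This is where real work would be needed, and it is not clear to me that the Kronecker factor alone suffices at all levels: a finer factor, or a more clever rule for deciding which coordinate to extend at each round, may be required. A subsidiary nuisance --- ensuring that the reservoir attached to each level survives the infinitely many extensions made to the other levels without its measure collapsing to zero --- is handled, as in \cite{KMRR}, by keeping every $\mu(W_m)$ bounded below by a constant depending only on $\ubdens(A)$ and $k$, rather than by naive nesting.
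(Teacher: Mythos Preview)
This statement is listed in the paper as an \emph{open conjecture}, not a theorem: the paper does not prove it. The only thing the paper says about its status is that it would follow from \cref{q:bbtbbbtbbbbt} (also open) by partitioning the single set $B$ produced there into $k$ disjoint infinite pieces $B_1',\dots,B_k'$ and absorbing the shift $t$ into the first piece. So there is no ``paper's own proof'' to compare against; your proposal is an outline of an attack on a genuinely open problem.

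As a strategy your sketch is reasonable and you have correctly located the real obstruction: the constraints $B_1+\cdots+B_i\subset A$ for $1\le i\le k$ do not nest, so one cannot simply peel off one summand at a time as in the proof of \cref{thm_BCD}. Your interleaved-induction idea, maintaining a positive-measure reservoir $W_m$ that simultaneously certifies all $k$ levels of partial sums, is the natural shape of an argument. However, your own caveat is on point and is exactly why this remains a conjecture: for the $k$-fold sumset in \cref{thm_BCD} the argument of \cite{KMRR} already needs the full Host--Kra pronilfactor machinery, not just the Kronecker factor, and here one must control, at each stage, correlations along \emph{all} partial sums $b_1+\cdots+b_i$ with $i\le k$ simultaneously. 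There is currently no structure theorem that guarantees a single reservoir surviving the interleaved extension at every level with a uniform lower bound, and your proposal does not supply one. In particular, the step ``keep $\mu(W_m)$ bounded below by a constant depending only on $\ubdens(A)$ and $k$'' is precisely the missing ingredient; in \cite{KMRR} this is achieved for a single chain of sums via Erd\H{o}s--Ko--Rado--type progressive measure structure on pronilsystems, and it is not known how to run $k$ such chains in parallel. So your proposal is a coherent plan with a clearly identified gap, and that gap is the content of the conjecture.
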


We note that \cref{conj_B1uptoBk} follows from \cref{q:bbtbbbtbbbbt}: if $B$ and $t$ satisfy the conclusion of the latter, then partitioning $B$ into $k$ disjoint infinite subsets $B=B_1'\cup\cdots\cup B_k'$ and letting $B_1=B_1'+t$ and $B_i=B_i'$ for all $i>1$ we obtain the conclusion of \cref{conj_B1uptoBk}.

One may be tempted to ask whether $A$ contains all sums
\[
B_{i(1)} + \cdots + B_{i(r)}
\]
for all $1 \le i(1) < \cdots < i(r) \le k$ 
for some $r$ with $2\leq r \leq k$.
However, there are local obstructions to this and in particular the set $A=2\N-1$ does not contain such a configuration.

Another possibility is to strengthen \cref{conj_B1uptoBk} by essentially making $k$ infinite.
\begin{conjecture}
    \label{question_infiniteBi}
Given $A\subset\N$ with positive upper Banach density, there is an infinite sequence of infinite sets $B_1,B_2, \ldots \subset\N$ such that $B_1+\cdots+B_i\subset A$ for every $i\in\N$.
\end{conjecture}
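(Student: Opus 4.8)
The plan is to deduce the conjecture from the ergodic-theoretic machinery developed for \cref{thm_BCD}, run in an ``unbounded'' form. First I would invoke the Furstenberg correspondence principle: fix a \Folner{} sequence $\Phi$ with $\udens_\Phi(A)=\ubdens(A)=:\delta>0$, realize $\one_A$ as a point in the subshift $X\subseteq\{0,1\}^{\Z}$ it generates, and produce a $T$-invariant probability measure $\mu$ on $X$ together with the clopen set $E=\{x\in X:x(0)=1\}$, for which $\mu(E)\geq\delta$ and for which membership statements about returns to $E$ of a suitably generic point translate back into the assertion $B_1+\cdots+B_i\subseteq A$. Under this translation the target becomes: find infinite sets $B_1,B_2,\dots$ so that for every $i$ and every choice $b_1\in B_1,\dots,b_i\in B_i$ the generic point lies in $T^{-(b_1+\cdots+b_i)}E$.

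Second --- and this is where the real work lies --- I would set up a recursive construction that builds all of the $B_i$ simultaneously, one element at a time, in a dovetailed fashion. The template is the proof of \cref{thm_BCD} in~\cite{KMRR}: there one selects the elements of a single infinite set greedily, and the invariant maintained throughout the induction is a positive-measure ``reservoir'' set (built from $E$, its partial intersections along the elements chosen so far, and conditional measures over an appropriate structured factor) which certifies that all partial sums obtained from the committed elements return to $E$ and, crucially, that the reservoir is large enough that the next element can always be chosen. I would carry a doubly-indexed version of this: at stage $n$ I commit to finite initial segments of $B_1,\dots,B_{n}$, maintain a positive-measure reservoir certifying that every partial sum $b_1+\cdots+b_i$ with $i\le n$ drawn from committed elements lies in $A$, and alternate two kinds of moves --- ``append a new element to $B_i$'' and ``create $B_{n+1}$'' --- so that each $B_i$ receives a new element at infinitely many stages and hence ends up infinite. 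The bookkeeping is a routine priority argument; the content is entirely in showing the reservoir survives each move.

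The main obstacle is exactly the one already blocking \cref{conj:bbbt} (equivalently \cref{conj_B1uptoBk}) for $k\geq 3$, together with an extra twist. The ergodic inputs of~\cite{Host,KMRR} control a single sum $b_1+\cdots+b_k$ by peeling off one variable at a time against a decreasing sequence of positive-measure sets; handling the \emph{nested} family $B_1+\cdots+B_i$ for all $i$ simultaneously, rather than only the top sum, appears to require the higher-order structure theory (characteristic nilfactors and the uniformity-norm control alluded to in \cref{sec_uniformity_norms}) that underlies \cref{q:bbtbbbtbbbbt}. The extra twist is uniformity: for the infinite dovetailing to terminate in genuinely infinite sets one needs the reservoir's measure to remain bounded away from $0$ --- or at least to decay slowly enough --- over the entire infinite construction, which demands a quantitative form of that structure theory uniform in the number of sets, and it is not clear how to obtain this with present methods. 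Finally, I would note that a soft compactness argument cannot substitute for it: the space of all sequences of subsets of $\N$ is compact and the condition $B_1+\cdots+B_i\subseteq A$ for every $i$ is closed, but ``each $B_i$ is infinite'' is not a closed condition, so a limit of finite approximations would in general consist of finite sets.
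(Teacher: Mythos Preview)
This statement is listed in the paper as an open \emph{conjecture}; the paper gives no proof. Your proposal is, correspondingly, not a proof: you sketch a line of attack (Furstenberg correspondence plus a dovetailed version of the reservoir argument from~\cite{KMRR}) and then honestly identify the obstacles that block it. That assessment is essentially correct --- the nested condition $B_1+\cdots+B_i\subset A$ for every $i$ already contains \cref{conj_B1uptoBk}, which the paper records as open for $k\ge 3$ --- so there is no proof in the paper to compare against, and your write-up is best read as a discussion of why the conjecture is hard rather than as a proof attempt.

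One minor inaccuracy: your parenthetical ``\cref{conj:bbbt} (equivalently \cref{conj_B1uptoBk})'' is not supported by the paper. In \cref{fig:diagram} both conjectures are consequences of \cref{q:bbtbbbtbbbbt} and both imply \cref{thm_BCD}, but no direct implication between them is asserted. The specific obstacle you isolate --- controlling all partial sums $B_1+\cdots+B_i$ simultaneously rather than only the top sum --- is precisely the content of \cref{conj_B1uptoBk}; the difficulty in \cref{conj:bbbt} (finding $B^{\oplus k}+t\subset A$ for a fixed $k$) is related but distinct.
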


We end this section with two related but more ambitious questions. 
While the Straus Example precludes the possibility that a single shift of an arbitrary set $A$ of positive Banach density contains an IP-set, it may be possible to have a single infinite set $B$ and different shifts depending on the number of summands.

\begin{question}\label{question_motherHindman}
Given $A\subset\N$ with positive upper Banach density, does there exist an infinite set $B \subset\N$ such that for every $k\in\N$ there exists a shift $t_k\geq0$ with
\[\left\{ \sum_{n \in F}  n :   F \subset B\textup{ with } 0< |F| \leq k \right\}\subset A-t_k. \]
\end{question}
It is not clear if a positive answer is to be expected, but we were unable to find a counterexample.
Perhaps a slightly weaker version where one is allowed to disregard finitely many elements of $B$ as the number of summands increases has higher chances of a positive answer.
\begin{question}\label{question_sameB}
Given $A\subset\N$ with positive upper Banach density, does there exist an infinite set $B \subset\N$ such that for all $k\in\N$ there is a co-finite set $B'\subset B$ and a shift $t_k\geq0$ with
\[\left\{ \sum_{n \in F}  n :   F \subset B'\textup{ with } 0< |F| \leq k \right\}\subset A-t_k. 
\]
\end{question}

We note that a positive answer to \cref{question_motherHindman} implies all the conjectures listed in this section. See~\cref{fig:diagram} for a schematic.

\begin{figure}[h!]
\centering
\begin{tikzcd}
&  {\cref{question_motherHindman}} \arrow[dl, Rightarrow] \arrow[d, Rightarrow] \\
{\cref{question_infiniteBi}}\arrow[dd, Rightarrow] &  {\cref{question_sameB}}\arrow[d, Rightarrow] \arrow[l, Rightarrow, crossing over]\\
& {\cref{q:bbtbbbtbbbbt}} \arrow[dl, Rightarrow] \arrow[d, Rightarrow] \\
{\cref{conj_B1uptoBk}}\arrow[d, Rightarrow] & {\cref{conj:bbbt}}\arrow[d, Rightarrow]\arrow[dl, Rightarrow]  \\
{\cref{thm_BCD}}\arrow[d, Rightarrow] & {\cref{thm_BBt}}\arrow[dl, Rightarrow]  \\
{\cref{thm:MRR}}&
\end{tikzcd}
\caption{Summary of the relations among the questions (assuming positive answers), conjectures,  and theorems in \cref{sec:sumsets_integers}.}
\label{fig:diagram}
\end{figure}
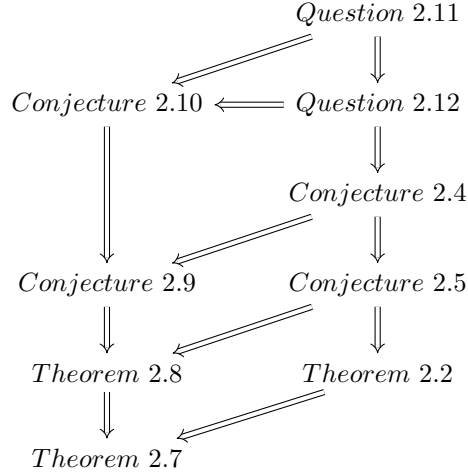

\section{Refinements of sumsets in positive density}
\label{sec:refinements}

\subsection{Constraining the summands}

\cref{thm:MRR} states that any set $A\subset\N$ with $\ubdens(A)>0$ contains a sumset $B+C$ for two infinite sets $B,C\subset\N$.
It is natural to ask whether one can impose certain restrictions on the sets $B$ and $C$.
We first show as a straightforward consequence of \cref{thm:MRR} that, up to a translate, one can draw $B$ and $C$ from any subsets of the integers with bounded gaps. 
Subsets of $\N$ with bounded gaps are called \define{syndetic}. 
Equivalently, a set $S \subset \N$ is syndetic if there exists $h\in\N$ such that $(S-1) \cup (S-2) \cup \cdots \cup (S-h) = \N$.

\begin{proposition}
\label{prop_syndeticB+C}
Fix $A \subset \N$ with $\ubdens(A) > 0$.
For any syndetic subsets $P$ and $Q$ of $\N$ there is $P' \subset P$ infinite and $Q' \subset Q$ infinite and $t \ge 0$ with $P' + Q' + t \subset A$.
\end{proposition}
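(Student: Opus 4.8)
The plan is to deduce this directly from \cref{thm:MRR} by a short two-step pigeonhole argument; the only point requiring care is to make the compensating shift nonnegative, and that is where syndeticity (rather than mere infinitude) of $P$ and $Q$ is used. First I would apply \cref{thm:MRR} to $A$ itself, obtaining infinite sets $B_0, C_0 \subseteq \N$ with $B_0 + C_0 \subseteq A$. The goal is then to replace $B_0$ by a translate of an infinite subset that lands inside $P$, and likewise $C_0$ by a translate landing inside $Q$, all while keeping the resulting sumset inside a translate of $A$.

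Next I would exploit bounded gaps. Since $P$ is syndetic there is $h \in \N$ such that for every sufficiently large $n$ the window $\{n-h+1,\dots,n\}$ contains a point of $P$; equivalently, there is $j(n) \in \{0,1,\dots,h-1\}$ with $n - j(n) \in P$. Discarding the finitely many small elements of $B_0$ and applying pigeonhole to the function $j$ on $B_0$, I get an infinite subset $B_1 \subseteq B_0$ and a fixed value $s \in \{0,-1,\dots,-(h-1)\}$ (note $s \le 0$) with $B_1 + s \subseteq P$. Since $B_1 \subseteq B_0$ we still have $B_1 + C_0 \subseteq A$. Running the identical step with $Q$ in place of $P$ and $C_0$ in place of $B_0$ produces an infinite $C_1 \subseteq C_0$ and a value $r \le 0$ with $C_1 + r \subseteq Q$; now $B_1 + C_1 \subseteq A$.

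Finally I would set $P' := B_1 + s$, $Q' := C_1 + r$, and $t := -s - r \ge 0$. Then $P' \subseteq P$ and $Q' \subseteq Q$ are infinite, $t \ge 0$, and $P' + Q' + t = \{(b+s)+(c+r)+t : b \in B_1,\ c \in C_1\} = \{b+c : b \in B_1,\ c \in C_1\} = B_1 + C_1 \subseteq A$, which is exactly the assertion.

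There is no real obstacle beyond getting the direction of the translates right. The crucial observation is that a syndetic set contains a point immediately to the \emph{left} of every large integer, so the translations $s,r$ that push $B_0$ into $P$ and $C_0$ into $Q$ are nonpositive; consequently the sumset $P'+Q'$ lands in a \emph{left} translate $A - t$ of $A$ with $t \ge 0$, as required. Translating rightward instead would leave the sumset in $A + (\text{positive integer})$, which the statement does not permit. The only remaining bookkeeping — discarding finitely many small elements so that every translate stays inside $\N$ and the pigeonhole has a finite palette — is routine.
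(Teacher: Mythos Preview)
Your proof is correct and follows essentially the same approach as the paper's: apply \cref{thm:MRR} to get $B+C\subset A$, then use syndeticity and pigeonhole to find a single nonnegative shift carrying an infinite piece of $B$ into $P$ (and likewise $C$ into $Q$), and collect the two shifts into $t$. The paper states this more tersely---``find shifts $p,q\in\N$ such that $(P+p)\cap B$ and $(Q+q)\cap C$ are both infinite''---but your explicit bounded-gap pigeonhole is exactly the mechanism behind that line, and your care with the sign of the shift matches the paper's choice of $t=p+q\ge 0$.
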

\begin{proof}
By \cref{thm:MRR}, there exist infinite sets $B,C\subset\N$ such that $B+C\subset A$.
Since $P$ and $Q$ are syndetic, we can find shifts $p,q\in\N$ such that $(P+p)\cap B$ and $(Q+q)\cap C$ are both infinite.
Take
\begin{align*}
P' &= ((P+p)\cap B)-p=P\cap(B-p) \\
Q' &= ((Q+q)\cap C)-q=Q\cap(C-q)
\end{align*} 
and $t=p+q$.
We have that  $P'+Q'\subset B+C-t\subset A-t$.
\end{proof}

The shift $t$  in \cref{prop_syndeticB+C} is needed, as taking both $P$ and $Q$ to be the even numbers and $A$ to be the odd numbers shows.
Next we consider the situation where only $C$ is constrained; in this setup a shift is no longer needed as it can be absorbed into $B$.
As the following proposition shows, in order to be a repository for $C$ a set need not be syndetic across all of $\N$ but it must be ``visible locally along a \Folner{} sequence that sees $A$''.

\begin{proposition}
\label{prop_B+Cindensity}
Let $W\subset\N$ and let $\Phi$ be a \Folner{} sequence for which $\dens_\Phi(W)$ exists. 
The following are equivalent.
\begin{enumerate}
\item
\label{item1ofthepropositiononequivalentcontainment}
For every $A\subset\N$ with $\udens_\Phi(A)>0$,  there exist infinite sets $B\subset\N$, $C\subset W$ such that $B+C\subset A$.
\item
\label{item2ofthepropositiononequivalentcontainment}
$\dens_\Phi(W)>0$.
\end{enumerate}
\end{proposition}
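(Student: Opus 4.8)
The plan is to prove the two implications separately. The easier direction is $\neg\eqref{item2ofthepropositiononequivalentcontainment} \Rightarrow \neg\eqref{item1ofthepropositiononequivalentcontainment}$: if $\dens_\Phi(W) = 0$, I would simply take $A = W$ itself. Then $\udens_\Phi(A) = 0$, so the hypothesis of \eqref{item1ofthepropositiononequivalentcontainment} is vacuous — but that does not immediately give a counterexample. Instead, the right counterexample is to take $A$ to be a set with $\udens_\Phi(A) > 0$ that is disjoint from (or nearly disjoint from) $W$; since $\dens_\Phi(W) = 0$, the complement $\N \setminus W$ has $\dens_\Phi = 1$, so $A = \N \setminus W$ works and has full density along $\Phi$. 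Any infinite $C \subset W$ and any infinite $B$ would need $b + c \in A = \N\setminus W$ for all $b \in B$, $c \in C$; but there is no contradiction yet from density alone — the point is subtler. The cleanest fix: suppose $C \subset W$ is infinite and $B$ is infinite with $B + C \subset A$. Fix any $c_0 \in C$; then $B + c_0 \subset A$, so $B \subset A - c_0$. Now for each $b \in B$ and each $c \in C$ we need $b + c \in A$, i.e. $A - c \supset B$ for every $c \in C$. Thus $B \subset \bigcap_{c \in C}(A-c)$, an intersection of infinitely many shifts of $A$; taking $A$ with $\dens_\Phi(A) < 1$ strictly, a Borel–Cantelli / pigeonhole argument along $\Phi$ forces this intersection to have density $0$ and in fact, with $A$ chosen as a suitable "random-like" or explicitly constructed sparse-complement set, to be finite — contradicting $|B| = \infty$. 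The honest way to run this is: choose $A$ so that $W \cap (A - c)$ is finite for every $c$, e.g. arrange $W$ to be extremely sparse (its own gaps growing fast) while keeping $\N \setminus W$ of density $1$ along $\Phi$, and set $A = \N\setminus W$; then $c \in C \subset W$ and $b+c \in A$ with $b$ large forces $b + c \notin W$, which is automatic, so I instead need $W$ structured so that $W + W$ meets $W$'s complement predictably — this requires care, and this is where I expect to lean on the Straus-type construction (\cref{example_straus}) adapted to $\Phi$.

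For the main direction $\eqref{item2ofthepropositiononequivalentcontainment} \Rightarrow \eqref{item1ofthepropositiononequivalentcontainment}$, the idea is to reduce to \cref{thm:MRR} by a change of \Folner{} sequence / passage to a subsequence. Given $A$ with $\udens_\Phi(A) > 0$, pass to a subsequence $\Phi' = (\Phi_{N_j})$ along which the $\limsup$ defining $\udens_\Phi(A)$ is attained, so $\dens_{\Phi'}(A) = \udens_\Phi(A) > 0$, and along which $\dens_{\Phi'}(W) = \dens_\Phi(W) > 0$ still holds (the full density passes to any subsequence). Now I want an infinite $B$ and an infinite $C \subset W$ with $B + C \subset A$. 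The strategy is to mimic the proof of \cref{thm:MRR} but restrict one summand to $W$; concretely, one builds $B = \{b_1 < b_2 < \cdots\}$ and $C = \{c_1 < c_2 < \cdots\} \subset W$ greedily, at each stage using positive density of $A$ (along $\Phi'$) together with positive density of $W$ to find the next element. The key mechanism from \cite{MRR}/\cite{Host} is that positive density of $A$ gives, for a suitable shift or correlation, a large set of $n$ with $A - n$ of positive density; intersecting with $W$ (which has positive density along the same \Folner{} sequence) keeps the candidate set for the next $c_i$ nonempty and indeed infinite. So the proof should go through the ergodic-theoretic reformulation: $A$ of positive upper density along $\Phi'$ corresponds to a set of positive measure in a measure-preserving system, $W$ corresponds to another positive-measure set, and one runs the Host-type argument choosing the "$C$-summands" from the return-time structure of the $W$-set.

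The main obstacle I foresee is exactly this last point: the proofs of \cref{thm:MRR} and \cref{thm_BCD} produce $B$ and $C$ with no control over which set $C$ lands in, and it is not a formal matter to constrain $C$ to lie in a prescribed positive-density set $W$. I would need to re-examine the Kronecker-factor argument (or Host's ergodic argument) and check that the set from which the $c_i$ are drawn can be intersected with $W$ while preserving the inductive hypothesis — this works precisely because $W$ has positive density \emph{along the same \Folner{} sequence that witnesses the density of $A$}, which is why the hypothesis is phrased that way and why the converse direction must produce a counterexample when that fails. A cleaner alternative, if available, is to invoke a known "relative" or "$W$-constrained" version of \cref{thm:MRR}; absent that, the bulk of the work is auditing the existing proof to insert the constraint $C \subset W$, and verifying that $\dens_\Phi(W) > 0$ (as opposed to mere positive upper density) is exactly what is needed to keep the greedy selection alive at every step.
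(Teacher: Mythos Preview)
Your plan for the direction $\eqref{item2ofthepropositiononequivalentcontainment}\Rightarrow\eqref{item1ofthepropositiononequivalentcontainment}$ is essentially what the paper does: it simply asserts that a careful reading of the proofs in \cite{MRR}, \cite{Host}, or \cite{KMRR} shows that the summand $C$ can be drawn from any set $W$ with $\dens_\Phi(W)>0$. Your description of how this should go (intersecting the candidate set for the next $c_i$ with $W$ at each inductive step, which survives because $W$ has positive density along the \emph{same} \Folner{} sequence) is the right intuition.

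The genuine gap is in the other direction. You correctly observe that taking $A=\N\setminus W$ does not by itself yield a contradiction, but your proposed fixes do not work. You suggest ``arranging $W$ to be extremely sparse'' --- but $W$ is \emph{given}, not chosen; the only freedom is in constructing $A$. The vague appeals to Borel--Cantelli, to $W+W$ meeting the complement of $W$, or to a Straus-type construction do not supply the missing mechanism. What is needed is a way to make $A$ avoid $b+c$ for every $c\in W$ once $c$ is large compared to $b$, while keeping $\udens_\Phi(A)>0$.

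The paper's construction is this: since $\dens_\Phi(W)=0$, one can choose a function $f\colon\N\to\N$ growing to infinity slowly enough that the \emph{thickening}
\[
\tilde W:=\bigcup_{w\in W}[w-f(w),\,w+f(w)]
\]
still satisfies $\dens_\Phi(\tilde W)=0$. Then $A:=\N\setminus\tilde W$ has $\dens_\Phi(A)=1$. Now if $B+C\subset A$ with $C\subset W$, fix any $b\in B$: every $w\in W$ with $f(w)>b$ satisfies $b+w\in\tilde W$, hence $b+w\notin A$, so such $w$ cannot lie in $C$. Since $f(w)\to\infty$, this forces $C$ to be finite. This thickening idea is the one concrete ingredient your proposal is missing.
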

\begin{proof}
The implication
$
\eqref{item2ofthepropositiononequivalentcontainment}
\Rightarrow
\eqref{item1ofthepropositiononequivalentcontainment}
$
follows from a careful analysis of the proof in~\cite{MRR} (or the proofs in~\cite{Host} and in~\cite{KMRR}).

The proof of
$
\eqref{item1ofthepropositiononequivalentcontainment}
\Rightarrow
\eqref{item2ofthepropositiononequivalentcontainment}
$
is by construction.
If $\dens_\Phi(W)=0$ we can find a slowly growing sequence $f\colon \N\to\N$ such that  $\lim_{n\to\infty}  f(n)=\infty$ and such that the set
\[
\tilde W := \bigcup_{w\in W}[w-f(w),w+f(w)]
\]
still has $\dens_\Phi(\tilde W)=0$.
Letting $A=\N\setminus\tilde W$ we see that $\dens_\Phi(A)=1$.
Whenever $A$ contains a sumset $B+C$ and $C\subset W$, if $b\in B$ and $w\in W$ are such that $f(w)>b$, then we cannot have $w\in C$ (as $w+b\notin A$).
It follows that $C$ must be finite, finishing the proof.
\end{proof}

Among other things, \cref{prop_B+Cindensity} implies that not every set $A\subset\N$ with $\dens(A)>0$ contains a sumset $B+C\subset A$ with $B,C\subset\N$ infinite and $C$ consisting entirely of squares or of primes. 
(However, see \cref{sec:recurrence-and-sumset} for positive results pertaining to sumsets with restrictions to squares, primes, or other sparse subsets of $\N$.)

While \cref{prop_B+Cindensity} involves local restrictions with respect to a fixed \Folner{} sequence, the following two propositions consider more global assumptions on $A$ and $W$.

\begin{definition}
\label{def:thick_synd_ps}
Recall that a set $T\subset\N$ is \define{thick} if it contains arbitrarily long blocks of consecutive integers.
The properties of being thick and being syndetic are dual in the following sense: a set is thick if and only if it has nonempty intersection with every syndetic set, and vice versa.
A set $P \subset \N$ is \define{piecewise syndetic} if there is a syndetic set $S \subset \N$ and a thick set $T \subset \N$ such that $P = S \cap T$.
\end{definition}

\begin{proposition}
\label{prop:thick_C_repository}
Let $W\subset\N$. 
The following are equivalent.
\begin{enumerate}
\item
\label{item1ofthepropositiononequivalentcontainment3}
For every thick set $A\subset\N$ there exist infinite sets $B\subset\N$, $C\subset W$ such that $B+C\subset A$.
\item
\label{item1.5ofthepropositiononequivalentcontainment3}
For every piecewise syndetic set $A\subset\N$ there exist infinite sets $B\subset\N$, $C\subset W$ such that $B+C\subset A$.
\item
\label{item2ofthepropositiononequivalentcontainment3}
$W$ is syndetic.
\end{enumerate}
\end{proposition}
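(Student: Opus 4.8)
The plan is to prove the cyclic chain of implications $\eqref{item1.5ofthepropositiononequivalentcontainment3}\Rightarrow\eqref{item1ofthepropositiononequivalentcontainment3}\Rightarrow\eqref{item2ofthepropositiononequivalentcontainment3}\Rightarrow\eqref{item1.5ofthepropositiononequivalentcontainment3}$. The first of these is immediate: every thick set $T$ equals $\N\cap T$ with $\N$ syndetic, hence is piecewise syndetic, so the piecewise syndetic hypothesis in $\eqref{item1.5ofthepropositiononequivalentcontainment3}$ already covers all thick sets.

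For $\eqref{item2ofthepropositiononequivalentcontainment3}\Rightarrow\eqref{item1.5ofthepropositiononequivalentcontainment3}$, the key remark is that a piecewise syndetic set has positive upper Banach density: writing $A=S\cap T$ with $S$ having gaps bounded by some $m$ and $T$ thick, inside each of the arbitrarily long intervals furnished by $T$ the set $A$ agrees with $S$ and hence meets every block of $m+1$ consecutive integers, so $\ubdens(A)\ge 1/(m+1)>0$. Now apply \cref{prop_syndeticB+C} with the syndetic sets $P=\N$ and $Q=W$: it yields an infinite $P'\subseteq\N$, an infinite $Q'\subseteq W$, and an integer $t\ge 0$ with $P'+Q'+t\subseteq A$. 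Setting $B:=P'+t$ and $C:=Q'$ produces infinite sets $B\subseteq\N$ and $C\subseteq W$ with $B+C\subseteq A$, as required.

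It remains to prove $\eqref{item1ofthepropositiononequivalentcontainment3}\Rightarrow\eqref{item2ofthepropositiononequivalentcontainment3}$, which I would do by contraposition: if $W$ is not syndetic I will exhibit a single thick set $A$ for which no $B+C\subseteq A$ exists with $B$ infinite and $C\subseteq W$ infinite. Since $W$ is not syndetic it has gaps of unbounded length occurring arbitrarily far out (otherwise $W$ would be syndetic beyond some point, hence syndetic), so I may choose pairwise disjoint intervals $G_n=[\alpha_n,\beta_n]$ with $G_n\cap W=\emptyset$ and $|G_n|\ge 3n$. Put $J_n:=[\alpha_n+n,\beta_n-n]\subseteq G_n$, so $|J_n|\ge n\to\infty$, and let $A:=\bigcup_n J_n$; then $A$ is thick and $A\cap W=\emptyset$. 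Suppose $B+C\subseteq A$ with $B=\{b_1<b_2<\cdots\}$ and $C\subseteq W$ both infinite. For each $c\in C$ the point $b_1+c$ lies in a unique $J_m$, which forces $\alpha_m+m-b_1\le c\le\beta_m-m-b_1<\beta_m$; since $c\in W$ avoids $[\alpha_m,\beta_m]$ this forces $c<\alpha_m$, whence $m<b_1$. Thus the index $m=m(c)$ is bounded by $b_1-1$ uniformly in $c$, so $\{b_1+c:c\in C\}\subseteq\bigcup_{m<b_1}J_m$ is finite, contradicting the infinitude of $C$. (The same construction handles a finite $W$, which is trivially non-syndetic.)

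The main obstacle is the construction in the last step: the trimming of each gap $G_n$ by an amount tending to infinity is precisely what makes $m(c)$ bounded, and it must be balanced against the requirement $|J_n|\to\infty$ so that $A$ remains thick. Without the trimming, the naive candidate $A=\bigcup_n G_n$ need not work, since $W$ may accumulate against the left endpoints of the $G_n$ and the resulting set can contain a forbidden sumset. The other two implications are bookkeeping together with the appeal to \cref{prop_syndeticB+C}.
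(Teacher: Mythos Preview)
Your proof is correct and follows essentially the same route as the paper's: the same cyclic chain of implications, the same appeal to \cref{prop_syndeticB+C} for $\eqref{item2ofthepropositiononequivalentcontainment3}\Rightarrow\eqref{item1.5ofthepropositiononequivalentcontainment3}$, and the same idea for $\eqref{item1ofthepropositiononequivalentcontainment3}\Rightarrow\eqref{item2ofthepropositiononequivalentcontainment3}$ of building a thick set from trimmed gaps of $W$ so that $(A-b)\cap W$ is finite for every $b$. The only cosmetic differences are that the paper trims each gap $[a_n,b_n]$ only on the left (taking $[a_n+n,b_n]$), does not bother to make the gaps pairwise disjoint, and phrases the conclusion as ``every shift $A-t$ has finite intersection with $W$'' rather than bounding the index $m(c)$ explicitly; your two-sided trimming and disjointness are harmless extra bookkeeping.
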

\begin{proof}
The implication
$
\eqref{item2ofthepropositiononequivalentcontainment3}
\Rightarrow
\eqref{item1.5ofthepropositiononequivalentcontainment3}
$
follows from \cref{prop_syndeticB+C} and the implication $
\eqref{item1.5ofthepropositiononequivalentcontainment3}
\Rightarrow
\eqref{item1ofthepropositiononequivalentcontainment3}
$ is trivial.
For the final direction $
\eqref{item1ofthepropositiononequivalentcontainment3}
\Rightarrow
\eqref{item2ofthepropositiononequivalentcontainment3}
$, assume that $W$ is not syndetic and so for every $n\in\N$, there exist $a_n,b_n\in\N$ with $|b_n-a_n|\geq 2n$ and such that $W\cap [a_n,b_n]=\emptyset$. Then the set  $A=\bigcup_{n\in\N} [a_{n}+n,b_{n}]$ is a thick set with the property that any shift of it $A-t$ has finite intersection with $W$. Therefore one can not find  infinite sets $B\subset\N$, $C\subset W$ such that $B+C\subset A$.
\end{proof}

\begin{proposition}
\label{prop:syndetic-sum}
Let $W\subset\N$. 
The following are equivalent.
\begin{enumerate}
\item
\label{item1ofthepropositiononequivalentcontainment2}
For every syndetic set $A\subset\N$ there exist infinite sets $B\subset\N$, $C\subset W$ such that $B+C\subset A$.
\item
\label{item2ofthepropositiononequivalentcontainment2}
$W$ is infinite.
\end{enumerate}
\end{proposition}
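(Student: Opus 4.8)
The implication $\eqref{item1ofthepropositiononequivalentcontainment2}\Rightarrow\eqref{item2ofthepropositiononequivalentcontainment2}$ is immediate: if $W$ were finite there would be no infinite $C\subset W$ at all, and then $\eqref{item1ofthepropositiononequivalentcontainment2}$ would already fail for the syndetic set $A=\N$. The content is the converse, which I would prove by a back-and-forth construction. Fix an infinite $W$ and a syndetic $A$, and let $h$ be such that every interval of length $h$ meets $A$. The plan is to build, by simultaneous recursion, strictly increasing sequences $b_1<b_2<\cdots$ in $\N$ and $c_1<c_2<\cdots$ in $W$ so that $b_i+c_j\in A$ for all $i,j$; then $B=\{b_1,b_2,\dots\}$ and $C=\{c_1,c_2,\dots\}$ witness $\eqref{item1ofthepropositiononequivalentcontainment2}$. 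The crucial structural point is the order in which commitments are made: when $b_{k+1}$ is introduced I only demand $b_{k+1}+c_j\in A$ for the finitely many $c_1,\dots,c_k$ already present, whereas when $c_{k+1}$ is introduced I demand $c_{k+1}+b_i\in A$ for \emph{every} $b_1,\dots,b_{k+1}$. With this alternation each sum $b_i+c_j$ is certified into $A$ either at the step that introduced $b_i$ (when $i>j$) or at the step that introduced $c_j$ (when $j\ge i$), so the ``upper-triangular'' obstruction that defeats a naive one-sided diagonalization never arises --- this is precisely the device by which a sufficiently rich bipartite graph is seen to contain an infinite complete bipartite subgraph.

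The engine that makes the individual steps possible is the following one-line consequence of syndeticity: for every infinite $S\subset\N$ the set $G(S):=\{n\ge1:\ S\cap(A-n)\text{ is infinite}\}$ is again syndetic, with gap at most $h$. Indeed, if some window $\{n,n+1,\dots,n+h-1\}$ were disjoint from $G(S)$, then each of $S\cap(A-n),\dots,S\cap(A-n-h+1)$ would be finite, and hence for every sufficiently large $s\in S$ the interval $\{s+n,\dots,s+n+h-1\}$ would be disjoint from $A$, contradicting that $A$ is syndetic. One also notes that $G$ is monotone in $S$. Concretely, given the partial configuration $b_1,\dots,b_k$ and $c_1,\dots,c_k$, the values of $b_{k+1}$ that are legal and keep the recursion alive are those lying in $R_k\cap G(S_k)$, where $R_k:=\bigcap_{j\le k}(A-c_j)$ is the ``arithmetic'' reservoir of admissible $b$'s and $S_k:=W\cap\bigcap_{i\le k}(A-b_i)$ is the current reservoir of admissible $c$'s; symmetrically, $c_{k+1}$ must be chosen in $G(R_k)$ intersected with the reservoir of admissible $c$'s updated to reflect $b_{k+1}$. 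So each step amounts to finding a large element in the intersection of a finite intersection of shifts of $A$ with a syndetic set of the form $G(\cdot)$.

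The main obstacle --- and where I expect the real work --- is to guarantee that both reservoirs remain infinite all along, the recursion's base case included. The clean invariant to aim for is that the two reservoirs ``see each other'', $R_k\subseteq G(S_k)$ and $S_k\subseteq G(R_k)$, for then the intersections above are automatically infinite. This is delicate: shrinking one reservoir shrinks its $G$, so the invariant does not propagate for free, and even arranging it at the outset (e.g.\ with $S_0=W$ and $R_0=G(W)$, after a pigeonhole normalization over $\{0,\dots,h-1\}$ that lets one assume $W\subset A$) takes care. I would try to push it through by choosing, at each step, the new element not from the naive legal set but from a deliberately chosen syndetic subset of it that protects the mutual-richness invariant, leaning on the monotonicity of $G$ and on keeping the relevant intersections with $W$ infinite. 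Verifying that such a choice is always available is the technical heart of the proof.
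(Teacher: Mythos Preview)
Your proposal sets up a plausible back-and-forth recursion but, as you acknowledge, stops short at the ``technical heart'': maintaining an invariant that keeps the reservoirs $R_k$ and $S_k$ mutually rich. The candidate invariant $R_k\subseteq G(S_k)$ and $S_k\subseteq G(R_k)$ does guarantee that $R_{k+1}$ and $S_{k+1}$ are infinite, but it does not propagate: since $G$ is monotone, $G(S_{k+1})\subseteq G(S_k)$, and nothing forces $R_{k+1}\subseteq G(S_{k+1})$. Without the invariant at stage $k+1$ you must locate $b_{k+2}\in R_{k+1}\cap G(S_{k+1})$ directly; your lemma makes $G(S_{k+1})$ syndetic, but $R_{k+1}=\bigcap_{j\le k+1}(A-c_j)$ is merely infinite (a finite intersection of shifts of a syndetic set need not be syndetic), and an arbitrary infinite set can have finite, even empty, intersection with a given syndetic set. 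Your suggestion to restrict each choice to ``a deliberately chosen syndetic subset'' that protects the invariant is not made precise, and I do not see how to carry it out without importing a Ramsey-type argument anyway.

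The paper's proof avoids the recursion entirely via a short application of Ramsey's theorem. Since $A$ is syndetic, fix $r$ with $A\cup(A-1)\cup\cdots\cup(A-r)=\N$, and color each pair $\{w_1,w_2\}\subset W$ by some $i\in\{0,\dots,r\}$ with $w_1+w_2+i\in A$. Ramsey's theorem yields an infinite $W'\subset W$ on which the color $i$ is constant; split $W'=B'\cup C$ into two disjoint infinite pieces and set $B=B'+i$. Then $C\subset W$ and $B+C\subset A$. This sidesteps all reservoir bookkeeping: rather than intersecting decreasing families of shifts of $A$, one uses Ramsey to pre-select an infinite subset of $W$ on which a single shift works uniformly. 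Your $G(S)$ lemma is correct and, iterated one-sidedly, does yield an ordered configuration $\{b_i+c_j:i\le j\}\subset A$ with $c_j\in W$; the difficulty is precisely closing the other triangle, and Ramsey's theorem is the natural tool for that.
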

\begin{proof}
The implication
$
\eqref{item1ofthepropositiononequivalentcontainment2}
\Rightarrow
\eqref{item2ofthepropositiononequivalentcontainment2}
$
is obvious.
The converse implication uses Ramsey's theorem.
Assuming $W$ is infinite and $A$ is syndetic, find $r\in\N$ such that $A\cup(A-1)\cup\cdots\cup(A-r)=\N$.
Then color each pair $\{w_1,w_2\}$ of elements of $W$ with the color $i\in\{0,\dots,r\}$ such that $w_1+w_2+i\in A$.
Using Ramsey's theorem one can extract an infinite subset $W'\subset W$ such that every pair $\{w_1,w_2\}\subset W'$ has the same color $i$.
Next split $W'$ into two disjoint infinite sets $W'=C\cup B'$ and let $B=B'+i$.
It follows that $C\subset W$ and $B+C\subset A$.
\end{proof}

\subsection{Combinatorial obstructions to $B+B+t$}
Recall \cref{thm_BBt} which guarantees that every set of positive upper Banach density contains a translate of a 
sumset $\{b_1+b_2:b_1,b_2\in B,b_1\neq b_2\}$ for some infinite set $B\subset\N$.
\cref{eg:not2b} shows that the restriction $b_1 \ne b_2$ is necessary.
More generally, examples in which one cannot find a configuration of the form $\{b ,2b\}$, or perturbations thereof such as  $\{b + b', 2b \}$, provide obstructions to many reasonable questions one might pose about sumsets in positive density sets. 
Nonetheless, beyond a particular density threshold, no such obstructions arise and the following example closes in on these limitations.

\begin{example}
\label{example_nextexamplebutnowitsgone}
For $x\geq 0$, let $\{x\}$ denote the fractional part of $x$.  
Consider the set 
\[
A:=\Big\{m\in\N:\big\{\theta\log_2(m)\big\}\in U\Big\}
\]
for any $\theta>0$ and any set $U\subset\T$ for which $U$ and $U+\theta$ (calculated $\bmod\,1$) are separated by a positive distance.
Taking $\theta=1/2$ and $U=\big[0,\tfrac12\log_2(c)\big)$ for some $1 < c< 2$ we recover the set in~\eqref{eq_archemedianobstructions} of \cref{eg:not2b}.

We claim that $A$ does not contain $B+B+t$ for any infinite set $B\subset\N$ and shift $t\geq 0$.
Indeed, for any infinite set $B\subset\N$ and $t\geq 0$, we can take $b'\in B$ arbitrary and then choose $b\in B$ sufficiently large so that the difference
\[
\left|\log_2\left(\frac{2b+t}{b+b'+t}\right)-1\right|
\]
is less than the distance $d(U,U+\theta)/\theta$.
Multiplying by $\theta$ and reducing modulo one, it follows that one can not have both $b+b'+t$ and $2b+t$ in $A$.

Taking $U=[0,1/2-\varepsilon]$ for small $\varepsilon>0$ and $\theta=1/2$, we have that 
\[
\udens(A)=\frac23\cdot(2-4^\varepsilon)
\]
As $\varepsilon\to0$ we obtain an example $A$ with upper density arbitrarily close to $2/3$.
On the other hand, taking  $U=[0,1/2-\varepsilon]$ and setting $\theta=1/2+n$ for some large $n\in\N$ (this yields the same set $A$ as setting $\theta=1/2$ and $U=T^{-1}[0,1/2-\varepsilon]$ where $T\colon\T\to\T$ is multiplication by $2n+1$) one can check that 
\[
\ldens(A)=\frac{2^{(1/2-\varepsilon)/\theta}-1}{2^{1/\theta}-1}.
\]
As $\varepsilon\to0$ and $\theta\to\infty$ it follows that $\ldens(A)$ comes  arbitrarily close to $1/2$.
\end{example}

This leads us to formulate a conjecture on the threshold for the density of obstructions.
\begin{conjecture}
\label{question_unrestricted_density_greater_than_half}Let $A\subset\N$.
\begin{enumerate}
    \item If $\udens(A)>2/3$,  there is an infinite set $B\subset\N$ such that $A\supset B+B$.
    \item If $\ldens(A)>1/2$,  there is an infinite set $B\subset\N$ such that $A\supset B+B$.
\end{enumerate}
\end{conjecture}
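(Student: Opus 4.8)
My plan is to build the infinite set $B = \{b_1 < b_2 < \cdots\}$ greedily, at each stage recording the finitely many "forbidden" sums and using the density hypothesis to find a new element avoiding all of them. Suppose $b_1,\dots,b_n$ have been chosen with $b_i + b_j \in A$ for all $i \le j \le n$. To choose $b_{n+1}$ we need a natural number $m$ large enough that: (i) $m + b_i \in A$ for all $i \le n$, and (ii) $2m \in A$. The set of $m$ satisfying (i) is $\bigcap_{i\le n}(A - b_i)$; the set of $m$ satisfying (ii) is $\tfrac12(A \cap 2\N)$, i.e. $\{m : 2m \in A\}$. The crux is to show these two sets intersect in an infinite set — in fact it suffices that they intersect in arbitrarily large elements. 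So the whole problem reduces to the following density fact: if $\udens(A) > 2/3$ (resp.\ $\ldens(A) > 1/2$), then for every finite $F \subset \N$ the set $\{m : 2m \in A \text{ and } m + f \in A \ \forall f \in F\}$ is infinite.

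For the lower density case $\ldens(A) > 1/2$: write $\delta = \ldens(A)$, so $|A \cap [1,N]| \ge (\delta - o(1))N$. The even numbers in $A\cap[1,2M]$ number at least $|A\cap[1,2M]| - M \ge (2\delta - 1 - o(1))M$, so $E := \{m : 2m \in A\}$ has $\ldens(E) \ge 2\delta - 1 > 0$. Meanwhile, for the shift constraints, each set $A - f$ has $\ldens(A-f) = \delta$ (lower density is shift-invariant), so $\bigcap_{f\in F}(A-f)$ has lower density at least $1 - |F|(1-\delta)$... which is not immediately positive. This naive union bound is too weak, and this is where I expect the real difficulty to lie. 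The point of the threshold $2/3$ versus $1/2$ must be that in the upper-density regime one gets to choose the scale $N$ along which $A$ is dense and then work inside a window, whereas for lower density one must be more clever — presumably by not insisting all shift constraints hold simultaneously at a single element but instead exploiting that $A$ being dense at \emph{every} scale lets one reuse an infinitary pigeonhole: one should look at the color (membership pattern in $A$) of $m$ and $2m$ simultaneously and apply a Ramsey/Hindman-type extraction, or else restrict $B$ to a sparse enough sequence (e.g.\ lacunary) that the shift sets $A - b_i$ decorrelate.

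The cleanest route, I think, is to mimic the structure of the examples in \cref{example_nextexamplebutnowitsgone}: those show the threshold is sharp, and the extremal sets are pullbacks of intervals under $\log_2$, so one expects the positive direction to come from a "multiplicative/logarithmic" pigeonhole. Concretely: partition (a density-$1$ portion of) $\N$ into dyadic-type blocks, observe that within the right blocks $A$ occupies more than $2/3$ (resp.\ $1/2$) of the slots, and use that the map $m \mapsto 2m$ shifts the $\log_2$-coordinate by exactly $1$ while $m \mapsto m + f$ barely moves it for large $m$; then a counting argument on pairs $(m, 2m)$ inside a long window, combined with the density surplus over $2/3$, forces a valid $m$ to exist arbitrarily far out. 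Carrying out this counting — getting the constants $2/3$ and $1/2$ to fall out correctly and handling the interaction of the additive shifts $b_i$ with the multiplicative scaling — is the main technical obstacle, and it is the step I would expect to occupy most of the work; everything else (the greedy construction, verifying $B+B \subset A$) is routine bookkeeping once this lemma is in hand.
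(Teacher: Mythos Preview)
The statement you are attempting to prove is labeled \emph{Conjecture} in the paper and is presented there as an open problem; the paper gives no proof. The surrounding material (\cref{example_nextexamplebutnowitsgone}) only shows that the thresholds $2/3$ and $1/2$ are sharp from below, by exhibiting sets with upper density arbitrarily close to $2/3$ (resp.\ lower density arbitrarily close to $1/2$) none of whose shifts contain an unrestricted $B+B$. So there is nothing to compare your argument against; the question is simply whether your outline constitutes a proof, and it does not.

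Your greedy reduction is correct as far as it goes: if, for every finite $F\subset\N$, the set $\{m:2m\in A\text{ and }m+f\in A\ \forall f\in F\}$ is infinite, then one can build $B$ inductively. But there are two gaps. First, you have stated the lemma for \emph{all} finite $F$, which is more than the greedy construction needs (only $F=\{b_1,\dots,b_n\}$ arises, and these already satisfy $b_i+b_j\in A$); the stronger version may well be false at the given thresholds even if the conjecture holds, so proving it is not obviously the right target. Second, and more seriously, you yourself flag that the union bound collapses once $|F|$ exceeds roughly $1/(1-\delta)$, and your proposed remedy --- a ``logarithmic pigeonhole'' inspired by the extremal examples --- remains a heuristic rather than an argument. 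Knowing that the extremal sets are pullbacks of intervals under $\theta\log_2$ tells you what the obstruction looks like, but it does not supply a counting argument valid for an \emph{arbitrary} $A$ above the threshold: a generic $A$ has no logarithmic structure, and the tension between the additive shifts $m\mapsto m+b_i$ and the dilation $m\mapsto 2m$ is exactly the difficulty. Your proposal isolates the crux honestly but does not resolve it; as things stand, the conjecture is open and your sketch does not change that.
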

After we circulated our preprint, Koussek and  Radic understood the situation completely in~\cite{KR}: the conjecture is false as written but holds if one allows a shift of the set $A$  and, moreover, these results are sharp.  Additionally, they show that if one raises the upper density to $5/6$ or the lower density to $3/4$, the conclusion holds without the shift. 
We are left with a related question as to what occurs if we only assume, for example, that $|A\cap[N]|>3/4N+\log N$ for infinitely many $N$.

More generally, it would be interesting to know what properties of $A \subset \N$ guarantee that it contains $B+B+B$ for some infinite set $B \subset \N$. In particular, does any density assumption on $A$ suffice?
See also \cref{sec:general-equations} below for related questions.

We finish with a brief mention of partition regularity of $B+B$ with $B \subset \N$ infinite.
Hindman~\cite[Section 2]{Hindman1979-another} produced a three-coloring of the integers that does not admit a monochromatic set of the form $B+B$ for any infinite set $B\subset\N$, but the following question of J.\ C.\ Owings is still open.

\begin{question}[{\cite[Problem~E2494]{AmerMathMon1974Oct}}]
Is it true that, no matter how $\N$ is partitioned into two sets, one of the sets must contain $B+B$ for some infinite set $B \subset \N$?
\end{question}
It follows from the solution of 
\cref{question_unrestricted_density_greater_than_half}  by Koussek and Radic~\cite{KR} that both colors must have density $1/2$, and their paper has some further partial results in this direction.

\subsection{Ordered sums}
\label{sec:ordered}

In this section we consider generalizations of $B \oplus B$ such as
\begin{equation}
\label{eqn_poly_ordered_sumset}
\big\{f(b_1)+g(b_2): b_1,b_2\in B,~b_1<b_2\big\},
\end{equation}
where $f$ and $g$ are polynomials over $\N$.
Note that when $f(x)=g(x)=x$, the expression in~\eqref{eqn_poly_ordered_sumset} is precisely the sumset $B\oplus B$.
Although we do not formalize the terminology, we think of expressions like~\eqref{eqn_poly_ordered_sumset} as an ``ordered sum'' of $f(B)$ with $g(B)$.
It is immaterial whether one interprets the ordered sum of $f(B)$ with $g(B)$ as
\[
\big\{ f(b_1) + g(b_2) :b_1,b_2\in B,\, b_1 < b_2 \big\} \textup{ or }
\big\{ r + s : r\in f(B),~s\in g(B),~ r < s \big\}
\]
because one can obtain either as a subset of the other by refining $B$.
One could also consider ordered sumsets of unrelated sets $B$ and $C$; some results and open questions on such sumsets are collected in \cref{sec:szemeredi-and-sumset}.

In the following example we show that the restriction on the ordering $b_1< b_2$ cannot in general be replaced with $b_1\neq b_2$, even when $f$ and $g$ are both linear.

\begin{example}\label{proposition_thesameexampleagain}
Fix real numbers $\lambda_1 > \lambda_2 > 0$.
We build, using the set in \cref{example_nextexamplebutnowitsgone} as a model, a finite coloring of $\N$ such that for any infinite set $B\subset\N$ and any shift $t\geq 0$, the set
    $$\big\{\lfloor\lambda_1b_1+\lambda_2b_2\rfloor:b_1,b_2\in B,b_1\neq b_2\big\}+t$$
    is not monochromatic.
We define a finite coloring $\chi$ of the integers by setting
\[
\chi(n)=\lfloor\log_{\rho}(n)\rfloor\bmod 3
\]
where $\rho=\sqrt{\lambda_1/\lambda_2}>1$.
Fix $t\in\N$ and $B\subset\N$ infinite and let $b_1\in B$ be arbitrary.
Let $b\in B$ be sufficiently large depending on $\lambda_1,\lambda_2,t$ and $b_1$.
Let $m_1=\lfloor\lambda_1b_1+\lambda_2b\rfloor+t$ and $m_2=\lfloor\lambda_1b+\lambda_2b_1\rfloor+t$.
Note that $\rho^2m_1\approx m_2$ as $b\to\infty$, so
\[
\rho m_1<m_2 <\rho^3m_1
\]
if we choose $b$ large enough.
Taking $\log_{\rho}$ on each term we conclude that
\[
\log_{\rho}(m_2)\in\Big(\log_{\rho}(m_1)+1,\log_{\rho}(m_1)+3\Big)
\]
and after taking floors and reducing modulo  3 we get $\chi(m_1)\neq\chi(m_2)$.
\end{example}

Note that Ramsey's theorem implies~\eqref{eqn_poly_ordered_sumset} is partition regular.
This motivates the question whether instances of~\eqref{eqn_poly_ordered_sumset} can be found (up to a shift) in every set with positive density. 
Our first conjecture deals with the special case when $f$ and $g$ are linear polynomials.

\begin{conjecture}
\label{conj:k_box_j}
If $A\subset\N$ has positive upper Banach density and $\ell,m \in\N$ then there exist an infinite set $B = \{b_1 < b_2 < \cdots \}$ and a shift $t\geq 0$ such that
\begin{equation}
\label{eqn:ordered_sum}
\{ \ell b_i + m b_j : i < j \}
\end{equation}
is contained in $A-t$.
\end{conjecture}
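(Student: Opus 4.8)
The plan is to reduce Conjecture~\ref{conj:k_box_j} to the already-established \cref{thm_BBt} (together with \cref{prop:equivalences-for-shifts}-style congruence bookkeeping) by finding, inside $A$, a structured \Folner{} sequence along which the map $b \mapsto (\ell+m) b$ — or rather the ``ordered'' version $b_i,b_j \mapsto \ell b_i + m b_j$ — can be realized. Concretely, I would first observe that the set $\{\ell b_i + m b_j : i<j\}$ is an ordered sum of $\ell B$ with $m B$, so by the remark following \eqref{eqn_poly_ordered_sumset} it suffices to produce two infinite sets $P \subseteq \ell\N$ and $Q \subseteq m\N$, each with an infinite ``threaded'' structure, such that the ordered sum $\{p+q : p\in P,\ q\in Q,\ p<q\}$ lies in $A-t$; one then intersects with the appropriate residue class and rescales. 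The natural tool is the ergodic-theoretic machinery behind \cref{thm:MRR} and \cref{thm_BBt}: pass to a measure-preserving system $(X,\mu,T)$ and a set $E\subseteq X$ with $\mu(E) = \ubdens(A)$ obtained by the Furstenberg correspondence principle, so that $A \supseteq \{n : T^n x \in E\}$ along a \Folner{} sequence.

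The key steps, in order, would be: (1) apply the correspondence principle to $A$ to get $(X,\mu,T)$, $E$, and a generic point $x_0$; (2) replace $T$ by the commuting pair $T^{\ell}, T^{m}$ acting on $X$, and seek a point $y$ in the orbit closure and an infinite set $B$ such that $T^{\ell b_i + m b_j} x_0 \in E$ for all $i<j$ — this is where one wants an analogue of the ``for a.e.\ $y$, the set $\{n : T^n y \in E\}$ is large'' statement, upgraded to handle two-variable ordered sums with different coefficients on the two variables; (3) run the inductive selection of $b_1 < b_2 < \cdots$ exactly as in the proof of \cref{thm_BBt}/\cite{KMRR}, at each stage using that a certain self-correlation $\int f \cdot T^{\ell b}(\text{something}) \, d\mu$ stays bounded away from zero, now with $T$ replaced by $T^{\gcd(\ell,m)}$ or with the coefficients $\ell, m$ built into the generalized von Neumann / van der Corput estimates; (4) extract the shift $t$ from the residue-class analysis (the analogue of \cref{prop:equivalences-for-shifts}), noting $t$ may be taken in a bounded range determined by $\ell$ and $m$.

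The main obstacle I expect is step~(2)–(3): the proofs of \cref{thm_BBt} and \cref{thm_BCD} crucially exploit that all summands have coefficient $1$, so that the relevant averages are genuine \emph{ergodic averages of a single transformation} $T^{b_1} \cdots T^{b_k}$ and the measures $\mu_b := (T^{b})_*\mu$-type weak limits can be iterated. With distinct coefficients $\ell \ne m$ one is forced to control averages like $\frac{1}{N}\sum_{n\le N} T^{\ell n} f \cdot T^{m n} g$, i.e.\ a \emph{linear multiple ergodic average with distinct integer coefficients}; these converge (Host–Kra) but the positivity of the limit — needed to keep the inductive construction alive — is exactly the subtle point, since it can fail unless one first projects onto an appropriate Kronecker or nilfactor and checks that $E$ has positive ``fiber mass'' there. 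So the heart of the argument is to show that the two-coefficient ordered-sum average is bounded below by a quantity depending only on $\mu(E)$, possibly after passing to a syndetic set of scales; once that positivity is in hand, the selection of $B$ and the absorption of $t$ are routine adaptations of \cref{thm_BBt}.
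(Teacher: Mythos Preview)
There is no proof in the paper to compare against: \cref{conj:k_box_j} is stated as an open \emph{conjecture}, not a theorem. The only thing the paper establishes is the remark immediately following it, namely that the case $\ell=m$ reduces to \cref{thm_BBt}; the paper later notes (after \cref{q_bxbxt2}) that the full conjecture would follow from a positive answer to the Cartesian-product question via the map $(x,y)\mapsto \ell x+my$, but that question is also open.

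Your plan is a reasonable outline of how one might try to attack the problem, and you have correctly located the genuine gap yourself in step~(2)--(3). The issue is not merely technical: the proofs of \cref{thm_BBt} and \cref{thm_BCD} in \cite{KMRR,KMRR2} rely on an iterated construction in which, at each stage, one needs a set of $b$ for which $\mu\bigl(E\cap T^{-b}(\text{previously built set})\bigr)$ is bounded below. When $\ell\neq m$ the analogous step requires a uniform lower bound on correlations of the form $\mu(E\cap T^{-\ell b}E'\cap T^{-m b}E'')$, or more precisely on the measure of the set of good $b$'s for the next stage of the induction. Convergence of the averages $\frac{1}{N}\sum_{n\le N}T^{\ell n}f\cdot T^{m n}g$ (Host--Kra) does not by itself give you this positivity, and ``projecting onto the Kronecker/nilfactor'' does not obviously help: even on a Kronecker factor the relevant limit can be zero for the particular sets arising in the induction, and there is no known analogue of the Khintchine-type lower bound that would keep the construction alive uniformly over all stages. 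In short, the obstacle you flag is exactly why the conjecture is open, and nothing in your steps~(1)--(4) bridges it; the ``once that positivity is in hand'' clause is carrying the entire weight of the argument.
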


If $\ell = m $ then \cref{conj:k_box_j} follows from \cref{thm_BBt}. 

The following question asks whether it is possible to extend \cref{conj:k_box_j} in a way that would also contain \Szemeredi{}'s theorem as a special case (see \cref{sec:szemeredi-and-sumset} for another way to combine Szemer\'edis's theorem with sumsets).

\begin{question}
\label{q_sz_waffle_sum}
Fix $k \in \N$.
If $A\subset\N$ has positive upper Banach density, then does there exists an infinite set $B=\{b_1< b_2<\cdots\}$ and a shift $t\geq 0$ such that the set
\begin{equation}
\label{eqn_waffle_sz1}
\bigcup_{\ell=0}^k \{ \ell b_i+ b_j: i < j \} 
\end{equation}
is contained in $A-t$?
\end{question}

If in \cref{q_sz_waffle_sum} one asks for the set $B$ to be arbitrarily large but finite, then a positive answer follows immediately from \Szemeredi{}'s theorem, by taking $b_i=ib$ and $t=a$ for some long arithmetic progression $a,a+b,\dots,a+Lb$ contained in $A$.
It also follows from Rado's theorem~\cite{Rado33} that whenever $\N$ is partitioned into finitely many pieces, at least one of the pieces contains~\eqref{eqn_waffle_sz1}
for an arbitrarily long but finite sequence $b_1<b_2<\cdots<b_N$ without the need for a shift. 
For $B$ infinite, even the partition variant of \cref{q_sz_waffle_sum} is not known to us; in fact we do not even know whether in every finite partition of $\N$ there exists a cell containing a configuration $\{b_j,2b_i+b_j:i<j\}$.

Surprisingly,  \cref{q_sz_waffle_sum} has a negative answer when~\eqref{eqn_waffle_sz1} is replaced by
\begin{equation}
\label{eqn_waffle_sz1_negative}
\bigcup_{\ell = 0}^k \{ b_i + \ell b_j : i < j \} 
\end{equation}
because otherwise we would find $\{ b' + b, b'+2b \}$ within $A-t$ where $b$ is much larger than $b'$, but these are the sort of configurations that \cref{eg:not2b} precludes.
The same reasoning shows that there are finite colorings of $\N$ that do not admit monochromatic sets of the form~\eqref{eqn_waffle_sz1_negative}.

This leads us to inquire which ordered  sumset configurations are actually possible.

\begin{question}
\label{q:affine_images_of_sumsets}
Let $k\in\N$ and $(a_0,d_0),\ldots,(a_k,d_k) \in \N^2$.
What are necessary and/or sufficient conditions on $(a_0,d_0),\ldots,(a_k,d_k)$ such that for any set $A\subset\N$ with positive upper Banach density there exist an infinite set $B =\{b_1< b_2<\cdots\}$ and a shift $t\geq 0$ with
\[
\bigcup_{\ell=0}^k \{ a_\ell b_i + d_\ell b_j : i < j \}
\]
contained in $A-t$?
\end{question}

Again, the order should play a role, as 
 we expect that~\eqref{eqn_waffle_sz1} appears, up to a shift, in every positive density set, when we know that~\eqref{eqn_waffle_sz1_negative} does not.
The reason is that when one considers ordered sumsets, the  first and second summands seem to follow different rules. 
We expect that the rules for restricting the first summand are similar to those for restricting the gaps in sets of positive density.
The source of these rules can be understood dynamically as refinements of \Poincare{} recurrence;  more on this perspective is discussed in  \cref{sec:recurrence-and-sumset} and \cref{thm_sumset_recurrence} in particular.
On the other hand, the second summand can sometimes be drawn from the given set of positive density, but  
there do not seem to be any arithmetic restrictions that we can place on it.

The Furstenberg-\Sarkozy{} theorem~\cite{Furstenberg-1977,sarkozy78} states that every set of positive density contains a square difference.
This motivates the following question.

\begin{question}
\label{q_sumset_sarkozy}
If $A\subset\N$ has positive upper Banach density, must there exists an infinite set $B=\{b_1< b_2<\cdots\}$ and $t\geq 0$ such that 
\begin{equation}
\label{eqn:waffle_fs}
\{b_i^2+b_j: i<j\}
\end{equation}
is contained in $A-t$?
\end{question}

After we circulated our preprint, Ackelsberg~\cite{ackelsberg} found a counterexample by building a set $A$ with density arbitrarily close to 1 such that none of its shifts contains a set of the form~\eqref{eqn:waffle_fs}.

Since the configuration~\eqref{eqn:waffle_fs} is a special case of~\eqref{eqn_poly_ordered_sumset}, an application of Ramsey's theorem shows that it is partition regular.
By the same argument, the pattern $\{b_i + b_j^2 : i<j\}$ is also partition regular.
However, the next example shows that in \cref{q_sumset_sarkozy} one cannot replace~\eqref{eqn:waffle_fs} with $\{b_i+b_j^2: i<j\}$, lending  further credence to the heuristic that we can not place arithmetic restrictions on the second summand.

\begin{example}
\label{ex_reverse_restricted_sumset_squares_is_false}
Consider the set
\[
A=\N\setminus \bigcup_{k\in\N} \big(k^2-\log(k), k^2+\log(k)\big)
\]
which has full density, yet no shift of it contains $\{b_i^2+c_j: i>j\}$ where $B=\{b_1< b_2< \cdots \}$ and $C=\{c_1< c_2< \cdots \}$ are infinite. Picking $b_i=c_i$ shows that the order in \cref{q_sumset_sarkozy} cannot be reversed, and picking $c_i=b_i^2$ shows that there exists a full-density set with the property that no shift of it contains the ordered sumset of an infinite set of perfect squares.

We note that the only property of the squares that was used to produce this counterexample is its sparsity. We could replace the set of squares with any set of zero density (such as the prime numbers) and obtain analogous counterexamples.
\end{example}

\cref{q_sumset_sarkozy} is inspired by the Furstenberg-\Sarkozy{} theorem, but a positive answer to the question does not imply it.
Our next question asks for a common generalization of both.

\begin{question}
\label{q_sumset_sarkozy_basepoint}
If $A\subset\N$ has positive upper Banach density, must there exists an infinite set $B=\{b_1< b_2<\cdots\}$ and $t\geq 0$ such that
\begin{equation}
\label{eqn:waffle_fs2}
\{b_j,\, b_i^2+b_j: i<j\}
\end{equation}
is contained in $A-t$?
\end{question}

We conclude this section with the following generalization of Questions~\ref{q_sumset_sarkozy} and~\ref{q_sumset_sarkozy_basepoint} inspired by the polynomial \Szemeredi{} theorem of Bergelson and Leibman~\cite{bergelson-leibman}.

\begin{question}\label{q_sumset_polynomials}
Let $p_1,\dots,p_k\in\Z[x]$ satisfy $p_\ell(0)=0$ for all $\ell=1,\dots,k$ and fix $A\subset\N$ with $\ubdens(A)>0$.
Can one find an infinite set $B = \{ b_1 < b_2 < \cdots \}$ and $t \ge 0$ such that
\begin{equation}
    \label{eqn:waffle_multiplefs2}
\big\{ p_1(b_i) + b_j,\, p_2(b_i) + b_j,\, \ldots,\, p_\ell(b_i) + b_j: i < j\big\}
\end{equation}
is contained in $A-t$?
\end{question}
\begin{remark}
It follows from Ackelsberg's work~\cite{ackelsberg} that the answers to Questions~\ref{q_sumset_sarkozy_basepoint} and~\ref{q_sumset_polynomials} are both negative. However, the corresponding partition questions remain open. More precisely, is it true that for every finite partition of $\N$, one of the cells contains a configuration of the form~\eqref{eqn:waffle_fs2} or~\eqref{eqn:waffle_multiplefs2} for some infinite set $B=\{b_1<b_2<\cdots\}$?
\end{remark}

We finish with a variant of \cref{q_sumset_sarkozy} where instead of squares one considers a product set.
\begin{question}
\label{q_sumset_itereated_sarkozy}
If $A\subset\N$ has positive upper Banach density, must there exists an infinite set $B=\{b_1< b_2<\cdots\}$ and $t\geq 0$ such that
\begin{equation*}
\{b_ib_j+b_k: i<j<k\}
\end{equation*}
is contained in $A-t$?
\end{question}
Similar to the way \cref{q_sumset_sarkozy} is generalized in  Questions~\ref{q_sumset_sarkozy_basepoint} and~\ref{q_sumset_polynomials}, one can inquire about analogous generalizations of \cref{q_sumset_itereated_sarkozy}.

\subsection{Sumsets of the form \texorpdfstring{$B+mB$}{B+mB}}
\label{sec:general-equations}

\cref{proposition_thesameexampleagain} provides for each $m\in\N$ a $3$-coloring of $\N$ without an infinite sumset of the form $B+mB$ with $B$ infinite. 
The coloring in the example depends heavily on the choice of $m$, raising the question whether this dependence is unavoidable.

\begin{question}
\label{question_withmultiplicativehints_coloring2}
Is it true that  for any finite coloring of $\N$, there is an infinite set $B \subset \N$ and some $m \in \N$ such that $B+mB$ is monochromatic?
\end{question}

It may be that the set of problematic $m$ is actually small in a multiplicative sense; we make this precise in the next question.

\begin{question}
\label{question_withmultiplicativehints_coloring}
Is it true that for any finite coloring of $\N$, there exists $q\in \N$ such that for all $m\in\N$ with $\gcd(m,q)=1$ there is an infinite set $B\subset \N$ such that  $B+mB$ is monochromatic?
\end{question}

One could also ask for a density version of \cref{question_withmultiplicativehints_coloring2}, but as the following example shows, some care is needed. 
Note that every thick (\cref{def:thick_synd_ps}) set $T \subset \N$ has $\ubdens(T) = 1$ but need not have $\udens(T) = 1$.

\begin{example}\label{example_sparser} 
There exists a thick set $A\subset\N$ with $\udens(A)>0$ such that whenever
\begin{equation}
\label{eq_runningoutoflabels}
A-t\supset\big\{b_1+mb_2:b_1,b_2\in B, b_1\neq b_2\big\}
\end{equation}
for some $m\geq 2$, $t\geq 0$, and set $B\subset\N$, then $B$ must be finite.
To construct such a set, let $(x_n)_{n\in\N}$ be a quickly increasing sequence (so that $x_{n+1}>4x_{n}^2$) and let 
\begin{equation}
    A=\bigcup_{n\in\N}[x_n,\tfrac32x_n].
\end{equation}
We observe that $\udens(A)=1/3$.
Suppose $t,m\in\N$ and $B\subset\N$ satisfy~\eqref{eq_runningoutoflabels}. 
We show that $B$ can not be infinite.
Indeed, take $b_0\in B$ and suppose that there exists $b\in B$ arbitrarily large, depending on $b_0$.
Since $b_0+mb+t\in A$ and $mb_0+b+t\in A$, there exist $n,n'\in\N$ such that $b_0+mb+t\in[x_n,\tfrac32x_n]$ and $mb_0+b+t\in[x_{n'},\tfrac32x_{n'}]$.
Since $b>b_0$ and $m\geq1$, we have $n\geq{n'}$.
If $n={n'}$, then 
$$b_0+mb+t\leq\tfrac32(mb_0+b+t)\iff b\leq\frac{3m-2}{2m-3}b_0+\frac{t}{2m-3},$$
but for large enough $b$ this is not possible.
If $n>{n'}$ then 
$$b_0+mb+t\geq x_n>4x_{n'}^2>2x_{n'}(mb_0+b+t)>(mb_0+b+t)^2>b^2,$$
whence $b(b-m)<b_0+t$, and again this is not possible for sufficiently large $b$.
\end{example}
\medskip

Just as the idea behind the construction in \cref{eg:not2b} is to avoid pairs $\{b,2b\}$, the idea behind \cref{example_sparser} is to avoid pairs $\{b,mb\}$ for all $m \in \N$ with $m\geq2$;
the set constructed in \cref{example_sparser} has the property that it only contains finitely many pairs $\{b,mb\}$ for any given $m\geq 2$.
Sets with positive upper density can exhibit even more extreme behavior. A well-known, but complicated, example of Besicovitch~\cite{Besicovitch35} produces sets with positive upper density containing no pair $\{b,mb\}$ for $b,m\in\N$ and $m\geq 2$. However, Davenport and \Erdos{}~\cite{Davenport-Erdos36, Davenport-Erdos51} showed that if one replaces upper density with the  notion of upper logarithmic density, which is more robust under dilation, then this behavior is avoided.

\begin{definition}[Logarithmic density]
\label{def_log_density}
Given a set $A\subset\N$,  we define its lower and upper logarithmic density, respectively, by the formulas
\[
\ldens_{\log}(A):=\liminf_{N\to\infty}\frac1{\log N}\sum_{n=1}^N\frac{\one_A(n)}n\quad\text{ and }\quad\udens_{\log}(A):=\limsup_{N\to\infty}\frac1{\log N}\sum_{n=1}^N\frac{\one_A(n)}n
\]
and when $\ldens_{\log}(A)=\udens_{\log}(A)$, we say that the logarithmic density of $A$ exists and denote the common value simply by $\dens_{\log}(A)$.
\end{definition}

For any set $A\subset\N$, one has $\ldens(A)\leq\ldens_{\log}(A) \leq \udens_{\log}(A) \leq \udens(A)$. 
It can be verified directly that the set produced in \cref{example_sparser} has zero upper logarithmic density, and so we ask the following. 

\begin{question}
\label{question_withmultiplicativehints}
Does every set $A\subset\N$ with $\udens_{\log}(A)>0$ contain a sumset $B+mB+t$ for some $m\geq2$, $t\geq0$ and some infinite set $B\subset\N$?
\end{question}

All our examples thus far of sets with positive density that avoid some unrestricted sumset are built at a logarithmic scale. 
We would like to know if this is the only obstruction.
One could attempt to make this precise by asking whether any set $A\subset\N$ with $\ubdens(A)>0$ and $\{\log(n):n\in A\}$ uniformly distributed modulo $1$ contains an unrestricted sumset $B+B+t$.
Unfortunately, as written it is not well posed since the sequence $\log(n)$ itself is not uniformly distributed modulo $1$.
This inconvenience can be avoided by using logarithmic averages, and we formalize this in the following question.

\begin{question}
If $A\subset\N$ has $\dens_{\log}(A)>0$ and for any interval $(a,b)\subset[0,1)$ and $\theta>0$ one has
\[
\dens_{\log}\Big(\Big\{n\in A:\big\{\theta\log n\}\in(a,b)\Big\}\Big)=\dens_{\log}(A)\cdot(b-a)
\]
then does $A$ contain a sumset $B+B+t$ for some $t\geq0$ and  infinite set $B\subset\N$?
\end{question}

\subsection{Uniformity norms and shifts}
\label{sec_uniformity_norms}

We have seen in \cref{prop:equivalences-for-shifts} that the shift $t$ in \cref{conj:bbbt} can be fully understood in terms of congruence obstructions.
One may wonder whether the shift in \cref{q:bbtbbbtbbbbt} is subject to the same restrictions.
However, the next example shows that is not the case.

\begin{example}
\label{example_shift-needed} 
For every $m\in\N$, the set $A_m:=2m\N-1$ has $\ubdens(A_m)>0$, but $A_m-t$ does \emph{not} contain $B\cup(B\oplus B)$ for any $t\in\{0,\dots,m-1\}$.
Indeed, suppose $\{b,b'\}\subset A_m-t$. Then $b,b'\equiv-1-t\bmod2m$ and so $b+b'\equiv-2-2t \bmod2m$. 
Since $-1-t\equiv-2-2t\bmod2m$ only holds when $t\equiv-1\bmod2m$, it follows that $b+b'\notin A_m-t$.
\end{example}
This example tells us that congruence obstructions are not the only obstacle.
Nevertheless, we believe that if similar, but higher order, restrictions on $A$ are imposed, then the need for a shift in \cref{q:bbtbbbtbbbbt} is avoided. 
We formalize this using uniformity norms.

Given a \Folner{} sequence $(\Phi_N)_{N\in\N}$ in $\N$, we say that a bounded function $f\colon \N\to\R$ \define{admits correlations along $\Phi$} if the limit
\[
\lim_{N\to\infty} \frac{1}{|\Phi_N|}\sum_{n\in\Phi_N} f(n+h_1)\cdots f(n+h_k)
\]
exists for all $h_1,\ldots,h_k\in\N\cup\{0\}$.
Note that for any $f\colon \N\to\R$ and any F\o lner sequence $\Phi$, there exists a subsequence $\Psi$ of $\Phi$ along which $f$ admits correlations. 
Under this assumption, the \define{local uniformity seminorms of $f$ along $\Phi$} are defined inductively by
\begin{align*}
\ghkn{f}_\un{0}{\Phi}
&
=
\lim_{N\to\infty} \frac{1}{|\Phi_N|}\sum_{n\in\Phi_N} f(n)
\\
\ghkn{f}_\un{k+1}{\Phi}^{2^{k+1}}
&
=
\lim_{H\to\infty} \frac{1}{H}\sum_{h=1}^H \ghkn{\Delta_h f}_\un{k}{\Phi}^{2^k}, 
\end{align*}
 where
\[
(\Delta_h f)(n)=f(n)f(n+h)
\]
for all $n \in \N$ and all $h \in \N \cup \{0\}$.
These types of uniformity seminorms were introduced in~\cite{HK-uniformity}, where the existence of the limits is deduced from analogous ergodic results.

A set $A\subset\N$ is \define{$\un{k}{\Phi}$-uniform with respect to the  \Folner{} sequence $\Phi$} if $\one_A$ admits correlations along $\Phi$, $\dens_\Phi(A) > 0$,  and $\ghkn{\one_A-\dens_\Phi(A)}_\un{k}{\Phi}=0$. 
(The restriction of this definition to sets with  $\dens_\Phi(A)>0$ is just to avoid trivially uniform sets such as the empty set.) 
We believe that, for uniform sets, one can take $t=0$ in \cref{q:bbtbbbtbbbbt}.

\begin{conjecture}
\label{conj_uni_shift}
Let $A\subset\N$ and $k\geq2$. 
If there exists a \Folner{} sequence $\Phi=(\Phi_N)_{N\in\N}$ 
with respect to which $A$ is $\un{k}{\Phi}$-uniform
then for all $\ell_1,\ldots,\ell_k\in\N$ there exists an infinite set $B\subset\N$ such that
\[
B^{\oplus \ell_1},~B^{\oplus \ell_2},~\ldots,~B^{\oplus \ell_k}\subset A.
\]
\end{conjecture}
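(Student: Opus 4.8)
\emph{A proposed approach.} The plan is to run the greedy dynamical construction behind \cref{thm_BBt}, \cref{thm:MRR} and \cref{thm_BCD}, using the $\un{k}{\Phi}$-uniformity of $A$ to show that the translate $t$ appearing in those arguments can be taken to be $0$.

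\emph{Step 1: a dynamical model in which uniformity becomes triviality of a factor.} First one would apply a Furstenberg-type correspondence principle adapted to $\Phi$, in the spirit of~\cite{MRR,Host,KMRR}, to obtain an invertible measure-preserving system $(X,\mathcal{X},\mu,T)$, a set $\tilde A\in\mathcal{X}$ with $\mu(\tilde A)=\dens_\Phi(A)=:d>0$, and a point $x_0\in X$ for which every correlation average of $\one_A$ along $\Phi$ equals the corresponding integral against $\mu$; in particular
\[
\dens_\Phi\big((A-h_1)\cap\cdots\cap(A-h_m)\big)=\int_X\prod_{i=1}^m\one_{\tilde A}(T^{h_i}x)\d\mu(x)
\]
for all $h_1,\dots,h_m\in\N\cup\{0\}$. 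By the results of~\cite{HK-uniformity} one may arrange the model so that the combinatorial seminorm $\ghkn{\one_A-d}_{\un{k}{\Phi}}$ equals the Host--Kra seminorm $\ghkn{\one_{\tilde A}-d}_{\un{k}{X}}$; the hypothesis then reads $\ghkn{\one_{\tilde A}-d}_{\un{k}{X}}=0$, which is equivalent, over the ergodic decomposition, to $\mathbb{E}\big(\one_{\tilde A}\mid\mathcal{Z}_{k-1}\big)=d$, where $\mathcal{Z}_{k-1}$ denotes the maximal $(k-1)$-step pronilfactor of $X$. Heuristically, $A$ carries no structured behaviour of order $\le k-1$, and it is exactly such structure---congruence constraints, Bohr and nilpotent obstructions---that forces the shift in the non-uniform statements, the odd numbers and \cref{example_shift-needed} being the prototypes.

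\emph{Step 2: the shift-free greedy construction.} Next one would build $B=\{b_1<b_2<\cdots\}$ by a greedy procedure that simultaneously controls all $k$ families $B^{\oplus\ell_1},\dots,B^{\oplus\ell_k}$. Writing $L=\max_j\ell_j$, maintain a nested sequence of positive-measure sets $X=X_0\supset X_1\supset\cdots$ so that, once $b_1,\dots,b_n$ have been chosen, (i) every sum of $r$ distinct elements of $\{b_1,\dots,b_n\}$ with $r\in\{\ell_1,\dots,\ell_k\}$ already lies in $A$, and (ii) for every sum $s$ of fewer than $L$ distinct $b_i$'s that could still be completed to a relevant total, $T^s x\in\tilde A$ for all $x\in X_n$. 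Choosing $b_{n+1}$ then amounts to selecting it in a return-time set that completes all relevant sums whose largest index is $n+1$ while leaving a positive-measure successor $X_{n+1}$, and the point is that such $b_{n+1}$ exist for a \emph{positive density} of choices, with \emph{no} shift. To see this one exploits the freedom to take $b_{n+1}$ super-exponentially larger than $b_n$: under this lacunarity each newly completed sum is a fixed bounded-complexity expression in the earlier $b_i$'s plus the single new variable, so the governing average is estimated by a generalized von Neumann / van der Corput inequality whose controlling quantity is $\ghkn{\one_{\tilde A}-d}_{\un{k}{X}}=0$; hence the average equals its ``random'' value, a positive power of $d$, and the greedy step never stalls. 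Taking $B=\{b_n:n\in\N\}$ then gives $B^{\oplus\ell_1},\dots,B^{\oplus\ell_k}\subset A$.

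\emph{Main obstacle.} The crux is Step 2: explaining why a hypothesis of bounded order $k$ suffices to build $B^{\oplus\ell_j}$ for \emph{arbitrarily large} $\ell_j$. This rests on extracting from the lacunarity of $B$ a genuine reduction of the effective order of the relevant systems of linear forms down to $k$---so that each completed sum involves only boundedly many truly interacting variables---and then proving the corresponding multilinear seminorm estimate in the non-ergodic generality imposed by the correspondence principle. A further technical burden is arranging the reservoir sets $X_n$ so that $\tilde A$ stays controlled by the \emph{global} seminorm after restriction, or equivalently so that relative uniformity is never needed, while simultaneously keeping all $k$ families alive. The case $k=2$, $\ell_1=\ell_2=2$---a shift-free, $\un{2}{\Phi}$-uniform refinement of \cref{thm_BBt}---is the natural first target and should already display these difficulties in their cleanest form.
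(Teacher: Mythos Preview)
The statement is an open \emph{conjecture}: the paper does not give a proof. What the paper does prove is the very special case $k=2$, $\ell_1=1$, $\ell_2=2$ (\cref{prop_U2_implies_no_shift_needed}), via the elementary weak-mixing/greedy argument, and it remarks that the same method handles $\ell_i=i$ for general $k$. Your suggested ``first target'' $k=2$, $\ell_1=\ell_2=2$ is strictly weaker than this---it only asks for $B\oplus B\subset A$, which is already contained in \cref{prop_U2_implies_no_shift_needed}. The genuinely open first case is $k=2$ with some $\ell_j=3$: the paper explicitly states that it does not expect the method of \cref{prop_U2_implies_no_shift_needed} to show that a $\un{2}{\Phi}$-uniform set contains $B^{\oplus 3}$ without new tools.

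Your Step~2 contains a real gap, and your ``lacunarity reduces complexity'' heuristic does not close it. In the greedy construction for $B^{\oplus 3}$, the reservoir at stage $n$ is $C_n=\bigcap_{i<j\le n}(A-b_i-b_j)$, and to continue you must find $b_{n+1}$ for which
\[
\dens_\Phi\Bigl(C_n\cap\bigcap_{i\le n}(A-b_i-b_{n+1})\Bigr)>0,
\]
i.e.\ $\dens_\Phi\bigl(C_n\cap(D-b_{n+1})\bigr)>0$ with $D=\bigcap_{i\le n}(A-b_i)$. Weak mixing of $\one_A-d$ (which is what $\un{2}{\Phi}$-uniformity gives via \cref{lemma_weakmixingisback}) lets you control $\dens_\Phi\bigl(C\cap(A-m)\bigr)$ for arbitrary $C$, but here you need to shift the \emph{product} $\one_D=\prod_i\one_{A-b_i}$, and there is no reason for $\one_D-\dens_\Phi(D)$ to inherit $\un{2}{\Phi}$-uniformity from $\one_A-d$: products of functions with vanishing $U^2$-seminorm need not have vanishing $U^2$-seminorm. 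Lacunarity of $B$ does not help, because the problematic object $D$ is fixed before $b_{n+1}$ is chosen and already involves several translates of $A$. This is precisely the point at which the bounded-order hypothesis stops matching the growing number of constraints, and why the paper flags the case $B^{\oplus 3}$ under $U^2$-uniformity as requiring genuinely new ideas.
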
 
One could also formulate a stronger version of this conjecture, with the local uniformity seminorms replaced by the Gowers seminorms (see for example~\cite{Green_Tao08} for definitions).

We remark that \cref{conj_uni_shift} is false for $k=1$. This can be seen by taking $\ell_1=2$ and $A=2\N-1$, which is a $\un{1}{\Phi}$-uniform set for every \Folner{} sequence $\Phi$.
\cref{conj_uni_shift} is much simpler when $\ell_i = i$ for all $1 \le i \le k$ , and we include the proof for $k=2$. 
Without the constraint $\ell_i = i$, we do not have a proof even for $k = 2$ and we do not expect that the proof of \cref{prop_U2_implies_no_shift_needed}  can be modified to prove, for example, that when $A$ is $\un{2}{\Phi}$-uniform with respect to a \Folner{} sequence $\Phi$ it contains $B \oplus B \oplus B$ without introducing new tools. 

\begin{proposition}
\label{prop_U2_implies_no_shift_needed}
    If $A\subset\N$ is $\un{2}{\Phi}$-uniform with respect to some \Folner{} sequence $\Phi$ then there exists an infinite $B\subset A$ with $B\oplus B\subset A$.
\end{proposition}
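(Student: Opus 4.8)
The plan is to build the infinite set $B = \{b_1 < b_2 < \cdots\}$ greedily, one element at a time, using the $\un{2}{\Phi}$-uniformity of $A$ to guarantee that at each stage we can find a new element that creates only ``good'' pairwise sums. Write $\alpha = \dens_\Phi(A) > 0$ and $f = \one_A - \alpha$, so $\ghkn{f}_\un{2}{\Phi} = 0$. Suppose inductively that $b_1 < \cdots < b_n$ have been chosen so that $b_i + b_j \in A$ for all $i \le j \le n$ (with $i \ne j$; one can decide whether to include the diagonal — here we want $B \oplus B$, so we only need $i < j$, but it is cleanest to also arrange $2b_i \in A$ is \emph{not} required). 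At the next step we must choose $b_{n+1}$ larger than $b_n$ with $b_i + b_{n+1} \in A$ for every $i = 1, \dots, n$; equivalently, we need $b_{n+1}$ to lie in the set $\bigcap_{i=1}^n (A - b_i)$, intersected with $(b_n, \infty)$.

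The heart of the argument is the following counting statement: for any fixed finite set $\{h_1, \dots, h_n\} \subset \N \cup \{0\}$, the density along $\Phi$ of $\bigcap_{i} (A - h_i)$ is close to $\alpha^n$ — more precisely, it is at least $\alpha^n - \varepsilon_n$ where $\varepsilon_n \to 0$ is controlled by the $\un{2}{\Phi}$ seminorm being zero. This is the standard generalized von Neumann / Cauchy--Schwarz argument: expand $\frac{1}{|\Phi_N|}\sum_{m \in \Phi_N} \prod_{i=1}^n \one_A(m + h_i)$ by writing each $\one_A = \alpha + f$, and bound the error terms (those involving at least one factor of $f$) using repeated Cauchy--Schwarz, which brings in $\ghkn{f}_\un{2}{\Phi}$; since that seminorm vanishes, all error terms vanish and the density is exactly $\alpha^n > 0$. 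The subtlety is that a vanishing $\un{2}{\Phi}$ seminorm controls correlations of the form $\prod_i f(m + h_i)$ for \emph{arbitrarily many} shifts $h_i$, which is exactly what the iterated Cauchy--Schwarz / van der Corput argument delivers from $\un{2}$-uniformity; this is where one uses that correlations along $\Phi$ exist, so all the limits make sense. Granting this, $\bigcap_{i=1}^n (A - b_i)$ has density $\alpha^n > 0$ along $\Phi$, hence is infinite, so we may pick $b_{n+1}$ in it beyond $b_n$; this closes the induction, and the resulting infinite $B$ satisfies $B \oplus B \subset A$ (and $B \subset A$ after absorbing $b_1$ into $A$, or simply because each $b_i$ can be chosen in $A$ — note $A - 0 = A$ has positive density, so include $h = 0$ among the shifts).

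The step I expect to be the main obstacle is making the error-term estimate uniform enough in $n$: naively, each application of Cauchy--Schwarz can square the number of terms and one must check the errors are all bounded by (powers of) $\ghkn{f}_\un{2}{\Phi} = 0$ rather than merely by something small. The clean way around this is that \emph{one} fixed seminorm $\ghkn{\cdot}_\un{2}{\Phi}$ being zero kills \emph{all} such error terms exactly — there is no accumulation of $\varepsilon$'s because each error is literally $0$, not just small — so the density of $\bigcap_{i=1}^n(A-b_i)$ is exactly $\alpha^n$ for every $n$. One should state the generalized von Neumann inequality as a lemma (``if $\ghkn{f}_\un{2}{\Phi} = 0$ then $\lim_N \frac{1}{|\Phi_N|}\sum_{m\in\Phi_N} \prod_{i=1}^n(\alpha + f)(m+h_i) = \alpha^n$ for all distinct $h_1,\dots,h_n$''), prove it by induction on $n$ via van der Corput, and then the greedy construction above is immediate. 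A minor point to handle carefully: one needs $\one_A$ to admit correlations along $\Phi$ — which is part of the definition of $\un{2}{\Phi}$-uniformity — so every limit invoked in the expansion genuinely exists.
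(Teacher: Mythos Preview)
Your greedy construction is the same strategy as the paper's proof, but the counting lemma you rely on is false as stated. You claim that $\ghkn{f}_{\un{2}{\Phi}}=0$ forces
\[
\lim_{N\to\infty}\frac{1}{|\Phi_N|}\sum_{m\in\Phi_N}\prod_{i=1}^n \one_A(m+h_i)=\alpha^n
\]
for \emph{every} fixed tuple $h_1,\dots,h_n$. This is not true: after expanding $\one_A=\alpha+f$, the cross terms include expressions like $\lim_N\frac{1}{|\Phi_N|}\sum_m f(m+h_1)f(m+h_2)$, which is just the autocorrelation of $f$ at the fixed lag $h_2-h_1$. The $\un{2}{\Phi}$-seminorm controls the \emph{average} of such correlations over the lag (equivalently, it says $f$ is weak mixing), not each individual one. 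In ergodic language, $\ghkn{f}_{U^2}=0$ means $f$ is orthogonal to the Kronecker factor, hence weak mixing, but weak mixing without strong mixing allows $\int f\cdot T^h f\,d\mu\neq 0$ for specific $h$. The ``generalized von Neumann'' inequality you invoke applies to pattern counts where one also averages over the shifts, not to correlations at fixed shifts; van der Corput in the single remaining variable $m$ only increases the number of factors and does not reduce to $\ghkn{f}_{\un{2}{\Phi}}$.

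The paper's fix is exactly to use weak mixing rather than the false exact identity: from $\ghkn{\one_A-\alpha}_{\un{2}{\Phi}}=0$ one deduces (see the appendix lemma) that for any set $C$ admitting correlations, the set of $n$ with $\big|\dens_\Phi\big((A-n)\cap C\big)-\alpha\,\dens_\Phi(C)\big|>\varepsilon$ has zero Banach density. So at each stage, having built $C_{n-1}=A\cap(A-b_1)\cap\cdots\cap(A-b_{n-1})$ with $\dens_\Phi(C_{n-1})>0$, density-most choices of $b_n\in C_{n-1}$ satisfy $\dens_\Phi\big(C_{n-1}\cap(A-b_n)\big)>0$; choose any such $b_n$ and continue. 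This closes the induction with the same conclusion $B\subset A$ and $B\oplus B\subset A$, but without the incorrect claim that the density is exactly $\alpha^n$.
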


\begin{proof}
Since $\ghkn{\one_A-\dens_
    \Phi(A)}_\un{2}{\Phi}=0$ one can show (for completeness, the proof is given in \cref{lemma_weakmixingisback} in the appendix) that the set $A$ is weakly mixing in the sense that
\[
\ubdens\Big(\Big\{n:\big|\dens_\Phi\big((A-n)\cap C\big)-\dens_\Phi(A)\dens_\Phi(C)\big|>\varepsilon\Big\}\Big)=0
\]
for any $\varepsilon>0$ and any set $C\subset\N$ for which all the implicit limits exist. 
Now choose $b_1\in A$ such that $\dens_\Phi\big((A-b_1)\cap A\big)>0$.
(Note that with respect to $\Phi$ a full-density set of $b_1\in A$ satisfies this.)
Put $C_1:=A\cap(A-b_1)$.  

Recursively, for each $n=2,3,\dots$ choose $b_n\in C_{n-1}$ such that
\[
\dens_\Phi\big(C_{n-1}\cap(A-b_n)\big)>0
\]
and set  $C_n:=C_{n-1}\cap(A-b_n)$.
It is clear that $B:=\{b_n:n\in\N\}\subset A$ and that $B\oplus B\subset A$.
\end{proof}

The following example gives a set $A$ which is $k$-step uniform but not $(k+1)$-step uniform and has the property that it contains $B\cup \cdots\cup B^{\oplus k}$ but not $B\cup \cdots\cup B^{\oplus k} \cup B^{\oplus k+1}$.
In this sense, the $k$-step seminorms can be said to be ``characteristic'' for $k$-fold sumsets but not for $(k+1)$-fold sumsets.
Our example is inspired by~\cite{FLW-2006},  where sets of $k$-recurrence that are not $(k+1)$-recurrence are constructed in a similar way (see \cref{sec:recurrence-and-sumset} for definitions and related results).

\begin{example}\label{example_nilbohrset}
For $k\in\N$, $\alpha\in\R\setminus\Q$, and $\varepsilon>0$ sufficiently small, it can be shown that the set
\[
A=\Bigl\{n\in\N: \{n^k\alpha\}\in \bigl[\tfrac{1}{2},\, \tfrac{1}{2}+\varepsilon\bigr]\Bigr\}
\]
satisfies $\|\one_A-\varepsilon\|_{\un{k}{\Phi}}=0$ and $\|\one_A-\varepsilon\|_{\un{k+1}{\Phi}}>0$ for all \Folner{}-sequences $\Phi$ (this follows from results in~\cite{FLW-2006}).
There is an infinite set $B$ such that $B\cup \cdots\cup B^{\oplus k}\subset A$ but there exists no infinite set $B$ with $B\cup \cdots\cup B^{\oplus k+1}\subset A$.
We prove this claim for the case $k=2$ and leave the general case to the interested reader.
The fact that 
\[
A=\Bigl\{n\in\N: \{n^2\alpha\}\in \bigl[\tfrac{1}{2},\, \tfrac{1}{2}+\varepsilon\bigr]\Bigr\}
\]
contains $B\cup B^{\oplus 2}$ follows from \cref{prop_U2_implies_no_shift_needed}.
In fact, since at each stage of the inductive construction in the proof of \cref{prop_U2_implies_no_shift_needed} there is a positive density set from which the members of $B$ can be drawn, there are in a sense many sets $B \subset \N$ for which $B\cup B^{\oplus 2}\subset A$.

We are left with showing that $A$ does not contain $B\cup B^{\oplus 2}\cup B^{\oplus 3}$.
Let $B\subset\N$ be infinite.
In view of Ramsey's theorem, replacing $B$ by an infinite subset of itself if needed, we can assume that for every distinct $b,b'\in B$, $\{bb'\alpha\}\approx_\varepsilon \gamma$ for some fixed $\gamma\in[0,1)$, where by $x \approx_\varepsilon y$ we mean that $|x-y| < C\varepsilon$ for some absolute constant $C > 0$.
By further refining $B$ we can assume that $\{b^2\alpha\} \approx_\varepsilon \beta$ for every $b\in B$ and some $\beta\in[0,1)$.

If $B\subset A$ it follows that $\beta\approx_\varepsilon1/2$.
If $B\oplus B\subset A$, expanding the square $(b+b')^2\alpha$ it follows that $2(\beta+\gamma)\approx_\varepsilon1/2$.
If $B^{\oplus3}\subset A$, then expanding the square $(b_1+b_2+b_3)^2\alpha$ it follows that $3\beta+6\gamma\approx_\varepsilon1/2$.
Since it is impossible that $\beta$, $2\beta+2\gamma$, and $3\beta+6\gamma$ are simultaneously close to $1/2$ modulo~$1$, it follows that $A$ cannot contain $B\cup B^{\oplus 2}\cup B^{\oplus 3}$.
\end{example}

\subsection{The density Ramsey property}
\label{sec:szemeredi-and-sumset}
The sumset results and \Szemeredi{}'s theorem both show the existence of patterns in a set of positive upper Banach density.  Seeking a natural unification of these results, the next definition is used to formulate such a statement (see \cref{q_sz_waffle_sum} for a different way to combine Szemer\'edis's theorem with sumsets).

\begin{definition}
We say that a collection $\CD \subset\{F\subset\N\colon 0 < |F|< \infty\}$ of finite sets has the \define{density Ramsey property} if for any set $A\subset \N$ with $\ubdens(A)> 0$, there exists some $X\in\CD$ such that $X\subset A$.
\end{definition}

By a compactness argument, the set $\CD$ 
has the density Ramsey property  if for all $\delta > 0$, there exists 
$N(\CD, \delta)\in\N$ such that for any interval $J\subset\N$ of length at least $N(\CD, \delta)$ and any set $E\subset J$ with $|E|\geq \delta |J|$, there is $X\in\CD$ with $X\subset E$.

In this language, \Szemeredi{}'s theorem is equivalent to the assertion that for every $k \in \N$, the set of all $k$-term arithmetic progressions has the density Ramsey property.

\begin{theorem}
\label{thm_sumset_szemeredi}
If $\CD\subset\{F\subset\N\colon 0 < |F|< \infty\}$ has the density Ramsey property, then for any $A\subset \N$ with $\ubdens(A)> 0$, there exist pairwise disjoint sets 
$B_1, B_2, \ldots\in\CD$ and pairwise disjoint sets $C_1, C_2, \ldots\in\CD$ such that $B_i+C_j\subset A$ for all $i,j\in\N$.  
\end{theorem}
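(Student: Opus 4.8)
The plan is to bootstrap from the one-sided sumset statement by iterating the density Ramsey property inside a carefully chosen F\o lner-type window and extracting a product configuration via a diagonal argument. First I would invoke the compactness reformulation of the density Ramsey property: there is a function $N(\CD,\delta)$ such that every interval $J$ of length $\ge N(\CD,\delta)$ contains a member of $\CD$ inside every subset of relative size $\ge \delta$. Fix $A$ with $\ubdens(A) = \delta_0 > 0$, so there are arbitrarily long intervals in which $A$ has density $\ge \delta_0/2$, say.

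The key construction is recursive. I would build the sets $B_i, C_j$ together with a decreasing sequence of ``density reservoirs''. Start by choosing a very long interval $J_1$ with $|A \cap J_1| \ge (\delta_0/2)|J_1|$, and pick $B_1 \in \CD$, $B_1 \subset A \cap J_1$, using the density Ramsey property. Now I want $C_1 \in \CD$ such that $B_1 + C_1 \subset A$. The natural tool is that, by a pigeonhole/averaging argument over translates (exactly the mechanism behind \cref{thm:MRR} and its refinement \cref{prop_syndeticB+C}), the set $\{ c : (A - c) \supset B_1 \} = \bigcap_{b \in B_1}(A - b)$ has positive upper Banach density whenever $B_1$ is a finite set drawn from $A$ in a sufficiently ``density-correlated'' way — but to make this genuinely work I would instead lean on \cref{thm:MRR}/\cref{thm_BCD} more structurally: first produce \emph{infinite} sets $\tilde B, \tilde C$ with $\tilde B + \tilde C \subset A$, then run the density Ramsey extraction \emph{inside} $\tilde B$ and inside $\tilde C$ along long intervals. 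Concretely, $\tilde B$ and $\tilde C$ are infinite, hence each contains arbitrarily long intervals' worth of elements only in the weak sense of being infinite, so I cannot directly apply $N(\CD,\delta)$ to them; this is the crux of the difficulty and is addressed in the next paragraph.

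To get around the sparsity of $\tilde B$, the right move is to apply \cref{thm_BCD} not to $A$ but to a product-type structure: I would first pass to the set $W$ of all ``good base points'' and observe that the proof of \cref{thm:MRR} actually yields $B + C \subset A$ with $C$ drawn from any prescribed set of positive density along a suitable F\o lner sequence (this is precisely the content of \cref{prop_B+Cindensity}, implication \eqref{item2ofthepropositiononequivalentcontainment}$\Rightarrow$\eqref{item1ofthepropositiononequivalentcontainment}). So: choose a F\o lner sequence $\Phi$ witnessing $\ubdens(A)>0$; by \cref{prop_B+Cindensity} there is an infinite $C_1' \subset \N$ and infinite $B$-reservoir with the one-sided sumset in $A$. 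Then iterate: having fixed $B_1, \ldots, B_i$ and $C_1, \ldots, C_j$ all pairwise disjoint and all in $\CD$, with $B_p + C_q \subset A$ for the pairs constructed so far, consider $A' := \bigcap_{p \le i, q \le j}(A - (\text{relevant shifts}))$; one checks $A'$ still has positive upper Banach density along a refinement of $\Phi$, and then applies the density Ramsey property on a long enough interval disjoint from everything used so far to produce the next $B_{i+1}$ or $C_{j+1}$ inside the reservoir, finitely and with the required containment. The bookkeeping ensuring (a) the reservoirs retain positive density through infinitely many intersections and (b) the newly chosen finite $\CD$-sets land in the correct ever-shrinking target set is the main obstacle; I expect this to require organizing the induction so that at stage $n$ one only imposes finitely many constraints and chooses the interval $J_n$ long enough relative to $N(\CD, \delta_n)$ where $\delta_n$ is an explicit lower bound for the reservoir density surviving to stage $n$ (which one keeps bounded below by a fixed positive constant by using upper Banach density and fresh, far-away intervals each time).

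Finally, once all $B_i$ and $C_j$ are constructed, pairwise disjointness within each family is automatic from the interval-separation in the recursion, and $B_i + C_j \subset A$ holds for every pair because at the stage where the later of the two was chosen, it was drawn from $(A - B_i)$ (resp. $\bigcap_{i'}(A - \text{shift}_{i'})$) precisely to guarantee this. I would close by remarking that the same argument, fed \cref{thm_BCD} instead of \cref{thm:MRR}, yields the analogous statement with $k$ families $B^{(1)}_i, \ldots, B^{(k)}_i \in \CD$ and $B^{(1)}_{i_1} + \cdots + B^{(k)}_{i_k} \subset A$, though we do not pursue this here.
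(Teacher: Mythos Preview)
Your proposal has a genuine gap at exactly the point you flag as ``the main obstacle'': you never supply a mechanism guaranteeing that the reservoir densities stay bounded below through the induction. Saying that you will ``keep bounded below by a fixed positive constant by using upper Banach density and fresh, far-away intervals each time'' does not work, because the quantity you need to control is the density of intersections like $\bigcap_{c\in C_1\cup\cdots\cup C_j}(A-c)$, and this has nothing to do with how far out you choose your interval $J_n$; it depends on the \emph{specific elements} $c$ you have already committed to. In general, finite intersections of shifts of a positive-density set can have arbitrarily small (even zero) density, so without a selection principle for the $B_i$'s and $C_j$'s your recursion may die after finitely many steps.

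The paper's proof supplies two ingredients you are missing. First, it invokes a structural result from the proof of \cref{thm:MRR} (not just its statement, nor \cref{prop_B+Cindensity}) that produces a fixed set $L\subset\N$ and $\varepsilon>0$ with the property that for \emph{every} finite $F\subset L$, the set
\[
\bigcap_{x\in F}(A-x)\cap\bigl\{n:\dens_\Phi\bigl((A-n)\cap L\bigr)>\varepsilon\bigr\}
\]
has positive $\Phi$-density. The $B_i$'s are drawn from $L$, and this is what prevents the $B$-side reservoir from collapsing. Second, for the $C$-side the paper applies Bergelson's Intersectivity Lemma to the sets $(A-n)\cap L$ (which all have $\Phi$-density at least $\varepsilon$) to extract a subsequence along which \emph{all} finite intersections have positive density; only then does one apply the density Ramsey property on suitable intervals to find the $C_j\in\CD$. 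Your sketch gestures at \cref{prop_B+Cindensity} and at pigeonholing, but neither gives you a set like $L$ with this hereditary intersection property, and you never invoke an intersectivity lemma; without these two tools the bookkeeping you describe cannot be carried out.
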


Taking $\CD$ to be the collection of singletons, we obtain \cref{thm:MRR} and taking $\CD$ to be $k$-term arithmetic progressions, we obtain a generalization of \Szemeredi{}'s Theorem. 
The analogous statement also holds for higher order sumsets, meaning for any $k\in\N$, there exist $k$ sequences of disjoint sets $(B_{i,j})_{j\in\N}\in\CD$ for $i=1, \dots, k$ such that the $k$-fold sumsets all lie in $A$. We could also start with two collections $\CD_1$ and $\CD_2$ with the density Ramsey property, and select the sets $B_i$ from $\CD_1$ and $C_i$ from $\CD_2$.  However, to minimize notation, we restrict ourselves to the two-fold sumset and single collection $\CD$.

\begin{proof} 
Let  $A\subset \N$ with $\ubdens(A)> 0$.  By combining the results in~\cite[Section~2]{MRR}, we can find  $\varepsilon > 0$, a F\o lner sequence $\Phi  = (\Phi_N)_{N\in\N}$, and a set $L\subset \N$ such that for every nonempty finite set $F\subset L$, the set
$$
\bigcap_{x\in F}(A-x)\cap\Big\{n\in\N\colon \dens_\Phi\bigl((A-n)\cap L\bigr)> \varepsilon\Big\}
$$
has positive density with respect to $\Phi$. 
Let $F_1\subset F_2\subset \cdots\subset L$ be an exhaustion of $L$ by finite sets. 
For $i\in\N$, define
\begin{equation}
\label{eq:def-Qi}
Q_i = \bigcap_{x\in F_i}(A-x)\cap\Big\{n\in\N\colon \dens_\Phi\bigl((A-n)\cap L\bigr) > \varepsilon\Big\}. 
\end{equation}
Pick $N(\CD, \delta)\in\N$ such that for any interval $J\subset\N$ of length at least $N(\CD, \delta)$ and any set $E\subset J$ with $|E|\geq \delta |J|$, there exists $X\in\CD$ with $X\subset E$. 
Set $\delta_i = \dens_\Phi(Q_i)/2$ and for each $i\in\N$, let $J_i\subset \N$ denote an interval whose length is at least $N(\CD, \varepsilon\delta_i/2)$ and satisfies 
$$|J_i\cap Q_i|\geq \delta_i|J_i|.
$$
Without loss of generality, we can further assume that for each $i\in\N$, we have $\max J_i < \min J_{i+1}$.  
Then by~\eqref{eq:def-Qi}, it follows that 
\begin{enumerate}
\item \label{item:one}
$F_i+(Q_i\cap J_i)\subset A$ for all $i\in\N$.
\item 
If  $n_1< n_2< \cdots$ is an enumeration of $\bigcup_{i\in\N}(J_i\cap Q_i)$, then 
$$\dens_\Phi\bigl((A-n_k)\cap L\bigr) > \varepsilon.$$
\end{enumerate}
Applying Bergelson's Intersectivity Lemma~\cite[Theorem~2.1]{bergelson}, there exists $K\subset\N$ with $\dens(K) = \varepsilon$ such that for all nonempty finite sets $H\subset K$, we have 
\begin{equation}\label{eq:berg-cor}
\ubdens\Bigl(\bigcap_{k\in H}\bigl((A-n_k)\cap L\bigr)\Bigr) > 0.
\end{equation}
Define
\[
R_i = \{n_k\colon k\in K\}\cap J_i\cap Q_i 
\]
and let $I\subset\N$ be an infinite set such that for all $i\in I$, we have 
\[
|R_i|\geq \frac{\varepsilon\delta_i}{2}|J_i|
\]
which exists because $\dens(K) = \varepsilon$ and $|J_i\cap Q_i|\geq \delta_i|J_i|$. Since the length of $J_i$ exceeds $N(\CD, \varepsilon\delta_i/2)$, we can find $X_i\in \CD$ with $X_i\subset R_i$.
Then by property~\ref{item:one} and~\eqref{eq:berg-cor}, we conclude that 
\begin{enumerate}[resume]
\item
\label{item:three} 
$F_i+X_i\subset A$ for all $i\in I$. 
\item \label{item:four} 
For each $i\in I$, we have  
$$\ubdens\Bigl(\bigcap_{x\in X_1\cup\ldots\cup X_i}\bigl((A-x)\cap L\bigr)\Bigr) > 0.
$$
\end{enumerate}
We are now ready to inductively construct the sets $B_1, B_2,\ldots$ and $C_1, C_2, \ldots$.  

First choose $B_1\in\CD$ such that $B_1\subset L$. Choose $i_1\in I$ sufficiently large such that 
$B_1\subset F_{i_1}$, and  define $C_1 = X_{i_1}$.  By Property~\eqref{item:three}, we have that 
$$B_1+C_1\subset A.$$
Assume we have defined the sets $B_1, \ldots, B_n$ and $C_1, \ldots, C_n$ such that 
$B_1, \ldots, B_n\subset L$ and $C_1 = X_{i_1}, \ldots, C_n = X_{i_n}$ for some $i_1 < i_2 < \ldots <  i_n\in I$. 
Then by Property~\eqref{item:four}, there exists $B_{n+1}\in\CD$ such that 
\[
B_{n+1}\subset\bigcap_{x\in X_{i_1}\cup\cdots\cup X_{i_n}}\bigl((A-x)\cap L\bigr).
\]
It follows that $B_{n+1}\subset L$ and for all $1\leq k \leq n$, we have 
\[
B_{n+1}+C_k\subset A. \]
Choose $i_{n+1}\in I$ that is sufficiently large such that  
\[
B_1\cup \cdots \cup B_n\cup B_{n+1}\subset F_{i_{n+1}}
\]
and set $C_{n+1} = X_{i_{n+1}}$.  
Then it follows from Property~\eqref{item:three} that 
for all $1 \leq k \leq n+1$, we have
$$B_k+C_{n+1}\subset A, 
$$
completing the proof. 
\end{proof}

Whilst \cref{thm_sumset_szemeredi} provides a common generalization of \cref{thm:MRR} and \Szemeredi{}'s theorem, it does not include \cref{thm_BBt} as a special case.
This leads to the following question.

\begin{question}
\label{thm_BBt_szemeredi}
If $\CD\subset\{F\subset\N\colon 0 < |F|< \infty\}$ has the density Ramsey property, then is it true that for any $A\subset \N$ with $\ubdens(A)> 0$, there exist $t\in \N$ and disjoint 
$B_1, B_2, \ldots\in\CD$ such that $B_i+B_j\subset A-t$ for all $i\neq j\in\N$?  
\end{question}

Similarly, one can  formulate the analog for the $k$-fold sum of the sets.

\subsection{Sumsets and recurrence}
\label{sec:recurrence-and-sumset}

The quadruple $(X,\mathcal{B},\mu,T)$ is a \define{measure preserving system} if $(X,\mathcal{B},\mu)$ is a probability space, and $T\colon X\to X$ is a measurable and measure preserving map.
As part of his proof of \Szemeredi{}'s theorem, Furstenberg~\cite{Furstenberg-1977} introduced a general method, known as the \define{Correspondence Principle}, for translating a problem about finding configurations in sets of upper Banach density into a question  of recurrence in measure preserving systems.

A set $R\subset\N$ is a \define{set of recurrence}~\cite{Furstenberg-book} if for any measure preserving system $(X,\mathcal{B},\mu,T)$ and any $E\in \mathcal{B}$ with $\mu(E)>0$, there exist infinitely many $n\in R$ with $\mu(E\cap T^{-n}E)>0$.
A set $R\subset\N$ is a \define{set of strong recurrence}~\cite{bergelson} if for any measure preserving system $(X,\mathcal{B},\mu,T)$ and any $E\in \mathcal{B}$ with $\mu(E)>0$ one has
\[
\limsup_{n\in R} \mu(E\cap T^{-n}E)>0.
\]

Note that any set of strong recurrence is a set of recurrence, but the converse does not hold~\cite{forrest-1991}. There are many known examples of sets of  recurrence and strong recurrence (see for example~\cite{Furstenberg-BAMS}).

\begin{theorem}
\label{thm_sumset_recurrence}
Suppose that the set $R\subset\N$ is a set of strong recurrence. Then for any $A\subset\N$ with $\ubdens(A)>0$, there exist  infinite sets $B\subset R$ and $C\subset A$ such that
\[
\{b+c: b\in B,~c\in C,~b<c\}\subset A.
\]
\end{theorem}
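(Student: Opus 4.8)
The plan is to run the Furstenberg correspondence principle together with the ergodic-theoretic machinery behind \cref{thm:MRR} (as in~\cite{Host} or~\cite{KMRR}), but to exploit the hypothesis that $R$ is a set of \emph{strong} recurrence in order to force the set $B$ to live inside $R$. First I would apply the correspondence principle to $A\subset\N$ with $\ubdlos(A)>0$: after passing to a suitable F\o lner sequence $\Phi$ I obtain a measure preserving system $(X,\mathcal B,\mu,T)$, a set $E\in\mathcal B$ with $\mu(E)=\ubdens(A)>0$, and a point $x_0\in X$ such that $\{n\in\N: T^n x_0\in E\}$ has the correct large intersection properties relative to $A$; concretely, $\udens_\Phi\big(\bigcap_{x\in F}(A-x)\big)>0$ whenever $\bigcap_{x\in F}T^{-x}E$ has positive measure. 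The output of the analysis in~\cite[Section~2]{MRR} (or its ergodic incarnations) is a positive constant $\varepsilon>0$ and a set $L\subset\N$ with $\dens_\Phi(L)>0$ such that for every finite $F\subset L$ the set $\bigcap_{x\in F}(A-x)\cap\{n:\dens_\Phi((A-n)\cap L)>\varepsilon\}$ has positive $\Phi$-density — this is exactly the mechanism used in the proof of \cref{thm_sumset_szemeredi}, and I would reuse it verbatim.

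The key new step is the construction of $C\subset A$ and, simultaneously, of $B\subset R$. I would build $C=\{c_1<c_2<\cdots\}$ and $B=\{b_1<b_2<\cdots\}$ by interleaved induction, analogous to the inductive construction in \cref{prop_U2_implies_no_shift_needed} and in the proof of \cref{thm_sumset_szemeredi}. At stage $n$ I will have chosen $b_1<\cdots<b_n\in R$ and $c_1<\cdots<c_n\in A$ with $b_i+c_j\in A$ for all $i\le j\le n$, together with a ``reservoir'' set $L_n\subset L$ of positive $\Phi$-density from which the next elements may be drawn. To pick $b_{n+1}\in R$: the set $E_n:=\{x\in X: \text{$x$ records membership in } L_n\}$ (more precisely the level set arising from the recurrence-weighted averages) has positive measure, so because $R$ is a set of \emph{strong} recurrence, $\limsup_{r\in R}\mu(E_n\cap T^{-r}E_n)>0$; translating back through the correspondence principle this yields $b_{n+1}\in R$, larger than everything chosen so far, with $b_{n+1}+c_j\in A$ for $j\le n$ and with $\dens_\Phi\big((A-b_{n+1})\cap L_n\big)>\varepsilon'$ for a fixed $\varepsilon'>0$. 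Then I shrink the reservoir to $L_{n+1}:=(A-b_{n+1})\cap L_n$, which still has positive $\Phi$-density, and pick $c_{n+1}\in L_{n+1}$ (hence $c_{n+1}\in A$, since $L\subset A$ can be arranged) large enough that $b_i+c_{n+1}\in A$ for all $i\le n+1$ — this last containment being automatic from $c_{n+1}\in L_{n+1}\subset A-b_i$. The ordering $b<c$ is maintained throughout by always choosing the new element larger than all previous ones.

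The main obstacle, and the place where strong recurrence (rather than mere recurrence) is essential, is the selection of $b_{n+1}$: I need infinitely many valid choices in $R$ at \emph{every} stage, and I need the lower bound on $\dens_\Phi\big((A-b_{n+1})\cap L_n\big)$ to be bounded \emph{uniformly away from zero} in $n$, since otherwise the reservoirs could shrink to density zero and the induction would collapse. Strong recurrence provides a $\limsup$ that is positive, giving both infinitely many choices and a uniform quantitative lower bound $\delta_0>0$ depending only on $\mu(E)$ and $\varepsilon$; with mere recurrence one would only get the $\limsup$ along the system being positive for some single return, with no control as the reservoir degrades. A secondary technical point is checking that the level sets $E_n$ really do have measure bounded below by a fixed constant — this follows because $\dens_\Phi(L_n)$ stays above a fixed threshold by construction, which is exactly what the strong-recurrence lower bound guarantees inductively. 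Once these uniform bounds are in place, the construction goes through, and setting $B=\{b_n\}\subset R$, $C=\{c_n\}\subset A$ gives $\{b+c:b\in B,\,c\in C,\,b<c\}\subset A$ as required.
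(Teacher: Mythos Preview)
Your proposal heads in a workable direction but is substantially more complicated than the paper's argument, and some of your reasoning about where strong recurrence enters is confused.

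The paper's proof is short and does \emph{not} use the MRR machinery at all. It runs Furstenberg correspondence to obtain $(X,\mu,T)$ and $E$ with the usual inequality~\eqref{eqn_furstenberg_correspondence_inequ}, applies strong recurrence \emph{once} to $E$ to get $\limsup_{n\in R}\mu(E\cap T^{-n}E)>0$, and then feeds the family $\{E\cap T^{-n}E:n\in R\}$ into Bergelson's intersectivity lemma to extract an infinite $L\subset R$ such that $\bigcap_{n\in F}(E\cap T^{-n}E)$ has positive measure for every finite $F\subset L$. Via correspondence this gives that $A\cap\bigcap_{n\in F}(A-n)$ is infinite for every finite $F\subset L$, and a completely elementary greedy interleaving (choose $b_1\in L$, then $c_1\in A\cap(A-b_1)$ large, then $b_2\in L$ larger, then $c_2\in A\cap(A-b_1)\cap(A-b_2)$, etc.) finishes the proof. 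No reservoir of positive density, no $\varepsilon$, no uniform quantitative control is ever needed.

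By contrast, you bring in the set $L$ and the constant $\varepsilon$ from the proof of \cref{thm_sumset_szemeredi}, neither of which plays any role here, and you then apply strong recurrence repeatedly to a shrinking sequence $E_n$. Two points about this. First, your claim that ``$b_{n+1}+c_j\in A$ for $j\le n$'' needs to be arranged is spurious: since you choose $b_{n+1}>c_j$, the ordered sumset condition $b<c$ means these sums are irrelevant. Second, and more importantly, your insistence that you need a \emph{uniform} lower bound $\delta_0$ on $\dens_\Phi((A-b_{n+1})\cap L_n)$ is wrong: positivity at each finite stage is all you need to continue the induction, so nothing collapses if the densities shrink. Strong recurrence applied to $E_n$ does give infinitely many valid $b_{n+1}$, but it does \emph{not} give any uniform bound depending only on $\mu(E)$ and $\varepsilon$, so the mechanism you identify as ``the place where strong recurrence is essential'' is not the right one. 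In the paper's argument, strong recurrence is used precisely to get a positive $\limsup$, which is exactly the hypothesis of Bergelson's intersectivity lemma; that is where strength beyond mere recurrence is consumed.
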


We note that for a set $R$ with Banach density zero, the conclusion can not be strengthened to finding the unrestricted sumset $\{b+c:b\in B, c\in C\}$, 
and the obstruction to such a configuration is given by \cref{ex_reverse_restricted_sumset_squares_is_false} (see also \cref{prop_B+Cindensity}). 

\begin{proof}
Fix $A\subset\N$ with $\ubdens(A)>0$.
By the Furstenberg correspondence principle (see~\cite{Furstenberg-book}), there exist a measure preserving system $(X,\mathcal{B},\mu,T)$ and a set $E\in\mathcal{B}$ such that $\mu(E)=\ubdens(A)$ and 
\begin{equation}
\label{eqn_furstenberg_correspondence_inequ}    
\ubdens((A-n_1)\cap\ldots\cap (A-n_j))\geq \mu(T^{-n_1}E\cap\ldots\cap T^{-n_j}E)
\end{equation}
for all $j\in\N$ and all $n_1,\ldots,n_j\in\N\cup\{0\}$.
Since $R$ is a set of strong recurrence, the sequence of sets $E_n=E\cap T^{-n}E$ satisfies $\limsup_{n\in R}\mu(E_n)>0$. 
Applying Bergelson's intersectivity lemma~\cite[Corollary~2.4]{bergelson} to the collection $\{E_n:n\in R\}$, there exists an infinite subset $L\subset R$ such that for any finite nonempty set $F\subset L$, the intersection $\bigcap_{n\in F} E_n$ has positive measure.  
Combining this with~\eqref{eqn_furstenberg_correspondence_inequ},  we conclude that for any finite nonempty set $F\subset L$, the intersection
\[
A\cap\biggl(\bigcap_{n\in F} (A-n)\biggr)
\]
has positive upper Banach density. 
In particular, this intersection is infinite and we use it to inductively construct the sets $B$ and $C$.  Namely, let $\ell_1<\ell_2<\ldots$ be an enumeration of $L$ and choose $b_1:=\ell_1$ and then choose $c_1$ to be any element in the intersection $A\cap (A-b_1)$ with $c_1>b_1$. 
Let $b_2\in L$ be any element larger than $c_1$ and then choose $c_2$ to be any element in $A\cap (A-b_1)\cap (A-b_2)$ with $c_2>b_2$. 
Choose $b_3\in L$ larger than $c_2$ and take $c_3$ to be any element in the intersection $A\cap (A-b_1)\cap (A-b_2) \cap (A-b_3)$ with $c_3>b_3$. 
Continuing this procedure, we obtain sets $C\subset A$ and $B\subset R$ with $\{b+c: b\in B,~c\in C,~b<c\}\subset A$.
\end{proof}

Since the set of perfect squares is known to be a set of strong recurrence~\cite{Furstenberg-1977}, \cref{thm_sumset_recurrence} implies the following corollary, lending evidence towards a positive answer to \cref{q_sumset_sarkozy}.

\begin{corollary}
For any $A\subset\N$ with $\ubdens(A)>0$, there exist infinite sets $B\subset \N$ and $C\subset A$ such that
\[
\{b^2+c: b\in B,~c\in C,~b<c\}\subset A.
\]
\end{corollary}

If the set $R\subset\N$ in \cref{thm_sumset_recurrence} 
is only assumed to be infinite instead of a set of strong recurrence, then a similar argument (using Furstenberg Correspondence and Fatou's lemma) shows that for any $A\subset\N$ with $\ubdens(A)>0$, there exist  infinite sets $B\subset R$ and $C\subset\N$ such that
\[
\{b+c: b\in B,~c\in C,~b<c\}\subset A.
\]
The difference in this statement and that of \cref{thm_sumset_recurrence} is that $C$ cannot necessarily be taken from the set $A$.  On the other hand, it is not hard to see that the notion of recurrence is a necessary condition for the stronger conclusion in the theorem.  
 This motivates us to ask how much we can weaken the dynamical assumption on the set $R$ and obtain the same conclusion.  

\begin{question}
\label{q:weak}
Does the conclusion of \cref{thm_sumset_recurrence} hold if we replace the assumption on $R$ being a set of strong recurrence by $R$ being a set of recurrence? 
\end{question}

We can also obtain a higher order extension of \cref{thm_sumset_recurrence}, involving the analog for $k$-fold strong recurrence.  
Given $k\in\N$, a set $R\subset\N$ is a \define{set of strong $k$-recurrence} if for any measure preserving system $(X,\mathcal{B},\mu,T)$ and any $E\in \mathcal{B}$ with $\mu(E)>0$ one has
\[
\limsup_{n\in R} \mu(E\cap T^{-n}E\cap \ldots\cap T^{-kn})>0.
\]

With slight modifications of the proof of \cref{thm_sumset_recurrence}, one can obtain the following generalization of \cref{thm_sumset_recurrence} involving sets of strong $k$-recurrence. This gives a weaker version of \cref{q_sz_waffle_sum}, yielding a different way of combining sumsets and  \Szemeredi{}'s Theorem (for which we can ask the higher order analog of \cref{q:weak}).

\begin{theorem}
\label{thm_restricted_different_sumset_szemeredi}
Suppose $R$ is a set of strong $k$-recurrence. Then for any $A\subset\N$ with $\ubdens(A)>0$ there exist infinite sets $B\subset R$ and  $C\subset \N$ such that
\[
\bigcup_{\ell=0}^k\{\ell b+c: b\in B,~c\in C,~b<c\}\subset A.
\]
\end{theorem}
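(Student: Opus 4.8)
\textbf{Proof plan for \cref{thm_restricted_different_sumset_szemeredi}.}
The plan is to follow the proof of \cref{thm_sumset_recurrence} essentially verbatim, replacing single-shift recurrence at each step by the $k$-fold recurrence furnished by the hypothesis. First I would fix $A\subset\N$ with $\ubdens(A)>0$ and invoke the Furstenberg correspondence principle to obtain a measure preserving system $(X,\mathcal{B},\mu,T)$ and a set $E\in\mathcal{B}$ with $\mu(E)=\ubdens(A)$ and
\[
\ubdens\Bigl(\bigcap_{i=1}^j (A-n_i)\Bigr)\geq \mu\Bigl(\bigcap_{i=1}^j T^{-n_i}E\Bigr)
\]
for all $j\in\N$ and $n_1,\dots,n_j\in\N\cup\{0\}$. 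The key modification is to work with the sets $E_n:=E\cap T^{-n}E\cap T^{-2n}E\cap\cdots\cap T^{-kn}E$ instead of $E\cap T^{-n}E$. Because $R$ is a set of strong $k$-recurrence, $\limsup_{n\in R}\mu(E_n)>0$, so Bergelson's intersectivity lemma~\cite[Corollary~2.4]{bergelson} applied to $\{E_n:n\in R\}$ produces an infinite set $L\subset R$ such that $\bigcap_{n\in F}E_n$ has positive measure for every finite nonempty $F\subset L$.

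Translating back through the correspondence inequality, for every finite nonempty $F\subset L$ the set
\[
A\cap\bigcap_{n\in F}\bigl((A-n)\cap(A-2n)\cap\cdots\cap(A-kn)\bigr)
\]
has positive upper Banach density, hence is infinite. Now I would run the same greedy interleaving construction as in \cref{thm_sumset_recurrence}: enumerate $L=\{\ell_1<\ell_2<\cdots\}$, set $b_1:=\ell_1$, pick $c_1\in A$ with $c_1>b_1$ lying in $A\cap(A-b_1)\cap\cdots\cap(A-kb_1)$; then pick $b_2\in L$ with $b_2>c_1$, pick $c_2>b_2$ in $A\cap\bigcap_{i=1}^{2}\bigcap_{\ell=0}^k(A-\ell b_i)$; and so on. The resulting sets $B=\{b_1,b_2,\dots\}\subset R$ and $C=\{c_1,c_2,\dots\}\subset\N$ satisfy, by construction, $\ell b+c\in A$ whenever $b\in B$, $c\in C$, $b<c$, and $0\le\ell\le k$ (the inequality $b<c$ is forced because each $c_j$ is chosen after and larger than all the $b_i$ used to cut it out, and each later $b_i$ is larger than the previous $c_j$, so $b_i<c_j$ iff $i\le j$), which is exactly $\bigcup_{\ell=0}^k\{\ell b+c:b\in B,\ c\in C,\ b<c\}\subset A$.

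There is no genuine obstacle here beyond bookkeeping: the one point that needs a sentence of care is verifying that the intersectivity lemma applies, i.e.\ that the collection $\{E_n:n\in R\}$ has $\limsup$ of measures bounded below — this is precisely the definition of strong $k$-recurrence — and that, after passing back through the correspondence principle, the intersections used in the greedy step are genuinely infinite (positive upper Banach density suffices). Unlike \cref{thm_sumset_recurrence}, here $C$ is only asserted to be a subset of $\N$ rather than of $A$ with positive density; this is automatic since the construction only ever places the $c_j$ in $A$ as a set, and we make no density claim about $C$, so no extra argument is needed. I would remark that the same scheme would yield $C\subset A$ under a correspondingly stronger hypothesis, but that is not what is claimed.
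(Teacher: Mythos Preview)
Your proposal is correct and is precisely the ``slight modification'' of the proof of \cref{thm_sumset_recurrence} that the paper alludes to: replace $E_n=E\cap T^{-n}E$ by $E_n=\bigcap_{\ell=0}^k T^{-\ell n}E$, invoke strong $k$-recurrence in place of strong recurrence, and run the same intersectivity-plus-greedy argument. There is nothing to add.
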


\section{Sumsets in sets of integers without  density}
\label{sec:more-general-infinite}

\subsection{Sumsets in the primes}
\label{subsection_primes}

The set $\P$ of primes has zero density, and so neither \Szemeredi{}'s theorem nor any of the results in \cref{sec:sumsets_integers} are applicable.  
However, in a major breakthrough, Green and Tao~\cite{Green_Tao08} proved the analog of \Szemeredi{}'s theorem in the primes, showing that $\P$ contains arbitrarily long arithmetic progressions. 
Their proof 
utilizes a transference method, adapting techniques developed for the  study of configurations in sets of positive density to sets of integers that are sparser but sufficiently pseudo-random.
This motivates considering sumset phenomenon in the primes and, more generally, studying which infinite patterns occur in $\P$.

\begin{conjecture}
\label{q:bcprimes}
The set of primes contains a sumset $B+C$ for some infinite sets $B,C\subset\N$.
\end{conjecture}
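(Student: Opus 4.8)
The natural line of attack on \cref{q:bcprimes} is a transference argument in the spirit of Green--Tao~\cite{Green_Tao08}: use the $W$-trick to reduce the problem to a statement about the support of a function dominated by a pseudorandom measure, and then transfer the proof of \cref{thm:MRR} --- or, more conveniently, its ergodic incarnations in~\cite{Host,KMRR} --- to that setting. First I would run the $W$-trick. Fix a slowly growing parameter $w$, put $W=\prod_{p\le w}p$, fix a residue $b$ with $\gcd(b,W)=1$, and set $S=\{n\in\N:Wn+b\in\P\}$. If one can produce infinite sets $B',C'\subset\N$ with $B'+C'\subset S$, then $B:=WB'$ and $C:=WC'+b$ are infinite with $B+C\subset\P$, which settles \cref{q:bcprimes}. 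By the sieve estimates underlying~\cite{Green_Tao08}, an appropriate renormalization $f(n)=\tfrac{\phi(W)}{W}\log(Wn+b)\,\one_\P(Wn+b)$ of $\one_S$ satisfies $0\le f\le\nu$ for a pseudorandom majorant $\nu$ obeying suitable linear-forms and correlation conditions, and has mean bounded below by a positive absolute constant. So it is enough to show: \emph{if $f\ge 0$ is dominated by a pseudorandom measure and has positive mean, then $\{n:f(n)>0\}$ contains a sumset of two infinite sets.}

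Next I would aim for a relative version of the sumset theorem. The proofs of \cref{thm:MRR,thm_BCD}, in the form used above in the proof of \cref{thm_sumset_szemeredi}, reduce the construction of $B$ and $C$ to the following structural fact (cf.~\cite[Section~2]{MRR}): there exist $\varepsilon>0$, a \Folner{} sequence $\Phi$, and a set $L\subset\N$ so that for every finite $F\subset L$ the set
\[
\bigcap_{x\in F}(S-x)\ \cap\ \bigl\{\,n:\ \udens_\Phi\bigl((S-n)\cap L\bigr)>\varepsilon\,\bigr\}
\]
has positive density along $\Phi$; granting this, one builds $B'=\{b_1<b_2<\cdots\}$ and $C'=\{c_1<c_2<\cdots\}$ by an infinite back-and-forth induction, alternately choosing $c_{k+1}$ inside a set of the displayed type (with $F=\{b_1,\dots,b_k\}$) and $b_{k+1}$ inside $\bigcap_{j\le k}(S-c_j)$. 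To establish the displayed structural fact for $S=\{n:f(n)>0\}$, I would combine a Furstenberg-type correspondence principle applied to the \emph{weight} $f$ --- so that the limiting object is a genuine measure preserving system $(X,\mathcal B,\mu,T)$ carrying a function $0\le F\le 1$ with $\int F\,d\mu\ge\delta$, using that $\nu\to 1$ on average --- with the relative, pseudorandom-background analogues of the correlation estimates that drive~\cite{Host,KMRR}, so as to recover the weak-mixing and factor structure those arguments need. With that in hand, the inductive construction goes through as before.

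The hard part will be the infinitary nature of the transference. The Green--Tao principle is engineered for a \emph{fixed, finite} family of linear forms, whereas the sumset theorem is genuinely infinitary: the back-and-forth construction probes correlations of $f$ against growing families of shifts, and the ergodic proofs of~\cite{Host,KMRR} exploit structure (the Kronecker factor, weak mixing relative to it, and idempotent/nil phenomena) that need not be visible through a majorant controlled by only boundedly many linear-forms and correlation conditions. Moreover one cannot simply iterate a finite transference scale by scale: producing, for each $k$, finite sets $B_k+C_k\subset S$ of size $k$ says nothing about a single infinite pair, since these finite configurations at different scales need not nest. Thus the genuine new ingredient required is an \emph{infinitary} transference/correspondence principle for the primes --- one yielding a limiting measure preserving system that reflects enough of the structure exploited in the Host--KMRR machinery --- and producing that principle is where I expect the real work to lie. (Even the weaker ordered variant, asking only for $\{b+c:b\in B,\,c\in C,\,b<c\}\subset\P$ with $B,C$ infinite, appears to be open, and it may be worth attempting first.)
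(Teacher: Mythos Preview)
This statement is an open \emph{conjecture} in the paper; there is no proof to compare against. The paper records only that \cref{q:bcprimes} is known conditionally on the Dickson--Hardy--Littlewood prime tuples conjecture (Granville~\cite{Granville90}; see also Appendix~\ref{appendixDHL} for a related conditional argument), and that various partial unconditional results exist, but the full statement remains unresolved.

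Your proposal is accordingly not a proof but a research outline, and you say so yourself: the ``hard part'' you isolate --- an infinitary transference/correspondence principle for the primes that retains enough of the Host/KMRR factor structure to run the back-and-forth construction --- is exactly the missing ingredient, and no one currently knows how to supply it. The Green--Tao machinery controls a fixed finite family of linear forms, whereas the inductive construction of $B'$ and $C'$ requires positivity of intersections $\bigcap_{x\in F}(S-x)$ for $F$ of \emph{unbounded} size; transference gives no uniform lower bound as $|F|\to\infty$, so the step ``with that in hand, the inductive construction goes through as before'' is precisely where the argument breaks. One factual correction to your final parenthetical: the ordered variant $\{b+c:b\in B,\,c\in C,\,b<c\}\subset\P$ with $B,C$ infinite is \emph{not} open --- it was proved unconditionally by Tao and Ziegler~\cite{TZ23}, as the paper notes immediately after stating \cref{q:bcprimes}. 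Their method is an adaptation of the Maynard sieve rather than transference, and it is unclear how to remove the ordering restriction from it.
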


Granville~\cite{Granville90} proved \cref{q:bcprimes} conditionally on the prime tuples conjecture of Dickson, Hardy and Littlewood~\cite{Dickson1904, Hardy-Littlewood23} (see \cref{conj_hardylittlewood} in Appendix~\ref{appendixDHL} for the statement).
The fact that $\P$ contains a sumset $B+C$ where $B$ is infinite and $C$ has two elements is equivalent to Zhang's theorem~\cite{Zhang14} on bounded gaps in primes. The subsequent work of Maynard~\cite{Maynard15} and Polymath 8~\cite{Polymath14} implies that for any size $k$, the primes contain a sumset $B+C$ for some infinite set $B\subset\N$ and some set $C\subset\N$ with at least $k$ elements.
Recently, Tao and Ziegler~\cite{TZ23} adapted Maynard's sieve to show that there are infinite sets $B,C \subset \N$ such that the ordered sumset 
\[
\{ b + c : b \in B,~c \in C,~b < c \}
\]
is contained in the primes.
They further show  that, conditionally on  the Dickson-Hardy-Littlewood conjecture, $\P-1$ contains a sumset of the form $\{b_1+b_2: b_1,b_2\in B,~b_1\neq b_2\}$ for some infinite set $B\subset\N$.

With the goal of understanding which infinite patterns are contained in the primes, and heeding the examples discussed in  \cref{sec:sumsets_integers}, we first observe that no shift of $\P$ contains an IP-set.
Recall the notation $\FS$ defined in~\eqref{def:FS}.

\begin{proposition}
\label{prop_noprimeshiftedIP}
There is no sequence $(x_n)_{n=1}^\infty$ and shift $t\in\Z$ such that
\begin{equation}\label{eq_primestraus}
    \P+t\supseteq \FS\big(\{x_n:n\in\N\}\big).
\end{equation}
\end{proposition}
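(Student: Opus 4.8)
The plan is to use the fact that an IP-set $\FS(\{x_n\})$ generated by an infinite sequence is extremely rich divisibility-wise, while $\P+t$ contains at most one multiple of any prime $p > |t|$ (since $\P + t$ meets $p\Z$ only in the single point $p+t$ when $p \nmid t$, or not at all when $p \mid t$). So the strategy is: from $\FS(\{x_n\})$ extract infinitely many elements all divisible by some fixed prime $p$, and derive a contradiction with the structure of $\P + t$.

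First I would reduce to finding a single prime $p$ that divides infinitely many elements of $\FS(\{x_n\})$; in fact one can do better and make the divisibility as strong as one likes. The cleanest route is an idempotent/Hindman-style argument: pass to a sub-IP-set. Concretely, given the sequence $(x_n)$, consider the partial sums modulo a fixed $p$. By pigeonhole and the Hindman/Galvin–Glazer machinery (or just by a direct Ramsey-type extraction on finite sums), one can find an infinite subsequence $(x_{n_k})$ and a residue class such that $\FS(\{x_{n_k}\})$ lies entirely in a single congruence class mod $p$; choosing that class to be $0$ is possible once we also allow ourselves to translate the generators, but the simplest self-contained version is: take $p = 2$ and note that among $x_1, x_1+x_2, \dots$ infinitely many are even, or more robustly, replace $(x_n)$ by the sequence of block sums $y_j = x_{m_j+1} + \cdots + x_{m_{j+1}}$ so that each $y_j$ is divisible by $j!$; then $\FS(\{y_j\}_{j \ge N})$ consists of integers all divisible by $N!$, for every $N$. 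This "telescoping into highly divisible blocks" step is the technical heart, and it is elementary once set up correctly.

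Second, with such a highly divisible sub-IP-set $\FS(\{y_j\}_{j\ge N}) \subseteq \FS(\{x_n\})$, suppose toward a contradiction that $\FS(\{x_n\}) \subseteq \P + t$. Fix any $N$ with $N! > |t|$. Then every element $s \in \FS(\{y_j\}_{j \ge N})$ satisfies $s \equiv 0 \pmod{N!}$ and $s - t \in \P$, so $s - t$ is a prime divisible by no prime below $N$ except possibly... wait, more directly: $s \equiv 0 \pmod{q}$ for every prime $q \le N$, hence $s - t \equiv -t \pmod q$. If for some prime $q \le N$ we have $q \nmid t$, then $s - t$ is never divisible by $q$, which is fine; the real contradiction comes from taking $q$ a prime with $q \mid s$ and $q > |t|$: then $s - t \not\equiv 0 \pmod q$, so that's also consistent. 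The sharp contradiction is instead: among the infinitely many $s \in \FS(\{y_j\}_{j\ge N})$ there are two, say $s < s'$, with $s' - s$ also in $\FS(\{y_j\}_{j\ge N})$ (take $s = y_{j_1}+\cdots$, $s' = s + y_{j_2}$ with $j_2$ larger), hence $s', s, s'-s$ are all $\equiv 0 \pmod{N!}$ and all lie in $\P + t$; but then $(s'-t) + (s - t) \equiv -2t$ while $((s'-s) - t) \equiv -t \pmod{N!}$, and since $s'-t$, $s-t$, $(s'-s)-t$ are primes exceeding $N$, each is coprime to $N!$, forcing $-2t$ and $-t$ both coprime to $N!$; iterating or choosing $N$ large (so that some prime $q \le N$ divides $t$ if $t \ne 0$, giving an immediate contradiction, or $q=2$ forcing $-t$ even hence $2 \mid t$, etc.) pins down $t$ to finitely many cases and finally to a direct contradiction.

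Let me state the argument I actually expect to write, which avoids the casework: take $N$ large enough that $N!$ has a prime factor $q$ with $q > |t|$ and $q \ne$ anything dividing... simply: pick any prime $q$ with $q > |t|$ and arrange (by the block construction) an infinite sub-IP-set $\FS(\{z_i\})$ with every $z_i$ divisible by $q$. Then every $s \in \FS(\{z_i\})$ is divisible by $q$, so $s - t \equiv -t \not\equiv 0 \pmod q$; in particular $s - t \ne q$ is... hmm, that alone is not a contradiction. The contradiction is that $\FS(\{z_i\})$ contains arbitrarily large multiples of $q$, and also $s, 2$-fold sums, etc., all in $\P+t$: but a set containing infinitely many multiples of $q$, shifted by $t$, cannot be inside $\P$ unless $q \mid t$ — NO. I think the correct clean statement: $\FS(\{z_i\})$ contains infinitely many elements divisible by $q$, say $q a_1 < q a_2 < \cdots$, all of the form (prime) $+ t$. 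That is genuinely possible. So the right invariant to use is not "divisible by a single prime" but the full structure. The main obstacle, and where I'd focus care, is exactly this: choosing the right elementary reason an IP-set cannot avoid every residue class — the correct fact is that $\FS(\{x_n\})$ contains, for every $p$, an infinite subset lying in $p\Z$ (because the partial-sum sequence mod $p$ is eventually periodic on the idempotent ultrafilter, so some finite-sum block is $\equiv 0$), and more strongly, for every $P$ there is an infinite sub-IP-set inside $P!\,\Z$; then inside $\P + t$ this forces, fixing such a block set $S \subseteq P!\,\Z \cap (\P+t)$ with $P! > |t|$, that every $s \in S$ has $s - t$ a prime $> P!$, yet $s \equiv 0$, so $\gcd(s - t, P!) = \gcd(t, P!) = |t|$ if $|t| \mid P!$; choosing $P$ so that $|t| < P!$ and $|t| \ne 1$ is impossible to reconcile with $\gcd(s-t,P!)$ being $1$ for a prime $s - t > P!$ — precisely, a prime $> P!$ is coprime to $P!$, so $\gcd(s - t, P!) = 1$, hence $\gcd(t, P!) = 1$, hence $|t| = 1$; and the case $|t| = 1$ (i.e. $\P - 1$ or $\P + 1$) is killed by noting $\P \pm 1$ is all even past $2$, so cannot contain an IP-set whose generators can be taken odd, or directly: $S \subseteq 6\Z$ (take $P = 3$) with $S \subseteq \P \pm 1$ gives $s \mp 1$ prime, $s \equiv 0 \bmod 6$, so $s \mp 1 \equiv \mp 1 \bmod 6$ which is fine — ugh, still consistent. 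I would resolve this last edge case $|t|=1$ separately using that $\P+1$ and $\P-1$ consist of even numbers (besides $3$), while an IP-set generated by an infinite sequence contains odd numbers arbitrarily large unless all sufficiently-long block sums are even, which one can again prevent by the block construction (choose blocks of odd length and odd total, or simply: if $\FS(\{x_n\}) \subseteq 2\Z$ then all $x_n$ are even, write $x_n = 2x_n'$, and $\FS(\{x_n'\}) \subseteq \tfrac{1}{2}(\P + t)$, reducing to a set with far fewer primes — iterate to get a contradiction with prime density).

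In summary: (1) extract a sub-IP-set with all elements divisible by $P!$ for a suitable $P$ with $P! > |t|+1$; (2) observe each such element minus $t$ is a prime exceeding $P!$, hence coprime to $P!$; (3) conclude $\gcd(t, P!) = 1$, so $|t| \le 1$; (4) dispatch $|t| \in \{0, 1\}$ by a parity/density argument. The main obstacle I anticipate is step (1) — getting the "highly divisible sub-IP-set" cleanly — and the edge-case bookkeeping in step (4); everything else is routine.
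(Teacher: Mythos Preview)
Your steps (1)--(3) are sound: the block-sum trick does produce a sub-IP-set in $P!\,\Z$, and the coprimality argument then forces $\gcd(t,P!)=1$ for every $P$, which eliminates every $t$ except $t=\pm1$ (and $t=0$, since $\gcd(0,P!)=P!$). But step (4) has a genuine gap. Neither parity nor density finishes the case $|t|=1$: IP-sets generated by even integers lie entirely in $2\Z$, so ``$\P\pm1$ is eventually even'' is no obstruction; and IP-sets can have density zero, so the sparsity of $\tfrac12(\P\pm1)$ is irrelevant. Your ``iterate'' suggestion also fails, because $\tfrac12(\P+1)$ is not of the form $\P+t'$ and there is no recursion to run. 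You correctly flagged step (4) as the worry, and that is exactly where the argument breaks.

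The paper's proof is a single paragraph, uniform in $t$, and avoids any case analysis or divisibility preprocessing. Assume $\FS(\{x_n\})\subseteq\P+t$; then each $p_n:=x_n-t$ is prime. By pigeonhole choose $F\subset\{2,3,\dots\}$ with $|F|=p_1$ such that all $p_i$ with $i\in F$ share a residue class modulo $p_1$. Then $q:=x_1+\sum_{i\in F}x_i\in\FS(\{x_n\})$, so $q-t$ is prime; but $q\equiv t\pmod{p_1}$ (since $\sum_{i\in F}p_i\equiv0$ and $|F|\cdot t=p_1 t\equiv0$), so $q-t$ is a multiple of $p_1$ strictly larger than $p_1$, contradiction. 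This is precisely the idea you were circling---manufacture a finite sum congruent to $t$ modulo a prime drawn from the set itself---but executed directly with $p_1=x_1-t$ rather than via $P!$. Incidentally, the same pigeonhole applied inside your sub-IP-set (using the prime $y_1-t$) would close your gap for $|t|=1$, but at that point you have reinvented the paper's argument and the $P!$ detour was unnecessary.
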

\begin{proof}
Suppose, for the sake of a contradiction, that $t\in\Z$ and $(x_n)_{n=1}^\infty$ satisfy~\eqref{eq_primestraus}.
Then $p_n:=x_n-t$ is prime for every $n\in\N$. 
Find a set of indices $F\subset\N$ with size $|F|=p_1$ and $\min F>1$, such that every prime $p_i$ with $i\in F$ has the same 
congruence class $\bmod\,p_1$.
Then on the one hand 
$$q:=(p_1+t)+\sum_{i\in F}(p_i+t)\in\P+t$$
by assumption. 
On the other hand, $q\equiv t\bmod p_1$, so $q-t$ is both a prime and a multiple of $p_1$, a contradiction.
\end{proof}
For comparison, it follows from~\cite[Example 9]{Green_Tao10} that for any $k\in\N$ the set $\P-1$ contains $\FS(\{x_1,\ldots,x_k\})$ for some set $x_1,\ldots,x_k\in\N$ (and the same holds for $\P+1$).

Inspired by \cref{q:bbtbbbtbbbbt}, we make the following conjecture, and would be interested to know which other configurations (for example, those in \cref{q_sumset_sarkozy}) can be found in $\P - 1$.

\begin{conjecture}\label{conjecture_primestrueHindman}
    For every $k\in\N$, there exists an infinite set $B\subset\N$ such that 
    $$\P-1\supset \left\{ \sum_{n \in F}  n :   F \subset B\textup{ with } 0< |F| \leq k \right\}.$$
The analogous statement holds with $\P-1$ replaced by $\P+1$.
\end{conjecture}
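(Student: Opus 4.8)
The plan is to establish \cref{conjecture_primestrueHindman} conditionally on the Dickson--Hardy--Littlewood prime $k$-tuples conjecture (\cref{conj_hardylittlewood}), extending to all $k$ the conditional $k=2$ argument of Tao and Ziegler~\cite{TZ23}. An unconditional route through transference --- passing to a pseudorandom majorant of $\P-1$ and invoking a density statement of the form \cref{q:bbtbbbtbbbbt} --- is currently blocked, since \cref{q:bbtbbbtbbbbt} is open for $k\geq 3$; so the argument proceeds by sieve methods, and the reliance on the prime tuples conjecture appears unavoidable because the number of simultaneous ``is prime'' constraints grows with the construction. We treat the $\P-1$ statement; the $\P+1$ statement is identical with every $+1$ replaced by $-1$.

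I would build $B=\{b_1<b_2<\cdots\}$ greedily. Suppose $b_1,\dots,b_m$ have been chosen so that every subset sum of size at most $k$ lies in $\P-1$. Write $\mathcal S_m$ for the set of subset sums of $\{b_1,\dots,b_m\}$ of size at most $k-1$ (including the empty sum $0$); then choosing $b_{m+1}$ amounts to finding one further integer $x$, in a prescribed residue class modulo a modulus $q_{m+1}$ of our choosing, with $x+s+1$ prime for every $s\in\mathcal S_m$ --- that is, a value of $x$ along which the tuple $\{x+s+1:s\in\mathcal S_m\}$ consists entirely of primes. By the prime tuples conjecture this holds for infinitely many such $x$ as soon as the tuple, together with the congruence condition, has positive singular series; concretely it suffices that (i) $\mathcal S_m$ is admissible and (ii) the residue class for $x$ modulo $q_{m+1}$ makes no form $x+s+1$ identically $0$ modulo a prime dividing $q_{m+1}$. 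Thus the construction reduces to choosing the $b_i$ --- and their residues modulo suitable moduli $q_i$ --- so that admissibility of $\mathcal S_m$ is preserved at every stage while leaving enough room for (ii). Since $\P-1$ contains no shifted IP-set (\cref{prop_noprimeshiftedIP}) one cannot take $q_{m+1}$ and the residue class independent of $k$; but for each fixed $k$ the number of constraints at stage $m$, namely $|\mathcal S_m|\asymp m^{k-1}$, is only of polynomial size, which is what makes the bookkeeping feasible at all.

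To keep $\mathcal S_m$ admissible I would force $b_i\equiv 0$ modulo a rapidly growing primorial $q_i=\prod_{p\le R_i}p$ with, say, $R_i\asymp i^{k-1}$: then modulo any prime $p$ all but the first $w_p\asymp p^{1/(k-1)}$ of the $b_i$ vanish, so $\mathcal S_m\bmod p$ consists of at most $\asymp w_p^{k-1}/(k-1)!\ll p$ residues and hence misses a residue class modulo every prime, with the finitely many genuinely small primes handled by an extra fixed congruence $b_i\equiv 0\pmod{\prod_{p\le P_0}p}$. This reduces everything to the following delicate point: for the primes $p$ dividing $q_{m+1}$ the residue of $b_{m+1}$ modulo $p$ is already pinned to $0$, so condition (ii) becomes the requirement that $-1\notin\mathcal S_m\bmod p$, and one must arrange the residues of the finitely many ``active'' generators modulo each such $p$ so that this holds at every stage. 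This is exactly where $k\geq 3$ goes beyond \cite{TZ23}: in the $k=2$ case the active generators enter $\mathcal S_m$ only linearly, whereas for $k\geq 3$ one must control their $\leq(k-1)$-fold subset sums, and keeping this consistent simultaneously across all stages, all primes, and the prime-value constraints at each stage is the main obstacle I expect to confront. Granting a clean version of this bookkeeping, the greedy construction never terminates and yields the desired infinite $B$; the argument for $\P+1$ is verbatim.
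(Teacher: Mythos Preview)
Your high-level plan --- greedy construction of $B$, at each step invoking the prime tuples conjecture with a congruence constraint --- is exactly the paper's strategy, and you have correctly isolated the crux: one must simultaneously keep the tuple $\{1+s:s\in\mathcal S_m\}$ admissible and make the congruence class of $b_{m+1}$ compatible with it. The paper also needs, and proves, the auxiliary fact that the Dickson--Hardy--Littlewood conjecture implies the version with an added congruence restriction (\cref{lemma_DHLinprogressions}); you take this for granted, but it does require a short argument.

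Where your proposal falls short is precisely at the point you flag. Forcing $b_i\equiv 0\pmod{q_i}$ with $q_i=\prod_{p\le R_i}p$ does give admissibility, but it freezes the residue of $b_{m+1}$ modulo every $p\le R_{m+1}$ to $0$, so condition (ii) becomes $-1\notin\mathcal S_m\pmod p$. Now $\mathcal S_m\bmod p$ is determined by the ``active'' generators $b_1,\dots,b_{w_p-1}$, and these were chosen at earlier stages with \emph{no} constraint modulo $p$ (since $p>R_i$ for $i<w_p$). You therefore have no way to ``arrange the residues of the finitely many active generators'' after the fact; they are already fixed, essentially arbitrarily, before $p$ ever enters the construction. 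As written, the scheme breaks here, and this is not merely bookkeeping.

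The paper's resolution is to replace the rigid residue $0$ by a prime-dependent residue $x_p$, chosen \emph{at the first stage $m$ at which $p$ becomes relevant}. At that moment $b(1),\dots,b(m-1)$ are already known, and one picks $x_p\pmod p$ avoiding the at most $k\cdot 2^{m-1}<p$ residues that would make some $1+\ell x_p+\sum_{i\in F}b(i)$ vanish mod $p$ (see \eqref{eq_proof_BBtprimes4b}, \eqref{eq_proof_BBtprimes5b}); thereafter one imposes $b(n)\equiv x_p\pmod p$ for all later $n$. This single twist --- deferring the choice of residue until the relevant data exist --- is what makes your condition (ii) satisfiable while preserving admissibility, and it is the idea your proposal is missing.
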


Observe that the shift in \cref{conjecture_primestrueHindman} is fixed independent of $k$, and any shift other than $\pm1$ leads to a false statement.
This differs from the case of sets with positive density, where the Straus example shows that the shift must be allowed to depend on $k$.
\cref{conjecture_primestrueHindman} can be proved conditionally on the Dickson-Hardy-Littlewood conjecture following similar arguments to those of Granville~\cite{Granville90} and of Tao and Ziegler~\cite{TZ23}. We present this argument in Appendix~\ref{appendixDHL}.

In view of the recent result in~\cite{TZ23} on ordered sumsets inside the primes, it makes sense to wonder whether the analogous ordered version of \cref{conjecture_primestrueHindman} can be proved unconditionally. 
To be concrete, we explicitly formulate the first two cases which are  open.

\begin{conjecture}\label{conjecture_orderedprimesBCD}
    There exist infinite sets $B,C,D\subset\N$ such that 
    $$\P\supset \big\{b+c+d:b\in B,c\in C, d\in D;\, b<c<d \big\}.$$
\end{conjecture}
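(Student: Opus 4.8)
We outline the strategy we would pursue for \cref{conjecture_orderedprimesBCD}, following the sieve-theoretic method of Tao and Ziegler~\cite{TZ23} and pushing it one summand further. Recall the shape of their argument for the two-set ordered case $B+_{<}C\subset\P$: fix a large admissible tuple $\mathcal H=\{h_1<\dots<h_k\}$ and apply the Maynard--Tao sieve~\cite{Maynard15} to produce, in each dyadic interval, $\gg x/(\log x)^{k}$ integers $n$ for which at least $m$ of $n+h_1,\dots,n+h_k$ are prime, where $m\to\infty$ as $k\to\infty$; then pigeonhole over the finitely many possible ``prime patterns'' to fix a single $m$-element set $I\subset\{1,\dots,k\}$ together with an infinite set $C'$ of good $n$ with $n+h_i\in\P$ for all $i\in I$, so that $\{h_i:i\in I\}+_{<}C'\subset\P$; finally iterate, re-running the sieve compatibly with the earlier stages, to grow the $h$-side from finite to infinite while keeping the other side infinite. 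This sieve step is exactly what replaces the Dickson--Hardy--Littlewood input of Granville's conditional argument~\cite{Granville90} and of the conditional proof in Appendix~\ref{appendixDHL}, which would otherwise be needed to place an \emph{entire} admissible tuple inside $\P$ at once.

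The plan for three sets is to let the largest summand $d$ play the role of the sieve variable $n$ — it is the only one of $b,c,d$ that can range over an infinite sieve-produced set while respecting $b<c<d$ — so that the relevant ``shifts'' are the pairwise sums $b+c$. Concretely: choose finite sets $B_0,C_0$ with $\max B_0<\min C_0$, arrange that $\mathcal H=\{b+c:b\in B_0,\,c\in C_0\}$ is admissible (this constrains $B_0,C_0$ — e.g.\ forces them into prescribed residue classes modulo a primorial — and must be checked), run Maynard--Tao, and pigeonhole over $n$ to obtain an infinite set $D'$ and a subset $S\subset B_0\times C_0$ of shifts with $b+c+d\in\P$ for every $(b,c)\in S$ and every $d\in D'$. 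To conclude $B_1+_{<}C_1+_{<}D'\subset\P$ one needs $S$ to contain a combinatorial rectangle $B_1\times C_1$ with $|B_1|,|C_1|$ large; one then iterates, enlarging $B_0,C_0$ and re-sieving in the spirit of~\cite{TZ23}, to send $|B_1|,|C_1|\to\infty$ and produce infinite $B,C,D$.

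The hard part is precisely the extraction of the rectangle $B_1\times C_1$ from $S$. The Maynard--Tao sieve produces only $m=\Theta(\log|\mathcal H|)=\Theta(\log(|B_0|\,|C_0|))$ primes in the tuple, so $|S|$ is merely logarithmic in the size of the grid $B_0\times C_0$; worse, a set of that size — even with an improved bound — need not contain a $2\times 2$ rectangle at all (it could sit in a single ``row''), so there is no soft combinatorial reason for a large rectangle to appear. Getting past this seems to demand a genuinely new ``bilinear'' variant of the Maynard--Tao sieve: a weight attached to $b+c+n$, rather than to $n$ alone, that forces the good shifts to organise into a grid, i.e.\ yields $B_1\subset B_0$, $C_1\subset C_0$ with $|B_1|,|C_1|\to\infty$ and a positive-proportion set of $n$ such that $b+c+n\in\P$ for all $b\in B_1,\ c\in C_1$. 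Establishing such a sumset-patterned prime-production statement is essentially equivalent to \cref{conjecture_orderedprimesBCD} itself, and it is this, rather than the iteration, the admissibility bookkeeping, or the ordering constraint — all routine adaptations of known techniques — that I expect to be the true content of the problem. One should also bear in mind that no such rectangle may be forceable, in which case a different construction (for instance building $B$, $C$, $D$ simultaneously along matched lacunary scales) would be required.
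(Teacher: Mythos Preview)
The paper does not prove \cref{conjecture_orderedprimesBCD}; it is presented explicitly as an open problem, one of ``the first two cases which are open'' of the ordered analog of \cref{conjecture_primestrueHindman}. There is therefore no proof in the paper to compare your proposal against.

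Your write-up is not a proof either, and you say so yourself: the rectangle-extraction step is the whole point. You correctly identify that the Maynard--Tao sieve applied to the admissible tuple $\mathcal H=\{b+c:b\in B_0,\,c\in C_0\}$ yields only $m=\Theta(\log|\mathcal H|)$ simultaneous primes per good $n$, so after pigeonholing the surviving shift set $S\subset B_0\times C_0$ has size logarithmic in the grid and carries no a priori rectangular structure. As you note, forcing $S$ to contain a growing product $B_1\times C_1$ would require a genuinely new bilinear sieve input, and obtaining that input is tantamount to the conjecture itself. So what you have written is an honest and accurate diagnosis of why the Tao--Ziegler argument does not extend routinely to three summands, not a proof or a proof sketch. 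That diagnosis matches the paper's implicit assessment in listing the statement as open immediately after citing~\cite{TZ23}.
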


\begin{conjecture}\label{conjecture_orderedprimesCBC}
    There exist infinite sets $B\subset\N$ and $C\subset\P-1$ such that 
    $$\P\supset \big\{b+c:b\in B,c\in C;\, b<c\big\}.$$
\end{conjecture}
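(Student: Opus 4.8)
The plan is to recast \cref{conjecture_orderedprimesCBC} as a statement about ordered prime tuples and then adapt the Maynard-sieve construction of Tao and Ziegler, the new feature being that one of the two infinite sets must be forced inside $\P$. It suffices to construct an infinite set of primes $Q=\{q_1<q_2<\cdots\}\subset\P$ and an infinite set $D=\{d_1<d_2<\cdots\}\subset\N\cup\{0\}$, interleaved with rapid growth $d_1\ll q_1\ll d_2\ll q_2\ll\cdots$, such that $q_i+d_{i'}\in\P$ for all $i'\le i$. Writing $C:=Q-1\subset\P-1$ and $B:=D+1\subset\N$, we then get $b+c=q_i+d_{i'}\in\P$ for every pair $b=d_{i'}+1\in B$ and $c=q_i-1\in C$ with $b<c$, since the rapid growth makes the ordering constraint $b<c$ (that is, $q_i>d_{i'}+2$) hold exactly when $i'\le i$.

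The engine is the Maynard sieve in the strong form used by Tao and Ziegler: for every long admissible tuple $\mathcal{H}=\{h_1,\ldots,h_k\}$ there are infinitely many $n$ for which at least $m=m(k)$ of the $n+h_j$ are prime, with $m\to\infty$ as $k\to\infty$, and pigeonholing over these $n$ isolates an infinite family sharing a common ``prime pattern'' $S\subset\{1,\ldots,k\}$, so that $n+h_j\in\P$ for all $n$ in the family and all $j\in S$. The observation that drives the adaptation is that, after fixing any single index $i_0\in S$, the set $\{\,n+h_{i_0}:n\text{ in the family}\,\}$ is an infinite set of primes and $\{\,h_j-h_{i_0}:j\in S\,\}$ is a set of shifts with the property that prime-plus-shift is prime; so already a single stage yields ordered sumsets $Q+D\subset\P$ with $Q$ consisting of primes and $D$ of any prescribed finite size. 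What remains is the genuinely iterative part, carried out unconditionally by Tao and Ziegler for ordinary ordered sumsets, of stitching the finite stages together so that $D$ becomes infinite while $Q$ stays infinite and inside $\P$.

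Concretely, one would build $D$ and $Q$ alternately, stage by stage, maintaining after the $j$-th stage that $q_i+d_{i'}\in\P$ for all $i'\le i\le j$, that the ambient admissible tuples remain admissible, and---this is the extra bookkeeping relative to Tao--Ziegler---that the normalising index $i_0$ chosen at stage $j$ is compatible with the one chosen at stage $j-1$, so that the shift set produced at stage $j$ contains the one produced at stage $j-1$. The rapid growth of $Q$ relative to $D$ then guarantees, as above, that no pair lying outside the controlled family is ever required to sum to a prime. The main obstacle---and the reason this does not follow from the Tao--Ziegler theorem used as a black box---is precisely this coherence: their iteration need not produce nested prime patterns once a normalising coordinate is fixed, so one has to revisit its internals, re-verifying in particular that the relevant sieve sums stay positive at every stage (for which Bombieri--Vinogradov should again suffice, as in their work), rather than quoting the finished statement.

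As a reality check, the difficulty disappears under the Dickson--Hardy--Littlewood conjecture (\cref{conj_hardylittlewood}): one builds $B=\{b_1<b_2<\cdots\}$ and $C=\{c_1<c_2<\cdots\}$ greedily, choosing $b_i\equiv 1\pmod{p_1p_2\cdots p_i}$ so that every tuple $\{1,b_1,\ldots,b_j\}$ is admissible, invoking Dickson--Hardy--Littlewood at each stage to realise $\{1,b_1,\ldots,b_j\}$ simultaneously in the primes and thereby obtain a valid $c_j>b_j$ with $c_j+1\in\P$, and noting that the subsequent choice of $b_{j+1}>c_j$ is otherwise unconstrained because $b_{j+1}$ needs only be compatible with the \emph{later} elements of $C$. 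This is essentially the argument of Appendix~\ref{appendixDHL} for \cref{conjecture_primestrueHindman}, and it suggests that the unconditional statement is within reach of a careful reworking of the Tao--Ziegler sieve, though we do not see how to avoid touching its internals.
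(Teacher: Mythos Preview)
The statement you attempt is \cref{conjecture_orderedprimesCBC}, which the paper records as an \emph{open conjecture}; there is no proof in the paper to compare against. The paper remarks, immediately after the conjecture, that the variant with $B\subset\P-1$ rather than $C\subset\P-1$ follows from Tao--Ziegler together with the existence of an infinite admissible set inside $\P-1$. The asymmetry between constraining the smaller summand and the larger one is precisely the point, and your proposal engages with it head-on.

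Your single-stage observation is correct and worth isolating: running Maynard's sieve on a long admissible tuple, pigeonholing a common prime pattern $S$, and normalising by some $i_0\in S$ produces an infinite set $Q=\{n+h_{i_0}:n\text{ in the family}\}\subset\P$ and a finite shift set $D=\{h_j-h_{i_0}:j\in S\}$ with $Q+D\subset\P$. This already gives, unconditionally, the conclusion of the conjecture with $B$ of any prescribed finite cardinality and $C\subset\P-1$ infinite.

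The gap you identify is genuine and, as far as I can see, not closed by your sketch. Once $D_j=\{d_1,\dots,d_j\}$ has been fixed, producing the next $q_{j+1}$ requires $q_{j+1}+\{0,d_1,\dots,d_{j+1}\}\subset\P$, which is the realisation of a \emph{specific} admissible tuple in the primes --- exactly the content of \cref{conj_hardylittlewood} for that tuple. Maynard's sieve guarantees at least $m(k)$ primes among $k$ translates with no control over which indices are hit beyond pigeonholing over infinitely many $n$. In Tao--Ziegler's iteration this is harmless because the large set is drawn directly from the $n$'s and carries no primality constraint, so one may refine the family at will; your normalisation ties the large elements to a fixed index $i_0$ of the pattern, and nothing in the sieve forces $i_0$ (let alone all of $D_j$) to survive once the tuple is enlarged and the pattern re-pigeonholed. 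Since $m(k)/k\to 0$, there is no positive-proportion argument available either. Your phrase ``revisit the internals'' does not point to a mechanism for forcing a prescribed index into the pattern, and I do not see one. So what you have written is a reasonable research outline with the key difficulty correctly located, not a proof.

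Your conditional argument under \cref{conj_hardylittlewood} is fine and is indeed in the spirit of Appendix~\ref{appendixDHL}; it confirms plausibility but does not bear on the unconditional statement.
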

We remark that the analogous statement to \cref{conjecture_orderedprimesCBC} where $B$ is required to be contained in $\P-1$, instead of $C$, follows from the results in~\cite{TZ23} (together with the fact that there exists an infinite admissible set contained in $\P-1$).

Whenever $A\subset\N$ contains $B+C$ with $B,C \subset \N$ infinite, the set $A$ must have infinitely many pairs of terms at the same distance.
Just as in~\cite[Remark~1.4]{TZ23} this precludes a version of \cref{q:bcprimes} that applies to subsets of $\P$ with positive relative density: one can remove a subset of zero relative density from $\P$ such that the resulting set no longer has bounded gaps, and hence no infinite sumset.

We finish this section with a question in a different direction. 
By \cref{prop_B+Cindensity}, we know that not every set of positive density contains a sumset of the form $B\oplus B+t$ where $B$ is an infinite set of primes and $t\in\N$.
However, for certain sets of number theoretic origin, such as level sets of the classical Liouville function, this conclusion may hold. 
\begin{question}
Let $A\subset\N$ be the set of natural numbers that have an odd number of prime factors counting multiplicity.
Is there an infinite set $B\subset\P$ such that $B\oplus B\subset A$?
\end{question}
This is only the easiest case to state of numerous variants of this question.  
Instead we could consider integers whose number of prime factors are $a\bmod b$ for some $a\geq 0$ and $b\geq 1$, or count the prime factors in any of these classes without multiplicity, or more generally consider the level set of any aperiodic (meaning zero average in every infinite arithmetic progression) multiplicative function.

\subsection{Zero density sets}
\label{sec:general-zero-density}

Here we move away from concrete arithmetic sets like $\P$ and consider sets of zero density more generally. 
We begin by asking if infinite sumsets occur in sets of sublinear growth.

The analogous question in the case of finite configurations plays an important role in contemporary arithmetic combinatorics and the growth rate on the size of a set $A$ that guarantees the existence of various finite patterns has been extensively studied.
For example, for $3$-term progressions, there have been recent spectacular breakthroughs on the quantitative bound needed to guarantee the existence of a three term arithmetic progression~\cite{Bloom-Sisak, Kelley-Meka}.
For progressions of higher length, the best known bounds are due to Gowers~\cite{Gowers01}, but \Erdos{} conjectured that if the sum of the reciprocals of a set of integers diverges then the set contains arbitrarily long arithmetic progressions.  
This leads us to consider similar questions for infinite sumsets instead of finite progressions.

A naive attempt is to seek a sublinear growth rate that guarantees the existence of infinite sumsets. 
In other words, can one find a non-decreasing function $h\colon\N\to\N$ with $h(N)\to\infty$ as $N\to\infty$ such that any set $A\subset \N$ with
\begin{equation}
    \label{eq:growth-rate}
|A\cap \{1,\ldots,N\}|\geq \frac{N}{h(N)}
\end{equation}
contains $B+C$ for two infinite sets $B,C\subset\N$. 
However, the answer to this question is negative, because for any such function $h$ there exists a set $A\subset \N$ whose asymptotic growth rate is larger than $N/h(N)$ but the distance between consecutive elements in $A$ tends to infinity. 
Considering two distinct elements of $C$, it follows that the sumset $B+C$ of two infinite sets $B,C\subset\N$ contains infinitely many pairs of numbers at the same distance, so such a set $A$ provides a counterexample.  
However, we ask the following.

\begin{question}\label{q_sos_4}
    Is there a set $S\subset\N$ with upper Banach density $0$ such that for any subset $A\subset S$ with positive relative density, meaning that 
    \[
    \liminf_{N\to\infty}\frac{|A\cap\{1,\ldots,N\}|}{|S\cap\{1,\ldots,N\}|}>0, 
    \]
    there exist infinite sets $B,C\subset\N$ with $B+C\subset A$?
\end{question}

Given the behavior of specific examples of sets of zero Banach density, such as the set of primes, or the set of sums of two squares, it seems that \cref{q_sos_4} has a negative answer. 
In fact it is possible that the next question has a positive answer, which would rule out a positive answer to \cref{q_sos_4}.

\begin{question}\label{q_sos_3}
    Is it true that if $S\subset\N$ has zero  Banach density, there exists a subset $A\subset S$ with positive relative density such that for all but finitely many $t\in\N$,
    $A\setminus(A-t)$ has the same relative density (in $S$) as $A$.
\end{question}

To see why a positive answer to \cref{q_sos_3} implies a negative answer to \cref{q_sos_4}, suppose that $Y_t:=A\setminus(A-t)$ has the same relative density as $A$ for all but finitely many $t\in\N$. Choosing $N_t$ sufficiently large, the set 
\[
Y:=A\setminus\bigcup_t \{a\geq N_t: a\in A-t\}
\]
also has positive relative density but no gap $t$ ``appears'' in $Y$ infinitely many times, so $Y$ cannot contain an infinite sumset.

\subsection{Quantitative versions}
While most of our focus so far has been on qualitative results, we turn now to quantitative analogs.  
We have noted that there is no growth rate on the size of the set $A$ that guarantees the existence of infinite sumsets 
(see the discussion in \cref{sec:general-zero-density}). 
In~\cite{Host}, Host exhibits an example of a set $A\subset\N$ with $\ubdens(A)>0$ such that whenever $B+C\subset A$, both $\ubdens(B)=0$ and $\ubdens(C)=0$. 
However, this does not preclude there being constraints on the growth of the summands, and so we ask whether one can impose any growth rate on the sets $B$ and $C$. 
\begin{question}
\label{q_quant_growth_B+C_in_A}
Given $\delta>0$, is there a non-decreasing function $\mathcal{H}\colon\N\to\N$ with $\mathcal{H}(N)\to\infty$ as $N\to\infty$ satisfying the following: 
for any set $A\subset \N$ with $\ldens(A)\geq \delta$ there exist $B,C\subset\N$ with
\[
\min\bigl\{|B\cap \{1,\ldots,N\}|,\,|C\cap \{1,\ldots,N\}| \bigr\}\geq \mathcal{H}(N)
\]
for all sufficiently large $N$ and such that $B+C\subset A$.
\end{question}

We do not even know if one can impose such conditions in the coloring version of this question, where one seeks a monochromatic sumset $B+C$ for an arbitrary coloring of $\N$ using finitely many (or just two) colors.\footnote{We thank Thomas Bloom for discussions surrounding this question.}
In a similar direction, a result of \Erdos{} and Galvin~\cite{erdos_galvin}
states that for any function $\mathcal{H}\colon\N\to\N$ with $\mathcal{H}(N)\to\infty$ as $N\to\infty$ there is a 2-coloring of $\N$ such that whenever the IP-set $\FS(B)$ is monochromatic, one has $|B\cap\{1,\ldots,N\}|\leq\mathcal{H}(N)$.

There is a similar regime that is worth exploring: given $\delta>0$ and $N\in\N$, let $\phi(\delta,N)$ denote the largest integer such that any set $A\subset\{1,\ldots,N\}$ with $|A|\geq \delta N$ contains $B+C$, where $B,C\subset\{1,\ldots,N\}$ satisfy $\min\{|B|,|C|\}\geq \phi(\delta,N)$.
It is not difficult to show that $\phi(\delta,N)\to\infty$ as $N\to\infty$ for any fixed $\delta>0$. On the other hand, the exact behavior of this function is not clear and we make the following conjecture. 
\begin{conjecture}\label{conjecture_phiislog}
For every $\delta>0$,
\[
\limsup_{N\to\infty}\frac{\phi(\delta,N)}{\log N}<\infty\qquad\text{ and }\qquad\liminf_{N\to\infty}\frac{\phi(\delta,N)}{\log N}>0.
\]  
\end{conjecture}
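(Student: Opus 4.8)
We treat the two assertions in \cref{conjecture_phiislog} separately, and with very different expectations of difficulty.

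\emph{The lower bound $\liminf_{N\to\infty}\phi(\delta,N)/\log N>0$.} The plan is to recast the problem as a Zarankiewicz-type question and invoke the K\H{o}v\'{a}ri--S\'{o}s--Tur\'{a}n theorem. Given $A\subset\{1,\dots,N\}$ with $|A|\geq\delta N$, form the bipartite graph $G_A$ on two copies of $\{1,\dots,N\}$ with an edge between $x$ and $y$ precisely when $x+y\in A$. A direct count gives $|E(G_A)|=\sum_{s\in A}(s-1)$, a quantity minimized when $A$ consists of the smallest $\lceil\delta N\rceil$ integers; hence $|E(G_A)|\geq(\delta^2/3)N^2$ once $N$ is large, so $G_A$ has edge density bounded below by a constant depending only on $\delta$. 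By K\H{o}v\'{a}ri--S\'{o}s--Tur\'{a}n, $G_A$ then contains a complete bipartite subgraph $K_{k,k}$ with $k=\Omega_\delta(\log N)$, and its two colour classes are sets $B,C\subset\{1,\dots,N\}$ with $B+C\subset A$ and $|B|=|C|=k$. Thus $\phi(\delta,N)\geq k=\Omega_\delta(\log N)$, which is the first inequality. (This is presumably also what lies behind the remark that $\phi(\delta,N)\to\infty$; the point is that K\H{o}v\'{a}ri--S\'{o}s--Tur\'{a}n already produces the logarithmic rate, not merely divergence.)

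\emph{The upper bound $\limsup_{N\to\infty}\phi(\delta,N)/\log N<\infty$.} Here the plan is to exhibit, for each large $N$, a set $A\subset\{1,\dots,N\}$ with $|A|\geq\delta N$ that contains no $B+C$ with $\min\{|B|,|C|\}\geq K_\delta\log N$ for a suitable constant $K_\delta$. The natural candidate is a random set: include each element of $\{1,\dots,N\}$ independently with probability $2\delta$ and then thin down to density $\delta$. Heuristically this should work: for $|B|=|C|=k$ one has $\Pr[B+C\subset A]=\delta^{|B+C|}$ with $|B+C|\geq 2k-1$, and for ``most'' pairs $(B,C)$ the sumset $B+C$ has size comparable to $k^2$, so the expected number of such configurations is at most $\binom{N}{k}^2\delta^{\epsilon_\delta k^2}$, which tends to $0$ as soon as $k\geq K_\delta\log N$.

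\emph{The main obstacle} is that this first-moment bound is not uniform over $(B,C)$: when $B$ and $C$ are additively structured (say both arithmetic progressions with a common difference) one has $|B+C|=2k-1$, and $\binom{N}{k}^2\delta^{2k-1}$ is not summable. One must therefore control the number of pairs of $k$-sets grouped by the size of their sumset. For $|B+C|$ essentially $2k-1$ this is easy, since such pairs are governed by two arithmetic progressions, of which there are only $N^{O(1)}$, and $N^{O(1)}\delta^{2k}\to 0$ once $k\gg_\delta\log N$; and for $|B+C|$ a fixed fraction of $k^2$ the trivial bound $\binom{N}{k}^2$ already closes the estimate. The delicate regime is the intermediate one, where $|B+C|$ is superlinear but subquadratic in $k$. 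Here one wants a Freiman-type dichotomy — either $B\cup C$ is trapped in a generalized arithmetic progression of bounded rank and size $O(k)$ (which a random $A$ avoids with high probability, by a separate first-moment calculation over all such progressions, using polynomial Freiman--Ruzsa to keep ranks and Freiman constants under control), or $|B+C|$ is genuinely quadratic in $k$. Making the total contribution $o(1)$ in this middle range is the crux, and it may well require the hypergraph container method or Janson-type correlation inequalities rather than a bare union bound; alternatively a semi-random construction — a random density-$\delta$ set with long arithmetic and generalized progressions explicitly deleted — may be easier to analyse. I expect this intermediate-doubling estimate to be the genuine difficulty, and it is the reason the upper bound in \cref{conjecture_phiislog} remains a conjecture while the lower bound is within reach.
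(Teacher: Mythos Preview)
The statement is a \emph{conjecture} in the paper; the paper does not prove it and only supplies a brief heuristic for each direction. So there is no paper proof to compare against, only the paper's heuristic reasoning.

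On the lower bound you actually go further than the paper. The paper's heuristic is a first-moment argument: choose $B\subset[N]$ at random of size $\varepsilon\log N$ and observe that the expected size of $C=\bigcap_{b\in B}(A-b)$ is about $\delta^{\varepsilon\log N}N=N^{1+\varepsilon\log\delta}\gg\log N$. Your approach via the K\H{o}v\'ari--S\'os--Tur\'an theorem is a rigorous version of essentially the same double-counting: encoding $A$ as a bipartite graph with edge density $\gtrsim\delta^2$, the standard KST count $n\binom{e/n}{t}\le(t-1)\binom{n}{t}$ forces a $K_{t,t}$ once $t\le c_\delta\log N$, and the two colour classes give $B,C$ with $B+C\subset A$. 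This genuinely proves $\liminf\phi(\delta,N)/\log N>0$, whereas the paper leaves it at the heuristic level. The two arguments are cousins (KST \emph{is} the convexity/double-counting formalisation of the random-$B$ idea), but yours is the cleaner and complete one.

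On the upper bound you and the paper agree that a random $A$ should witness $\phi(\delta,N)=O_\delta(\log N)$. The paper stops at the one-line heuristic that for random $A$ and any $B$ of size $\theta\log N$ one expects $|\bigcap_{b\in B}(A-b)|\approx\delta^{\theta\log N}N\ll\log N$. You go further, correctly identifying the genuine obstruction to turning this into a proof: the union bound over pairs $(B,C)$ fails precisely in the small-doubling regime, and controlling that regime seems to require Freiman-type structure plus containers or a tailored semi-random construction. This analysis is sound and is indeed why the upper bound remains open.

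In short: your proposal is correct as far as it goes, proves strictly more than the paper does (the lower bound), and gives a more informative discussion of why the upper bound resists the obvious attack.
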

We give a heuristic explanation for why this is the correct approximation for $\phi(\delta,N)$.
Given $\delta>0$, $N\in\N$ and $A\subset[N]$ with $|A|\geq\delta N$, pick $B\subset[N]$ uniformly at random with $|B|=\varepsilon\log N$, for some small positive $\varepsilon$.
    Then the expected value of the cardinality of $C:=\bigcap_{b\in B}(A-b)$ is $|C|\approx\delta^{\varepsilon\log N}N=N^{\varepsilon\log\delta+1}\gg\log N$.
    This shows that $\phi(\delta,N)\gg\log N$.
For the converse estimate, one can take $A$ itself to be a random set, and then for any $B$ with size $|B|=\theta\log N$, for some parameter $\theta>0$, the expected value of the cardinality of $C:=\bigcap_{b\in B}(A-b)$ should be $|C|\approx\delta^{\theta\log N}N\ll\log N$.

\begin{question}
Does the limit
\[
\lim_{N\to\infty}\frac{\phi(\delta,N)}{\log N}
\]
exist, and if so, what is it?
\end{question}

We can also ask about the behavior of $\phi$ as $\delta\to0$ when  $N\to\infty$.
\begin{question}
    Is $\displaystyle\liminf_{N\to\infty}\frac{\phi\big(\delta_N,N\big)}{\log N}>0$ for some sequence $\delta_N\to0$ as $N\to\infty$?
    Can one take $\delta_N=1/\log N$?
\end{question}

\subsection{Sumsets in random sets}

Let $(p_n)_{n\in\N}$ be a sequence taking values in $[0,1]$ and consider the random set $A\subset\N$ defined such that the event $n\in A$ has probability $p_n$ and these events are independent.
A general question is for which sequences $(p_n)_{n\in\N}$ does the set $A$ contain almost surely sumsets.

If the probabilities $p_n$ are not sufficiently small, then $A$ almost surely contains arbitrarily long intervals and hence also infinite sumsets.
More precisely, if $(p_n)_{n\in\N}$ is a decreasing sequence and for every $\varepsilon>0$ we have 
\[
\lim_{n\to\infty}p_n\cdot n^{\varepsilon}\to\infty
\]
then for every $k,n\in\N$, the probability that the interval $[n+1,n+k]$ is contained in $A$ is $\prod_{i=1}^kp_{n+i}$, which is approximately equal to $p_n^k$. Since for disjoint intervals these events are independent and eventually have probability larger than $1/n$, by the second Borel-Cantelli lemma it follows that $A$ contains infinitely many intervals of length $k$ almost surely.
Since this holds for every $k$, the set $A$ is thick (see \cref{def:thick_synd_ps}) almost surely.

On the other hand, suppose that
\[
\lim_{n\to\infty}p_n\cdot n^\varepsilon =  0
\]
for some $\varepsilon>0$.
Then for some $k\in\N$, we have that  $p_n\ll n^{-1/k}$.
Then for any set $F\subset\N$ with $|F|=k$, the probability that $A$ contains infinitely many shifts of $F$ is $0$ and hence almost surely $A$ does not contain the configuration $B+F$ for any infinite set $B\subset \N$.

While random sets are often sampled from a distribution where the events $n\in A$ and $m\in A$ are independent for $n\neq m$, this is of course not a requirement. 
Random sets drawn using different distributions may still lead to interesting and non-trivial infinite sumsets. 
As an example, we ask the following question.
\begin{question}
    Fix $\gamma>0$ and, for each $x\in\R$, let $A_x:=\{n\in\N:\{n^\gamma x\}\leq1/\log n\}$.
    For which values of $\gamma$ is it true that for almost every $x\in\R$, the set $A_x$ contains a sumset $B+C$ for infinite sets $B, C\subset\N$?
\end{question}
We note that for $\gamma<1$, the set $A_x$ is thick for almost every $x$ and hence contains a sumset. 
On the other hand, when $\gamma$ is an integer one can show that for every irrational $x$ the set $A_x$ cannot contain an infinite sumset. 
The deterministic positive density analog of such sets is discussed in \cref{example_nilbohrset}.

\section{Analogs of sumsets beyond the additive integers}
\label{sec:semigroups}

\subsection{Product sets}

A sequence $(\Phi_N)_{N \in \N}$ of finite subsets of $\N$ is a \define{multiplicative \Folner{} sequence} if 
\[
\lim_{N \to \infty} \dfrac{|\Phi_N \cap t\Phi_N|}{|\Phi_N|} = 1
\]
for all $t \in \N$ where $t \Phi_N  = \{ tn  : n \in \Phi_N \}$.
It is an immediate consequence of~\cite[Theorem~1.3]{MRR} that if $A \subset \N$ has positive upper density with respect to a \Folner{} sequence on $\N$ for multiplication, then $A$ contains a product set 
\[
BC = \{ bc : b \in B, c \in C \}
\]
for some  infinite sets $B,C \subset \N$.
On the other hand, the following multiplicative analog of \cref{thm_BBt} is open.

\begin{conjecture}
If $A \subset \N$ has positive density with respect to a \Folner{} sequence for multiplication, then it contains $\{b_1b_2t:b_1\neq b_2\}$ for some infinite set $B \subset \N$ and $t\in\N$.
\end{conjecture}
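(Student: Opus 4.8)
The plan is to transport the problem to the additive setting via unique factorization and then adapt the ergodic-theoretic proof of \cref{thm_BBt}. Sending each $n\in\N$ to its vector of prime exponents identifies the multiplicative semigroup $(\N,\times)$ with the free commutative monoid $M=\bigoplus_{p\text{ prime}}\N_0$ under addition, and under this identification a multiplicative \Folner{} sequence on $\N$ becomes an (additive) \Folner{} sequence in $M$, while the configuration $\{b_1b_2t:b_1\neq b_2\}$ becomes the translated restricted sumset $\tau+\{\beta_1+\beta_2:\beta_1,\beta_2\in B,\ \beta_1\neq\beta_2\}$ with $\tau\in M$ and $B\subset M$ infinite. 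Thus the conjecture is equivalent to the assertion that every $A\subset M$ of positive density along some \Folner{} sequence contains such a configuration, i.e. to the analog of \cref{thm_BBt} for $M$ (equivalently, for its group of fractions $G=\bigoplus_p\Z$). A genuine translate $\tau$ is unavoidable, exactly as the shift in \cref{thm_BBt} is: the set of $n$ with an odd number of prime factors counted with multiplicity has positive multiplicative density but contains no product $b_1b_2$ of two of its own elements, while one recovers the configuration after multiplying by a single prime.

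To prove the additive statement for $M$, I would run the argument of \cite{KMRR2} with $\Z$ replaced by $G=\bigoplus_p\Z$. First, a Furstenberg-type correspondence principle for the countable amenable group $G$ produces a measure-preserving $G$-system $(X,\mathcal{B},\mu,(T_g)_{g\in G})$ and a set $E\in\mathcal{B}$ with $\mu(E)$ equal to the multiplicative density of $A$ and with the density of $(A-g_1)\cap\cdots\cap(A-g_j)$ at least $\mu(T_{g_1}^{-1}E\cap\cdots\cap T_{g_j}^{-1}E)$ for every finite tuple $g_1,\dots,g_j$. One then analyzes the relevant ergodic averages to split off a compact (Kronecker-type) component, which dictates the choice of $\tau$, from a complementary component along which the pertinent correlations factor; with this in hand one builds $B=\{\beta_n\}$ recursively, at each stage choosing $\beta_{n+1}$ from a set of positive density so that all of the new pairwise sums $\tau+\beta_i+\beta_{n+1}$ land in $A$. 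The coarser $B+C$ analog --- the multiplicative version of \cref{thm:MRR} and \cref{thm_BCD} --- is already available at this level of generality, being an immediate consequence of the amenable-group form of the results of \cite{MRR} as noted above, so what is genuinely new is transferring the finer argument of \cite{KMRR2}, which yields a single infinite set and controls the shift.

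The main obstacle is that the structural inputs of \cite{KMRR2} must be established for actions of the non-finitely-generated abelian group $\bigoplus_p\Z$: existence and good behavior of the Kronecker and Host--Kra factors, mean convergence of the relevant ergodic averages along the transported \Folner{} sequence, and the dichotomy isolating the compact part (responsible for $\tau$) from a part with negligible higher-order correlations. Much of the abelian amenable-group ergodic theory is in the literature, but the precise structural statements used in \cite{KMRR2} would need to be re-derived in this generality, and one must check that no step secretly relies on special features of $\Z$ such as being singly generated or linearly ordered. A secondary, more technical issue is passing between the monoid $M$ and its group of fractions $G$: the recursive construction and the correspondence principle should be arranged so that all group elements produced stay inside $M$ (equivalently, so that every $b_1b_2t$ is an honest natural number), which should be routine by restricting to large coordinate boxes but requires care. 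Granting that the $\Z$-case techniques transfer, the remainder is bookkeeping, and I would expect essentially all of the difficulty to reside in establishing the requisite ergodic structure theory for $\bigoplus_p\Z$-actions; see also \cref{sec:amenable} for the general amenable framework in which such a statement would naturally be phrased.
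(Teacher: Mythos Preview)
This statement is a \emph{conjecture} in the paper, explicitly described there as open; there is no proof in the paper to compare your proposal against. Your reduction via unique factorization to the additive problem in $M=\bigoplus_p\N_0$ (or its group of fractions $G=\bigoplus_p\Z$) is correct and is precisely the point of view the authors adopt: the multiplicative conjecture is the special case $G=\bigoplus_p\Z$ of their \cref{q:bbt_abelian}, which they also leave open.

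The gap you yourself flag is genuine and is exactly why the conjecture remains open. The paper states this explicitly in the discussion preceding \cref{q:bbt_abelian}: the techniques of \cite{KMRR} together with Griesmer's work may handle $\Z^d$, but ``in other settings one cannot proceed in the same way, because the analogous ergodic structure theory is not currently available.'' The group $\bigoplus_p\Z$ is not finitely generated, so it falls outside the $\Z^d$ regime, and the Host--Kra structure theory and the specific dynamical inputs used in \cite{KMRR2} are not presently known at that level of generality. Your description of this as the ``main obstacle'' is accurate, but your concluding assessment that ``the remainder is bookkeeping'' understates the situation: establishing the requisite structure theory for $\bigoplus_p\Z$-actions is not a routine extension but the entire content of the open problem. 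In short, your proposal is a correct and well-motivated outline of what a proof would have to do, and it agrees with the authors' framing, but it is a research program rather than a proof.
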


The recent work of Acklesberg~\cite[Theorem 1.4]{ackelsberg} shows that this is false. He constructs an explicit example with local obstructions that is an adaptation  of the Straus example (\cref{example_straus}) to the multiplicative setting.

Next, we ask if sets with positive density but without local obstructions contain both sums and products of infinite sets. 
Say that $A\subset\N$ is \define{aperiodic} if 
\begin{equation}
\label{eqn_aperiodic}
\lim_{N\to\infty} \frac{1}{N}\sum_{n=1}^N \one_A(n)e(\alpha n) =0 \quad\textup{ for all } \alpha\in\Q\setminus\Z
\end{equation}
where $e(\alpha n) = e^{2 \pi i \alpha n}$.
If the density of $A$ exists then~\eqref{eqn_aperiodic} is equivalent to 
\[
\dens(A\cap (a\Z+b))=\frac{1}{a}\dens(A) \quad\textup{ for all } a\in\N \textup{ and all } b\in\Z,
\]
which we can think of as saying $A$ has no bias towards any infinite arithmetic progression. 
Many sets of integers with number-theoretic or combinatorial origins, such as the set of all numbers with an even number of prime factors or Beatty sequences $\{\lfloor n\alpha+\gamma\rfloor:n\in\N\}$ for some irrational $\alpha > 0$ and $\gamma \in\R$, have this property, warranting our interest in this class of sets.

\begin{question}\label{question_sumsandproducts}
Fix $A \subset \N$ that has positive density and is aperiodic. Are there infinite sets $B,C \subset \N$ with $B+C \subset A$ and $BC \subset A$?
\end{question}

Even for the explicit examples we have mentioned, we consider the answer to be interesting.
\cref{question_sumsandproducts} is related to~\cite[Question 8.4]{KMRR}, which we repeat here.

\begin{question}\label{question_sumsandproductscoloring}
Is it true that for every finite partition of $\N$, one of the sets in the partition contains $B+C$ and $BC$ for some infinite sets $B,C\subset\N$?
\end{question}

If the assumption of aperiodicity in \cref{question_sumsandproducts} is dropped, the answer is negative due to local obstructions. 
For example, the modified question is false when $A$ is the set of odd numbers.
The following variant of \cref{question_sumsandproducts} circumvents this issue by introducing a shift. 
It makes use of logarithmic density (see \cref{def_log_density}).

\begin{question}\label{question_sumsandproducts_shift}
Suppose $A\subset\N$ has positive upper logarithmic density. Are there infinite sets $B,C \subset \N$ with $B+C \subset A$ and $BC+1 \subset A$?
\end{question}
This question is a generalization of one asked previously by the second author, seeking the result when the sets $B$ and $C$ are singletons.
In \cref{question_sumsandproducts_shift} one might replace the assumption of positive upper logarithmic density with the stronger assumption of syndeticity as defined in \cref{def:thick_synd_ps}. The question is open even under that assumption, and in fact we do not know the answer to the following special case: 

\begin{question}\label{question_syndeticproducts}
    Does every syndetic set $S\subset\N$ contain a product set $BC$ for infinite sets $B,C\subset\N$?
\end{question}

When dealing with Ramsey theoretic questions involving both sums and products, it is often easier to consider the analogous questions over $\Q$ or, more generally, countably infinite fields.
To measure density in $\Q$ we make use of a \emph{double \Folner{} sequence} $\Phi=(\Phi_N)_{N\in\N}$, as introduced in~\cite[Definition 1.3]{Bergelson_Moreira17}; this means that $\Phi$ is simultaneously a \Folner{} sequence in group $(\Q,+)$ and in the group $(\Q^*,\times)$.
Such sequences lead to a notion of density, $\dens_{\Phi}$, in $\Q$ that is invariant under both addition and multiplication.

\begin{question}
Let $A\subset\Q$ have $\dens_\Phi(A)>0$ for some double \Folner{} sequence $\Phi$. 
Are there infinite sets $B,C\subset\Q$ such that $B+C \subset A$ and $BC \subset A$?
\end{question}

We end this section with the following related question of Hindman regarding partitions of $\N$.

\begin{question}[{\cite[3.1 Question~(a)]{Hindman80}}]
\label{q_hindman80_31_Qa}
Does there exist a partition of $\N$ into two sets such that neither of them contains an infinite set $B\subset\N$ together with 
\[
\{b_1+b_2: b_1,b_2\in B,~b_1\neq b_2\}\cup\{b_1b_2: b_1,b_2\in B,~b_1\neq b_2\}?
\]
\end{question}

Hindman~\cite{Hindman80} showed this question has a positive answer if one instead considers partitions of $\N$ into seven sets.

\subsection{Cartesian product sets}
We turn to Cartesian product configurations that can be found in subsets of $\N^2$ with positive density.
The starting point of our inquiry is the infinite Ramsey theorem for $2$-sets, which we recall.

\begin{theorem}[Ramsey's theorem for $2$-sets, \cite{ramsey}]
\label{thm_ramsey_2_sets}
For any finite coloring of $\N^2$ there exists an infinite set $B\subset\N$ such that
\begin{equation}
\label{eqn:bbtriangle}
\{(b_1,b_2): b_1,b_2\in B,~b_1<b_2\}
\end{equation}is monochromatic.
\end{theorem}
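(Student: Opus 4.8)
The plan is to run the classical iterated pigeonhole argument, building a nested sequence of infinite ``reservoir'' sets together with a sequence of pivot elements whose color, relative to everything lying to their right, is pinned down in advance.

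First I would fix a coloring $c$ of the pairs $\{(m,n)\in\N^2:m<n\}$ using finitely many colors $\{1,\dots,r\}$, and construct by recursion a decreasing sequence of infinite sets $S_0\supseteq S_1\supseteq\cdots$, elements $a_1<a_2<\cdots$, and colors $\chi_1,\chi_2,\dots\in\{1,\dots,r\}$ as follows. Put $S_0=\N$. Given $S_{k-1}$ infinite, let $a_k=\min S_{k-1}$. The sets $\{n\in S_{k-1}:n>a_k,\ c(a_k,n)=j\}$ for $j=1,\dots,r$ partition the infinite set $S_{k-1}\setminus\{a_k\}$, so at least one of them is infinite; call its color $\chi_k$ and let $S_k$ be that piece. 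Since the $S_k$ are nested and $a_\ell$ is drawn from $S_{\ell-1}\subseteq S_k$ for every $\ell>k$, and since $S_k\subseteq\{n:c(a_k,n)=\chi_k\}$, we obtain $c(a_k,a_\ell)=\chi_k$ whenever $\ell>k$. Note also that $a_{k+1}=\min S_k>a_k$, so the sequence $(a_k)$ is strictly increasing.

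Next I would extract the monochromatic set. As there are only $r$ colors, some single color $\chi$ satisfies $\chi_k=\chi$ for all $k$ in an infinite index set $I$. Set $B=\{a_k:k\in I\}$, an infinite subset of $\N$. For $k,\ell\in I$ with $k<\ell$ we have $c(a_k,a_\ell)=\chi_k=\chi$ by the previous paragraph, so $\{(b_1,b_2):b_1,b_2\in B,\ b_1<b_2\}$ is monochromatic, which is exactly~\eqref{eqn:bbtriangle}.

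I would also note the alternative ultrafilter proof: fix a nonprincipal ultrafilter $p$ on $\N$; for each $m$ there is a unique color $\tilde c(m)$ with $\{n:c(m,n)=\tilde c(m)\}\in p$; choose a set in $p$ on which $\tilde c$ is constant and then recursively pick the $b_i$ inside successively smaller sets of $p$ — this yields the same conclusion while outsourcing the ``infinitely many'' bookkeeping to $p$. I do not expect a genuine obstacle in either approach; the only point needing care is the double induction — verifying that each reservoir $S_k$ stays infinite and that every pivot $a_\ell$ really does lie in all earlier reservoirs $S_k$ with $k<\ell$, which is what makes the color $c(a_k,a_\ell)$ depend only on $k$.
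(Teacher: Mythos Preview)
Your argument is correct and is the standard iterated-pigeonhole proof of the infinite Ramsey theorem for pairs; the ultrafilter alternative you sketch is also valid. The paper does not supply its own proof of this statement---it is quoted as a classical result with a citation to Ramsey---so there is nothing to compare against beyond noting that your proof is the conventional one.
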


Motivated by Theorems \ref{thm_ramsey_2_sets} and \ref{thm_BBt}, we discuss the weakest hypothesis on a set $A \subset \N^2$ that guarantees the existence of an infinite set $B \subset \N$ and a shift $t \in \N^2$ such that 
\begin{equation}
\label{eqn:BBcart}
\big\{(b_1,b_2): b_1,b_2\in B,~b_1<b_2\big\}\subset A-t.
\end{equation}
Given $A\subset\N^2$, we write $A_n=\{m\in\N: (n,m)\in A\}$ for the fiber of $A$ with first coordinate $n$. 
A necessary condition for $A$ to satisfy~\eqref{eqn:BBcart} is that there are  infinitely many vertical fibers $A_n$ such that any finite subcollection of them has infinite intersection. 
Using this condition, the following example shows that the naive density version of \cref{thm_ramsey_2_sets} -- that a set $A \subset \N^2$ having positive density must contain a translate of~\eqref{eqn:bbtriangle} for some infinite set $B \subset \N$ -- is false.

\begin{example}
Let $\Phi=(\Phi_N)_{N\in\N}$ be a \Folner{} sequence  in $\N^2$ (see \cref{sec:amenable} for the definition).
Then there exists a set $A\subset\N^2$ with $\dens_\Phi(A)=1$ and such that each vertical fiber $A_n$ is finite.
In particular, any such $A$ cannot satisfy~\eqref{eqn:BBcart}.

For each $j\in\N$, using the F\o lner property, we choose the smallest $N_j$ such that for any $N>N_j$, the first $j$ columns of $\Phi_N$ account for less than $1/j$ of the total cardinality of $\Phi_N$. More precisely, if $N> N_j$, then 
\begin{equation*}\label{eq_folnersequencesestimate}
    \sum_{n>j}\big|(\Phi_N)_n\big|\geq|\Phi_N|(1-1/j).
\end{equation*}
Let $M_j=\max\{m:(j,m)\in\Phi_N,N\leq N_j\}=\max\bigcup_{N\leq N_j}(\Phi_N)_j$ and take $A$ to be the set described  in terms of its columns by $A_j=\{1,\dots,M_j\}$.
Note that if $n\geq j$ and $N\leq N_j$, then $A_j\supset(\Phi_N)_j$.
It follows that for $N\in[N_j,N_{j+1}]$,
$$|A\cap\Phi_N|=\sum_n|A_n\cap(\Phi_N)_n|\geq\sum_{n>j}|A_n\cap(\Phi_N)_n|=\sum_{n>j}|(\Phi_N)_n|\geq|\Phi_N|(1-1/j),$$
and hence $\dens_\Phi(A)=1$ as claimed.
\end{example}

The following question can be viewed as a density variant of \cref{thm_ramsey_2_sets}.

\begin{question}
\label{q_bxbxt2}
Suppose $A\subset\N^2$ satisfies
\begin{equation}
    \label{eq_liminfandnotsup}
\liminf_{n\in\N} \dens_\Phi(A_n) >0
\end{equation}
for some \Folner{} sequence $\Phi$ on $\N$. 
Does there exist an infinite set $B\subset\N$ and an element $t\in\N^2$ satisfying~\eqref{eqn:BBcart}?
\end{question}

Using the map $(x,y) \mapsto x+y$, one can show that a positive answer to \cref{q_bxbxt2} implies \cref{thm_BBt}.
However, a positive answer to \cref{q_bxbxt2} is significantly more powerful:  for instance, via the map $(x,y) \mapsto \ell x + my$, it  implies \cref{conj:k_box_j}.

The following example, constructed by Felipe Hern\'andez Castro,
shows the $\liminf$ in~\eqref{eq_liminfandnotsup} can not be replaced with $\limsup$.

\begin{example}
\label{ex_felipe}
Take $1<c<c'<c''<2$ and define
\[
X=\N \cap \bigcup_{n\in\N}\left[4^{n},c\cdot4^{n}\right)
\qquad\text{and}\qquad
Y=\N \cap \bigcup_{n\in\N}\left[c'4^{n},c''\cdot4^{n}\right),
\]
and take $A=X\times Y$. Note that for every integer $t\geq 0$ the intersection $X\cap (Y-t)$ is finite, implying that no shift of $A$ can contain a set of the form $\{(b_1,b_2): b_1,b_2\in B,~b_1<b_2\}$ for some infinite $B \subset \N$. Yet, there is $\delta>0$ such that $\ldens(A_n)>\delta$ holds for a set of $n\in\N$ that has positive lower density. 
\end{example}

Our next theorem provides some evidence for a positive answer to \cref{q_bxbxt2}, by showing that a weaker condition on the set $A$ implies that there exist infinite sets $B,C \subset \N$ with
\[
\{(b,c): b\in B,~c\in C,~b<c\} \subset A.
\]

\begin{theorem}
\label{q_bxc_restircted}
If $A\subset\N^2$ satisfies $\limsup_{n\in\N} \dens_\Phi(A_n) >0$ for some \Folner{} sequence $\Phi$, 
then there exist infinite sets $B,C\subset\N$ such that $\{(b,c): b\in B,~c\in C,~b<c\}\subset A$.
\end{theorem}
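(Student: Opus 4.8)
The plan is to extract a single "good" column along which we have positive density, and then run the one-dimensional sumset theorem (\cref{thm:MRR}) fiberwise together with a diagonalization over many good columns. Concretely, since $\limsup_{n}\dens_\Phi(A_n)>0$, fix $\delta>0$ and an infinite set $N_1<N_2<\cdots$ of columns with $\dens_\Phi(A_{N_j})\ge\delta$ for all $j$. The key point is that we do not need all the fibers to be simultaneously large; we only need to choose the elements of $C$ one at a time from a single well-chosen fiber and the elements of $B$ from intersections of finitely many fibers, with a growth constraint ensuring $b<c$.

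First I would set up the following inductive construction. Enumerate the good columns $n_1<n_2<\cdots$ (a subset of the $N_j$ above). At stage $k$ we will have chosen $b_1<\cdots<b_{k-1}$ and $c_1<\cdots<c_{k-1}$ with $c_i$ lying in the fiber $A_{n_{\sigma(i)}}$ for an increasing function $\sigma$, and such that $(b_i,c_j)\in A$ whenever $i<j$. The engine driving the construction is \cref{thm:MRR} applied not to a single set but to the finite intersection trick used in its proof: as in \cref{prop_B+Cindensity} and the proof of \cref{thm_sumset_szemeredi}, the proof of \cref{thm:MRR} actually produces, from a set of positive $\Phi$-density, an auxiliary set $L$ such that every finite subset of $L$ has many common "return times" of positive density. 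Applying this to $A_{n_1}$ gives such an $L_1\subset\N$; pick $b_1\in L_1$, then the set of $c$ with $c\in A_{n_1}$ and $c\in(A_{n_1}-b_1)$-type return has positive density, so we may pick $c_1$ in a later good fiber that also respects $c_1>b_1$. The real subtlety is that $b_{k}$ must work for \emph{all} future $c_j$, not just $c_k$; this is handled exactly as in the proof of \cref{thm_sumset_recurrence}: choose $b_k$ from the common refinement of the return-time sets associated to the columns already committed to, and then choose $c_k$ from a new good column large enough that $c_k>b_k$ and $c_k$ lies in $\bigcap_{i\le k}(A_{n_{\sigma(i)}}-b_i)$ along the $\Phi$-density; positivity of density at each finite stage keeps the construction alive.

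The step I expect to be the main obstacle is reconciling \emph{two different} asymptotic regimes: the density in the $C$-direction lives along the F\o lner sequence $\Phi$ on $\N$ (inside individual columns), while the ordering constraint $b<c$ and the passage to "later" columns is a genuinely two-dimensional bookkeeping issue. In particular one must ensure that the positive-density sets of candidate $c$'s inside a fixed column actually contain arbitrarily large elements — this is automatic because a set of positive $\dens_\Phi$ is infinite — and that the finite-intersection property used to pick $b_k$ survives infinitely often; here one invokes Bergelson's intersectivity lemma (\cite[Theorem~2.1]{bergelson}), exactly as in the proofs of \cref{thm_sumset_szemeredi} and \cref{thm_sumset_recurrence}, applied to the family of return-time sets coming from a single fixed good column, to produce an infinite index set along which all finite intersections have positive density. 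Once that lemma is in place, the diagonalization is routine: alternately grow $B$ (using the intersectivity-lemma index set to guarantee the current $b$ is compatible with all previously chosen $c$'s) and grow $C$ (choosing a fresh large element of a good column compatible with all previously chosen $b$'s), yielding infinite $B,C\subset\N$ with $\{(b,c):b\in B,\ c\in C,\ b<c\}\subset A$.
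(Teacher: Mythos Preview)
Your proposal over-complicates the argument and misidentifies which objects to intersect. There is no additive structure in the statement: the condition $(b,c)\in A$ simply reads $c\in A_b$, so what one needs are intersections of the \emph{fibers} $A_b$ themselves, not of shifted copies $A_n-b$ or of ``return-time sets'' drawn from the internals of the proof of \cref{thm:MRR}. In particular, your requirement that $c_k\in\bigcap_{i\le k}(A_{n_{\sigma(i)}}-b_i)$ would yield $(n_{\sigma(i)},\,c_k+b_i)\in A$, not the desired conclusion $(b_i,c_k)\in A$. Likewise, picking $b_1$ from an auxiliary set $L_1$ produced by \cref{thm:MRR} applied to the single fiber $A_{n_1}$ tells you nothing about the fiber $A_{b_1}$, which is the one that matters. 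The \cref{thm:MRR} machinery plays no role here.

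The paper's proof is short and direct. Fix $a>0$ and columns $n(1)<n(2)<\cdots$ with $\dens_\Phi(A_{n(i)})\ge a$. Apply Bergelson's intersectivity lemma \cite[Theorem~2.1]{bergelson} directly to the family $\{A_{n(i)}\}_{i\in\N}$: after passing to a sub-\Folner{} sequence one obtains a subsequence $r(1)<r(2)<\cdots$ of the $n(i)$ with $\dens_\Phi\bigl(A_{r(1)}\cap\cdots\cap A_{r(i)}\bigr)>0$ for every $i$. Now take $B$ to consist of these column indices and $C$ to consist of elements of the nested intersections, interleaved so that $b(1)<c(1)<b(2)<c(2)<\cdots$ with $c(i)\in A_{b(1)}\cap\cdots\cap A_{b(i)}$; this is possible because each such intersection, having positive density, is infinite. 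Whenever $b(i)<c(j)$ the interleaving forces $i\le j$, hence $c(j)\in A_{b(i)}$, i.e.\ $(b(i),c(j))\in A$. You had the right lemma; it just needs to be applied to the fibers rather than to return-time sets.
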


\begin{proof}
By assumption on the set $A$,  there are $a > 0$ and $n(1)<n(2)<\ldots\in\N$ such that $\dens_\Phi(A_{n(i)}) \ge a$ for all $i \in \N$.
It follows from~\cite[Theorem~2.1]{bergelson} that, after possibly passing to a sub-\Folner{} sequence, there is a subsequence $r(1), r(2), \ldots$ of the sequence $n(1),n(2),\ldots$ with
\[
\dens_\Phi(A_{r(1)} \cap A_{r(2)} \cap \cdots \cap A_{r(i)}) > 0
\]
for all $i \in \N$.
Inductively choose $b(i)$ and $c(i)$ with $c(i) \in A_{b(1)} \cap \cdots \cap A_{b(i)}$ and $b(i+1) > c(i) > b(i)$.
Taking $B = \{b(i) : i \in \N\}$ and $C = \{ c(i) : i \in \N \}$, the proof is complete.
\end{proof}

A tantalizing possibility is to upgrade the conclusion of \cref{q_bxc_restircted} to $A\supset B\times C$, but a counterexample is  the set $A=\{(n,m):n<m\}$ which can not contain such a Cartesian product with both $B$ and $C$  infinite.
An obvious way to rule out this example is to impose that the set $A$ must be symmetric with respect to the diagonal (meaning that $(a_1,a_2)\in A$ if and only if $(a_2,a_1)\in A$).  
However, the next example shows that even this is not sufficient.

\begin{example}\label{example_evenoddproduct}
Let $A$ be the set
$
\big\{(r,s)\in\N^2: \max\{r,s\}~\text{is even},~\min\{r,s\}~\text{is odd}\big\},
$
which is a symmetric subset of $\N^2$ whose density equals $1/4$ with respect to any \Folner{} sequence in $\N^2$. 
Moreover, $d_\Phi(A_n)=1/2$ for every odd $n$ and every F\o lner sequence $\Phi$ in $\N$.
However, there are no infinite sets $B,C \subset \N$ such that $B \times C\subset A$.
\end{example}

\subsection{Sumsets in general abelian groups}

Many questions on sums of infinite sets make sense in all countable abelian groups, where one can define a notion of upper Banach density.
With care one can even work with general abelian semigroups. Although we have done so in specific examples such as $(\N,\times)$, here we restrict our attention to countable abelian groups.

\begin{question}
Is it true in every countable abelian group $(G,+)$ that every set $A$ of positive upper Banach density contains $B_1 + \cdots + B_k$ for infinite sets $B_1,\dots,B_k \subset G$?
\end{question}

The techniques of~\cite{KMRR} together with work of Griesmer~\cite{GriesmerThesis} may suffice to give a positive answer in the case $G = \Z^d$.
In other settings one cannot proceed in the same way, because the analogous ergodic structure theory is not currently available.

\begin{conjecture}
\label{q:bbt_abelian}
Let $(G,+)$ be a countable abelian group. Every set $A$ of positive upper Banach density contains $B \oplus B + t$ for an infinite set $B \subset G$ and some $t \in G$.
\end{conjecture}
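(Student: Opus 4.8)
The plan is to import the ergodic-theoretic strategy of \cite{Host,KMRR,KMRR2} into the amenable-group setting. Since $(G,+)$ is countable abelian, hence amenable, it admits a \Folner{} sequence $\Phi$, and one begins with the Furstenberg correspondence principle: given $A\subset G$ with $\ubdens(A)>0$ there is a measure-preserving action $(X,\mathcal{B},\mu,(T_g)_{g\in G})$ of $G$ and a set $E\in\mathcal{B}$ with $\mu(E)=\ubdens(A)$ such that
\[
\ubdens\Big(\bigcap_{i=1}^{j}(A-g_i)\Big)\ \geq\ \mu\Big(\bigcap_{i=1}^{j}T_{g_i}^{-1}E\Big)
\qquad\text{for all }g_1,\dots,g_j\in G.
\]
It therefore suffices to produce $t\in G$ and an infinite set $B=\{b_1,b_2,\dots\}\subset G$ for which $\mu\big(\bigcap_{\{i,j\}\subseteq F}T_{b_i+b_j+t}^{-1}E\big)>0$ for every finite $F$; a greedy construction of $B$ along a decreasing chain of positive-measure sets (as in the proof of \cref{prop_U2_implies_no_shift_needed}), combined with Bergelson's intersectivity lemma for the final diagonalization, then yields $B\oplus B+t\subseteq A$.

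The core of the matter is the recurrence statement: at stage $n$, having fixed $b_1,\dots,b_{n-1}$ and a positive-measure set $E_{n-1}$ on which all earlier shifts are realized, one must find $b_n$ (and, at the outset, the correct $t$) so that $\mu\big(E_{n-1}\cap\bigcap_{i<n}T_{b_i+b_n+t}^{-1}E\big)>0$. Exactly as in the integer case this should be analyzed through the Kronecker factor $\pi\colon X\to Z$, with $Z$ a rotation on a compact abelian group: one first chooses $t$ so that the relevant fibrewise integral $\int \prod \mu_z\big((T_{\cdot+t}^{-1}E)_z\big)\,dm(z)$ is positive, then selects each $b_n$ inside a Bohr set of $Z$ (so that the rotation nearly returns and the fibre measures nearly align) and simultaneously inside a set of good relative mixing times for the extension $X\to Z$, so that the $\binom{|F|}{2}$ fibrewise factors decouple up to a controllable error. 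The pattern $B\oplus B+t$ — rather than merely $B+C$ as in \cite{Host,MRR} — survives the induction precisely because a sum $b_i+b_j$ of two Bohr returns is again a Bohr return; the congruence obstructions that force the shift (the abelian analogue of the odd numbers, coming from finite quotients of $G$) are absorbed into $t$ as in \cref{prop:equivalences-for-shifts}.

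The decisive obstacle is the one already flagged after \cref{q:bbt_abelian}: for an arbitrary countable abelian group the ergodic structure theory needed to make the previous paragraph rigorous is not presently available. The Kronecker factor alone does not suffice, because the extension $X\to Z$ need not be relatively weakly mixing; one needs a Host--Kra-type tower of characteristic factors (nilfactors when $G=\Z$), relative mixing at the top of the tower, and the stability of "structured times" under addition across the whole tower. This theory is complete for $G=\Z$, and for $G=\Z^d$ it should — combined with \cite{GriesmerThesis} and the techniques of \cite{KMRR2} — already settle \cref{q:bbt_abelian}; for general $G$ it is open. I would therefore proceed in two stages: first establish the $\Z^d$ case by this route; then, for general $G$, either reduce to a finitely generated subgroup when the \Folner{} structure of $A$ permits it, or develop the requisite structure theory for measure-preserving actions of $G$ (in the spirit of recent work on $\bigoplus_p\Z/p\Z$-actions and more general abelian group actions). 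This last step — building the structure theory — is where the real difficulty lies and is the step I expect to be the main obstacle.
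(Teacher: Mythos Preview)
The statement you are addressing is a \emph{conjecture} in the paper, not a theorem; the paper offers no proof, and indeed flags (in the paragraph immediately preceding \cref{q:bbt_abelian}) that ``the analogous ergodic structure theory is not currently available'' for general countable abelian groups. So there is nothing in the paper to compare your argument against.

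Your proposal is not a proof but a research outline, and you are candid about this: you correctly isolate the missing ingredient as a Host--Kra-type structure theory for measure-preserving $G$-actions, and you note that the $\Z^d$ case may already be within reach via \cite{GriesmerThesis} and the methods of \cite{KMRR,KMRR2}. That assessment matches the paper's own remarks. Two small corrections: the structure-theory obstacle is flagged \emph{before} \cref{q:bbt_abelian} in the paper, not after; and the paper's suggestion that \cite{KMRR} plus \cite{GriesmerThesis} handles $\Z^d$ is made for the preceding question about $B_1+\cdots+B_k$, not for $B\oplus B+t$ --- the latter, even for $\Z^d$, would require the full machinery of \cite{KMRR2}, and the paper does not explicitly claim that this goes through. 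Your sketch of the Kronecker-factor analysis is plausible heuristics but, as you acknowledge, the real work (and the genuine gap) is the unavailability of the higher-order characteristic factors for general abelian $G$; until that is built, the inductive step you describe cannot be carried out.
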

Since the circulation of a preprint of this paper, \cref{q:bbt_abelian} has been shown to be false as stated. Indeed, by combining \cite[Corollary~1.12]{CM} with \cite[Theorem~1.3]{ackelsberg} it follows that the conclusion of \cref{q:bbt_abelian} holds for an abelian group $G$ if and only if the subgroup $\{g+g:g\in G\}$ has finite index. 

When $G=\Z$, we can restrict the value of $t$ to lie in $\{0,1\}$ (see \cref{prop:equivalences-for-shifts}).
We would like to know if similar restrictions can be placed in other groups.

\begin{question}
Assuming that \cref{q:bbt_abelian} holds, what values of the shift $t$ suffice?
\end{question}

We suspect that there is a direct relation between coset representatives of the subgroup $\{g+g: g\in G\}$ and the restrictions that can be placed on the shift $t$. 
This is already hinted at by \cref{prop:equivalences-for-shifts} in $\Z$.
The case where $G$ is the direct sum of infinitely many copies of a finite cyclic group sheds some further light on this issue.
When $G$ is the direct sum of infinitely many copies of $\Z/3\Z$ one can always take $t=0$ because all elements in this group are divisible by $2$. 
In contrast, if $G$ is the direct sum of infinitely many copies of $\Z/2\Z$ then one can consider the set of all elements in $G$ whose Hamming distance to the origin is odd, yielding a set of positive density that does not contain $B\oplus B$ for infinite $B$, showing that one can not always take $t=0$ in this case.

\subsection{Analogs in general amenable groups}
\label{sec:amenable}
Many questions in combinatorial number theory have analogs in countable amenable groups.
Recall that amenability of a countable group $(G,\cdot)$ is characterized by the existence of a \define{\Folner{} sequence}: a sequence $N \mapsto \Phi_N$ of finite, nonempty subsets of $G$ satisfying
\[
\lim_{N \to \infty} \dfrac{|\Phi_N \cap g \Phi_N|}{|\Phi_N|} = 1 = \lim_{N \to \infty} \dfrac{|\Phi_N g \cap \Phi_N|}{|\Phi_N|} 
\]
for all $g \in G$.
Given a \Folner{} sequence, one speaks of the upper density of a set $A \subset G$ with respect to a \Folner{} sequence
\[
\udens_\Phi(A) = \limsup_{N \to \infty} \dfrac{|A \cap \Phi_N|}{|\Phi_N|}
\]
and can enquire about those configurations that are guaranteed in any set whose upper density is positive.
The main result of~\cite{MRR} holds in this context: every subset of a countable, amenable group with positive upper density contains the product
\[
B \cdot C = \{ bc : b \in B, c \in C \}
\]
of two infinite sets $B,C \subset G$ (\cite[Theorem 1.3]{MRR}).

At this level of generality, it is not clear how to formulate appropriate analogs of $B \oplus B + t$ and $B+C+D$.
We collect here some questions refining~\cite[Theorem 1.3]{MRR} in various directions.

\begin{question}
\label{q:false1}
If $G$ is a countable amenable group and $A\subset G$ has positive upper-density with respect to a \Folner{} sequence, is there an injective sequence $b:\N\to G$ and $t \in G$ with
\[
\{ b(i) b(j) : i < j \}
\]
contained in $t^{-1}A$?
\end{question}

\begin{question}
\label{q:false2}
If $A\subset G$ has positive upper-density with respect to a \Folner{} sequence, is there an injective sequence $b:\N\to G$ and $s,t \in G$ with
\[
\{ b(i)b(j) : i < j \} \cup \{ b(i) b(j) : i > j \} \}
\]
contained in $t^{-1} E s^{-1}$?
\end{question}

The recent results~\cite[Theorem~1.5]{CM} show that the answer to both Questions~\ref{q:false1} and~\ref{q:false2} are false,  and give partial positive results in classifying for which groups this holds. It would be interesting to find such a classification.

\begin{question}
For which countable amenable groups $G$ is it true that whenever $A\subset G$ has positive upper density with respect to a \Folner{} sequence, there are infinite sets $B,C,D \subset G$ with $BCD \subset A$?
\end{question}

By a \define{left \Folner{}} sequence we mean a sequence $N \mapsto \Phi_N$ of finite, nonempty subsets of $G$ satisfying
\[
\lim_{N \to \infty} \dfrac{|\Phi_N \cap g \Phi_N|}{|\Phi_N|} = 1
\]
for all $g \in G$.
One analogously defines \define{right \Folner{}} sequences.

\begin{question}
What are the answers to the above questions if one only assumes $A$ has positive upper density with respect to a left \Folner{} sequence or a right \Folner{} sequence?
\end{question}

In a non-abelian group $(G,\cdot)$ there are two versions of finite sumsets: given a sequence $n \mapsto g(n)$ in $G$ one can define
\begin{align*}
\fpl(g) &= \{ g(i_1) \cdots g(i_r) : i_1 < \cdots < i_r \} \\
\fpr(g) &= \{ g(i_r) \cdots g(i_1) : i_r  > \cdots > i_1 \}
\end{align*}
for the left and right finite products sets defined by $g$.
Both types of product set are partition regular by Hindman's theorem.
It is immediate that every $\fpl(g)$ contains $\{ g(i) g(j) : i < j \}$ for some unbounded sequence $n \mapsto g(n)$ in $G$.
Unlike the special case of abelian groups, in general finite products sets need not contain $BC$ with $B,C \subset G$ infinite.
In particular, we do not know how to answer the following question in full generality.

\begin{question}
Fix a countable group $(G,\cdot)$.
Does every finite partition of $G$ have a cell that contains $BC$ for infinite sets $B,C \subset G$?
\end{question}

We have drawn several parallels in the integers between results about sumsets and results about arithmetic progressions.
We conclude this section by drawing one more such parallel, posing a question on infinite patterns in cartesian products of amenable groups based on Austin's celebrated result~\cite{austin2016} on corners.
Given a \Folner{} sequence $N \mapsto \Phi_N$ on $G$ write $\Phi^d$ for the \Folner{} sequence $N \mapsto \Phi_N \times \cdots \times \Phi_N$ on $G^d$.

\begin{theorem}[{\cite[Corollary after Theorem~B]{austin2016}}]
Let $G^d$ be the direct product of $d$ copies of an amenable group $(G,\cdot)$ and let $N\mapsto \Phi_N$ be a left \Folner{} sequence in $G$.
If $A\subset G^d$ satisfies $\udens_{\Phi^d}(A)>0$ then 
\[
\begin{split}
\udens_{\Phi}\Bigl(\bigl\{& g\in G: \text{there exists } (a_1,a_2\ldots, a_d)\in G^d~\text{such that}~(a_1,a_2\ldots,a_d),
\\
&(ga_1,a_2\ldots,a_d),(ga_1,ga_2\ldots,a_d),\ldots,  (ga_1,\ldots,ga_d)\in A\bigr\}\Bigr)>0.
\end{split}
\]
\end{theorem}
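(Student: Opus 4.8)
The natural strategy is to combine a Furstenberg-type correspondence principle with the amenable-group analogue of the multiple-recurrence argument that underlies the corners theorem, and then to run an induction on $d$.

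First I would set up the correspondence. Write $\gamma_k(g)=(g,\dots,g,e,\dots,e)\in G^d$ with the first $k$ coordinates equal to $g$, so that the $(d+1)$-point corner generated by $\mathbf a=(a_1,\dots,a_d)$ and $g\in G$ is $\{\gamma_0(g)\mathbf a,\dots,\gamma_d(g)\mathbf a\}$. Since $\Phi^d=(\Phi_N^d)_N$ is a left \Folner{} sequence in $G^d$, one obtains in the usual way a $G^d$-invariant probability measure $\mu$ on $X=\{0,1\}^{G^d}$ (a weak-$*$ limit of the empirical measures attached to $\one_A$ along a subsequence realising $\udens_{\Phi^d}(A)=:\delta>0$) together with a set $E\subset X$ with $\mu(E)=\delta$, such that for every $g\in G$ the transformations $T_k^g$ of $X$ induced by left translation by $\gamma_k(g)$ (so $T_0^g=\mathrm{id}$, as $\gamma_0\equiv e$) satisfy
\[
\udens_{\Phi^d}\bigl(\{\mathbf a: \gamma_k(g)\mathbf a\in A\text{ for all }k\}\bigr)\ \ge\ \mu\bigl(E\cap T_1^g E\cap\cdots\cap T_d^g E\bigr).
\]
Whenever the right-hand side is positive the set on the left is nonempty, so $g$ lies in the set appearing in the theorem. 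A one-line averaging argument (each summand is $\le 1$) then reduces the whole statement to the recurrence estimate
\[
\liminf_{N\to\infty}\ \frac{1}{|\Phi_N|}\sum_{g\in\Phi_N}\mu\bigl(E\cap T_1^g E\cap\cdots\cap T_d^g E\bigr)\ >\ 0.
\]

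Proving this estimate is the heart of the matter; it is exactly the amenable-group corners (multidimensional \Szemeredi{}) theorem, and I would establish it by induction on $d$. The base case $d=1$ is Khintchine-type recurrence: by the mean ergodic theorem for the $G$-action $g\mapsto T_1^g$, the averages converge to $\int(\mathbb E[\one_E\mid\mathcal I])^2\,d\mu\ge\mu(E)^2>0$, where $\mathcal I$ is the $\sigma$-algebra of $T_1$-invariant sets. For the inductive step one must (i) prove $L^2(\mu)$-convergence of the multilinear averages $\frac1{|\Phi_N|}\sum_{g\in\Phi_N}(f_1\circ T_1^g)\cdots(f_d\circ T_d^g)$, using the van der Corput lemma along a \Folner{} sequence together with a PET-type induction on the ``directional complexity'' of the tuple $(\gamma_1,\dots,\gamma_d)$, and (ii) pass to an appropriate extension of $(X,\mu)$ — a ``sated'' (pleasant) extension in the sense of Austin — on which the limit depends only on the projections of $\one_E$ onto an explicit characteristic factor. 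On that factor the relevant sub-actions become relatively isometric, so the corner count for $d$ directions can be reduced to the corner count for $d-1$ directions over a factor where $\one_E$ still has positive conditional density, which closes the induction and produces the strict lower bound.

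The main obstacle is step (ii), together with the interplay between it and the non-commutativity of $G$: when $G$ is non-abelian the transformations $T_1^g,\dots,T_d^g$ do not commute, the ``difference'' transformations produced by van der Corput are not $G$-actions of the naive form, and $\Z$-spectral theory is unavailable, so the construction and analysis of characteristic factors must be carried out entirely within the relative, infinitary machinery of sated extensions (and one must still verify that the differencing terminates and isolates a genuinely characteristic factor). An alternative that bypasses the ergodic structure theory is to encode corners of $A$ as simplices in a $(d+1)$-partite hypergraph built from copies of $G$ and invoke an amenable-group version of the infinitary hypergraph removal lemma: the diagonal corners (those with $g=e$) force a positive density of simplices that cannot all be destroyed by deleting few edges, hence non-diagonal corners must exist for a positive-density set of $g$. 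Either way, the content that cannot be avoided is a removal lemma, or equivalently a characteristic-factor theorem, valid over an arbitrary countable amenable group.
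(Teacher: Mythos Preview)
The paper does not prove this statement at all: it is quoted verbatim as a result of Austin (the citation \texttt{[Corollary after Theorem~B]} in \cite{austin2016}), stated only to motivate \cref{q:amenable_cartesian}. There is therefore no ``paper's own proof'' to compare your proposal against.

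That said, your sketch is a fair outline of the actual argument in \cite{austin2016}: Furstenberg correspondence to reduce to a multiple-recurrence lower bound for the commuting $G$-actions $T_k^g$ given by left translation by $\gamma_k(g)$, and then Austin's machinery of sated extensions to obtain characteristic factors on which the limit can be controlled inductively. You are also right to flag step~(ii) as the real content; the rest is routine. One correction of emphasis: the non-commutativity of $G$ is not the main difficulty here, since the relevant actions $T_1,\dots,T_d$ on $G^d$ do commute with one another (they act on disjoint blocks of coordinates), and Austin's framework is set up precisely for commuting actions of a fixed amenable group. The genuine work is building the sated extension and identifying the characteristic factor, not coping with non-commuting transformations.
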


This theorem gives, for example, that every set $A \subset G \times G$ with $\udens_{\Phi \times \Phi}(A) > 0$ contains many sets of the form $\{ (a,b), (ga,b), (ga,gb) \}$. The following question is an attempt to ask a sumset analog, which we only state in $G \times G$ for convenience, but which can be generalized to $G^d$ for $d> 2$ in a straightforward manner.

\begin{question}
\label{q:amenable_cartesian}
Let $(G,\cdot)$ be an amenable group and let $N\mapsto \Phi_N$ be a left \Folner{} sequence in $G$. If $A\subset G^2$ satisfies
\begin{equation}
\label{eqn:pild2}
\liminf_{g\to\infty}
\liminf_{M\to\infty}
\dfrac{1}{|\Phi_M|} \sum_{h \in \Phi_M}
\one_A(g,h)
>0,
\end{equation}
where $g \to \infty$ means leaving all finite sets, 
then there exists $(t,s)\in G^2$ and an injective sequence $b\colon \N\to G$ such that
\[
\{(t,s),~(b(i)t,s),~(b(i)t,b(j)s): i< j\}\subset A.
\]
\end{question}

If $G=\N$,  then an affirmative answer to \cref{q:amenable_cartesian} implies an affirmative answer to \cref{q_bxbxt2}.

\subsection{Ultrafilters}

We conclude with one question regarding ultrafilters on $\N$. 
An \emph{ultrafilter} is a non-empty collection $\ultra{p}$ of subsets of $\N$ closed under intersections and supersets which satisfies $A\notin\ultra{p}\iff(\N\setminus A)\in\ultra{p}$ for every $A\subset\N$. An ultrafilter is called \emph{non-principal} if all sets belonging to the ultrafilter are infinite.

Recall that one defines
\[
\ultra{p} + \ultra{q} = \{ A \subset \N : \{ n \in \N : A - n \in \ultra{q} \} \in \ultra{p} \}
\]
whenever $\ultra{p}$ and $\ultra{q}$ are ultrafilters on $\N$. 
While associative, this operation on the set of all ultrafilters is not commutative.
For exposition on ultrafilters and their role in Ramsey theory see, for instance, \cite[Section 3]{Bergelson96} or~\cite{Hindman-Strauss12}.

A set $A\subset \N$ contains $B+C$ for infinite sets $B,C \subset \N$ if and only if there are non-principal ultrafilters $\ultra{p}$ and $\ultra{q}$ with $A$ in both $\ultra{p} + \ultra{q}$ and $\ultra{q} + \ultra{p}$. 
It therefore follows from \cref{thm:MRR} that every positive density set belongs to such a pair of ultrafilters.

\begin{question}
\label{q_ultra}
For which sets $A \subset \N$ does one have
\[
A \in \ultra{p} + \ultra{q} = \ultra{q} + \ultra{p}
\]
for some pair $\ultra{p},\ultra{q}$ of non-principal ultrafilters?
\end{question}

The answer to \cref{q_ultra} is positive when $A\subset\N$ is a piecewise syndetic set (see  \cref{def:thick_synd_ps}). Indeed, in view of~\cite[Theorem 4.43]{Hindman-Strauss12} there exists $s\in\N$ such that $A-s$ contains an idempotent ultrafilter $\ultra{p}=\ultra{p}+\ultra{p}$ and hence taking $\ultra{q}=\ultra{p}+s$ yields $A \in \ultra{p} + \ultra{q} = \ultra{q} + \ultra{p}$ as desired. We do not know the answer for sets of positive density, which is perhaps the most interesting aspect of the question.

This question also makes sense in arbitrary groups, if one multiplies the ultrafilters in the appropriate way (see~\cite[Page~642]{MRR}).

\begin{appendices}

\section{Uniformity norms and weak mixing functions}

Fix a triple $(X,\mu,T)$,  where $\mu$ is a $T$-invariant probability measure on $X$. 
A function $f\in\lp^2(X,\mu)$ is \define{weak mixing} if
\[
\lim_{N \to \infty} \dfrac{1}{N} \sum_{n=1}^N | \langle T^n f, g \rangle| = 0
\]
for every $g \in \lp^\infty(X,\mu)$.
(Note that this makes sense even when the system  is not ergodic).
The function $f$ being weak mixing is equivalent to saying that
\[
\{ n \in \N : |\langle T^n f, g \rangle| < \varepsilon \}
\]
has full density for every $\varepsilon > 0$ and every $g \in \lp^\infty(X,\mu)$.
We recall here that if $f$ in $\lp^\infty(X,\mu)$ has  vanishing second Host-Kra seminorm (see~\cite[Chapter 8, Section 1]{HK-book}),
then $f$ is weak mixing.
To see this, combine
\[
\langle T^n f, g \rangle = \int \E(T^n f \cdot \overline{g} \mid \mathcal{I} ) \, d\mu,
\]
where $\E(\cdot \mid \mathcal{I})$ denotes the conditional expectation on the invariant $\sigma$-algebra $\mathcal{I}$, with the Cauchy-Schwarz inequality to obtain
\begin{align*}
\left( \lim_{N \to \infty} \dfrac{1}{N} \sum_{n=1}^N | \langle T^n f, g \rangle| \right)^2
& \le
\lim_{N \to \infty} \dfrac{1}{N} \sum_{n=1}^N \int | \E( T^n f \cdot \overline{g} \mid \mathcal{I} ) |^2 \,d \mu
\\ = 
\lim_{N \to \infty} \dfrac{1}{N} \sum_{n=1}^N
& 
\lim_{M \to \infty} \dfrac{1}{M} \sum_{m=1}^M
\int T^n f \cdot \overline{g} \cdot T^{n+m} \overline{f} \cdot T^m g \,d \mu
\end{align*}
then note that this last expression is zero whenever the second Host-Kra seminorm of $f$ is zero by~\cite[Theorem~8.13]{HK-book}.

We next present the analog of this result for the $\ell^\infty$ seminorms in \cref{sec_uniformity_norms}.
Fix a \Folner{} sequence $\Phi$, let $f\colon \Z \to \R$ be bounded and assume that $f$ admits correlations along $\Phi$ and satisfies $\ghkn{f}_\un{2}{\Phi} = 0$.
We check that
\[
\lim_{N \to \infty} \dfrac{1}{N} \sum_{n=1}^N \left| \lim_{J \to \infty} \dfrac{1}{|\Phi_J|} \sum_{j \in \Phi_J} f(n+j) g(j) \right| = 0
\]
holds for every bounded function $g \colon  \Z \to \R$.
First estimate
\begin{align*}
& \left| \lim_{J \to \infty} \dfrac{1}{|\Phi_J|}  \sum_{j \in \Phi_J} f(j+n)g(j) \right|^2
\\
\le
&{}
\limsup_{M \to \infty} \left| \dfrac{1}{M} \sum_{m=1}^M \lim_{J \to \infty} \dfrac{1}{|\Phi_J|} \sum_{j \in \Phi_J} f(j+n+m) g(j+m) f(j+n) g(j) \right|
\end{align*}
using the van der Corput  inequality.
The limit as $M \to \infty$ of the quantity inside $| \cdot |$ is non-negative, and so we may dispense in the limit with the absolute values.
Then taking the \Cesaro{} average in $N$, we obtain
\begin{align*}
&
\lim_{N \to \infty} \dfrac{1}{N} \sum_{n=1}^N \left| \lim_{J \to \infty} \dfrac{1}{|\Phi_J|} \sum_{j \in \Phi_J} f(j+n) g(j) \right|^2
\\
\le
&
\lim_{N \to \infty} \dfrac{1}{N} \sum_{n=1}^N
\lim_{M \to \infty} \dfrac{1}{M} \sum_{m=1}^M
\lim_{J \to \infty} \dfrac{1}{|\Phi_J|} \sum_{j \in \Phi_J}
f(j+n+m) g(j+m) f(j+n) g(j)
\end{align*} 
and \Holder{}'s inequality shows that this last term is zero whenever $\ghkn{f}_\un{2}{\Phi}$ equals zero.
This allows us to conclude the following.  
\begin{lemma}\label{lemma_weakmixingisback}
    If $A \subset \N$ admits correlations along $\Phi$ and $\ghkn{\one_A - \delta}_\un{2}{\Phi} = 0$, then for every $B \subset \N$ we have
\[
\dens^*(\{ n \in \N : |\dens((A - n) \cap B) - \dens(A) \, \dens(B)| > \varepsilon \})=0.
\]
%has full density.
\end{lemma}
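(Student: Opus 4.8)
The plan is to read the lemma off from the computation carried out in the paragraph immediately preceding its statement. Put $\delta = \dens_\Phi(A)$ and apply that computation with $f = \one_A - \delta$ and $g = \one_B$: both are bounded, $f$ admits correlations along $\Phi$ because $A$ does, and $\ghkn{f}_{\un{2}{\Phi}} = \ghkn{\one_A - \delta}_{\un{2}{\Phi}} = 0$ by hypothesis. Since $f(j+n)\,g(j) = \one_{(A-n)\cap B}(j) - \delta\,\one_B(j)$, for each $n \in \N$ one has (assuming, as discussed below, that the relevant averages along $\Phi$ converge)
\[
\lim_{J\to\infty}\frac{1}{|\Phi_J|}\sum_{j\in\Phi_J} f(j+n)\,g(j) = \dens_\Phi\big((A-n)\cap B\big) - \dens_\Phi(A)\,\dens_\Phi(B),
\]
so the conclusion of the preceding computation becomes
\[
\lim_{N\to\infty}\frac{1}{N}\sum_{n=1}^N\Big|\dens_\Phi\big((A-n)\cap B\big) - \dens_\Phi(A)\,\dens_\Phi(B)\Big| = 0.
\]

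The remaining step is to upgrade this \Cesaro{} decay of the absolute values to a density statement about the exceptional set, which is done by Markov's inequality. Fixing $\varepsilon > 0$ and writing $E_\varepsilon = \{ n \in \N : |\dens_\Phi((A-n)\cap B) - \dens_\Phi(A)\,\dens_\Phi(B)| \ge \varepsilon \}$, one has
\[
\frac{\big|E_\varepsilon \cap \{1,\dots,N\}\big|}{N} \le \frac{1}{\varepsilon N}\sum_{n=1}^N\Big|\dens_\Phi\big((A-n)\cap B\big) - \dens_\Phi(A)\,\dens_\Phi(B)\Big|,
\]
and the right-hand side tends to $0$ as $N\to\infty$. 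Hence $E_\varepsilon$ has density zero and its complement — which is the set appearing in the lemma — has full density.

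I do not expect a serious obstacle here: the substance is the van der Corput/\Holder{} estimate already displayed in the excerpt, and what is left is the identification of the inner limit by linearity together with the elementary Markov step above. The one point that needs care is the standing assumption that all the densities along $\Phi$ occurring in the argument actually exist; in line with the phrasing of \cref{prop_U2_implies_no_shift_needed}, this should be imposed on $B$ explicitly. Alternatively, one first replaces $\Phi$ by a sub-\Folner{} sequence $\Psi$ along which $\one_A$ and $\one_B$ jointly admit correlations — such a $\Psi$ always exists — noting that this preserves $\dens_\Psi(A) = \delta$ and $\ghkn{\one_A - \delta}_{\un{2}{\Psi}} = 0$, so that the argument runs verbatim along $\Psi$ and delivers the conclusion with all densities taken along $\Psi$.
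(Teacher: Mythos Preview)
Your proposal is correct and follows exactly the route the paper intends: the lemma is stated immediately after the displayed van der Corput/\Holder{} computation with the phrase ``This allows us to conclude the following,'' and you have supplied precisely the missing specialization $f=\one_A-\delta$, $g=\one_B$ together with the Markov step to pass from \Cesaro{} decay of absolute values to full density of the good set. Your remark about passing to a sub-\Folner{} sequence to ensure the relevant correlations exist is also in line with how the paper handles this issue (cf.\ the phrasing in \cref{prop_U2_implies_no_shift_needed}).
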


\section{Conditional results in the primes}
\label{appendixDHL}

In this section we show that \cref{conjecture_primestrueHindman}, postulating infinite patterns in the primes, is implied by the well known Dickson-Hardy-Littlewood conjecture. We begin by recalling the notion of admissible set, needed to formulate the latter conjecture.

\begin{definition}\label{def_admissible}
    A (finite or infinite) set $H\subset\N$ is called \emph{admissible} if for every prime $p\in\P$ there exists $i=i(p)\in\N$ such that no element of $H$ is congruent to $i\bmod p$.
\end{definition}
\begin{conjecture}[Dickson-Hardy-Littlewood conjecture]\label{conj_hardylittlewood}
If $H\subset\N$ is finite and admissible, then there exist infinitely many $n\in\N$ such that $n+H\subset\P$.
\end{conjecture}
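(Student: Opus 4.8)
The statement is the prime $k$-tuples conjecture of Dickson and of Hardy--Littlewood, so no unconditional proof is available; what follows describes the heuristic mechanism a proof is expected to follow, together with the point at which every known approach stalls.

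The plan is to run the Hardy--Littlewood circle method. Fix an admissible $H=\{h_1<\cdots<h_k\}$ and, with $\Lambda$ the von Mangoldt function, study the weighted count
\[
R(N)=\sum_{n\le N}\Lambda(n+h_1)\cdots\Lambda(n+h_k);
\]
a lower bound $R(N)\gg_H N(\log N)^{-k}$ for all large $N$ immediately yields infinitely many $n$ with $n+H\subset\P$. One expands each factor by Fourier analysis modulo a suitable $N'$ and splits the resulting exponential integral into \emph{major arcs} (neighbourhoods of rationals $a/q$ with $q$ below a slowly growing threshold) and \emph{minor arcs}. On the major arcs, the Siegel--Walfisz theorem on the distribution of primes in arithmetic progressions --- or, in the linear-forms-in-primes formalism of Green, Tao and Ziegler, a $W$-trick reduction combined with the same equidistribution input --- produces the anticipated main term
\[
\mathfrak{S}(H)\,\frac{N}{(\log N)^{k}},\qquad
\mathfrak{S}(H)=\prod_{p}\Bigl(1-\tfrac1p\Bigr)^{-k}\Bigl(1-\tfrac{\nu_H(p)}{p}\Bigr),
\]
where $\nu_H(p)=\bigl|\{h_i\bmod p:1\le i\le k\}\bigr|$. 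Admissibility is precisely the statement that $\nu_H(p)<p$ for every prime $p$, whence $\mathfrak{S}(H)>0$ and this putative main term has the required size.

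The decisive step --- and the reason the conjecture is open --- is to show that the minor-arc contribution is $o\bigl(N(\log N)^{-k}\bigr)$. For $k=1$ this is Vinogradov's estimate for exponential sums over primes; for $k\ge 2$ the integrand is a product of $k$ prime exponential sums with no power saving available, and bounding it is essentially equivalent to understanding the correlations $\sum_{n\le N}\Lambda(n+h_1)\cdots\Lambda(n+h_k)$ directly --- exactly what the parity obstruction of sieve theory forbids, and out of reach of the Gowers-norm machinery because the system $n\mapsto(n+h_1,\dots,n+h_k)$ has a single free parameter (the case $k=2$ is the twin prime conjecture). I do not expect to surmount this; a genuinely new input, beyond the inverse theorems for the Gowers norms and beyond bilinear-forms estimates for $\Lambda$, would be required. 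The strongest unconditional substitute is the Goldston--Pintz--Yildirim and Maynard--Tao sieve, which replaces $\Lambda$ by a suitable majorant and yields, for every $k$, some admissible $H$ with $|H|=k$ together with infinitely many $n$ for which $\gg\log k$ of the $n+h_j$ are prime; this underlies the bounded-gaps results of Zhang, Maynard and Polymath8 cited above, and it suffices for the conditional consequences drawn from \cref{conj_hardylittlewood} in this appendix, but it falls far short of forcing all of $n+h_1,\dots,n+h_k$ to be prime simultaneously.
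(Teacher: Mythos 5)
The statement you were asked about is \cref{conj_hardylittlewood}, which the paper presents as a \emph{conjecture}, not a theorem: the paper offers no proof of it, and instead uses it as a hypothesis in \cref{lemma_DHLinprogressions} and \cref{thm_BBtprimes1} to derive \cref{conjecture_primestrueHindman} conditionally. You correctly recognized that no proof exists and declined to fabricate one, instead giving an accurate account of the circle-method heuristic, the role of admissibility in ensuring the singular series $\mathfrak{S}(H)$ is positive, the parity obstruction that blocks the minor-arc bound for $k\ge 2$, and the partial progress via GPY/Maynard--Tao sieves and Zhang--Maynard--Polymath. This is the right call and the content is accurate; there is nothing in the paper to compare it against, since the paper itself treats the statement as an open problem.

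One small precision worth adding if you expand this: the singular series as you wrote it, $\mathfrak{S}(H)=\prod_p(1-1/p)^{-k}(1-\nu_H(p)/p)$, converges because $\nu_H(p)=k$ for all but finitely many $p$, so each factor is $1+O(1/p^2)$ for large $p$; it is worth saying so explicitly, since otherwise the infinite product looks formally divergent. Also note that the paper's Appendix~\ref{appendixDHL} does not need the full strength of \cref{conj_hardylittlewood}; it actually needs the mild strengthening in \cref{lemma_DHLinprogressions} (prescribing a congruence class for $n$), which the authors show follows from \cref{conj_hardylittlewood} by augmenting the admissible set.
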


\begin{theorem}
\label{thm_BBtprimes1}
    \cref{conj_hardylittlewood} implies \cref{conjecture_primestrueHindman}.
\end{theorem}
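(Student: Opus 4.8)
The plan is to build $B$ greedily as $\{b_1<b_2<\cdots\}$, invoking \cref{conj_hardylittlewood} once per step, with the bulk of the work going into arranging that the finite tuples to which the conjecture is applied are admissible. I will describe the argument for $\P-1$; the statement for $\P+1$ is identical after replacing $+1$ by $-1$ everywhere, since admissibility of a finite set is translation invariant.

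First I would set up the recursion. Suppose $b_1<\cdots<b_m$ have been chosen so that $\sum_{n\in F}n+1\in\P$ for every nonempty $F\subseteq\{b_1,\dots,b_m\}$ with $|F|\le k$. Writing a subset $F\ni b_{m+1}$ as $\{b_{m+1}\}\sqcup F'$ with $F'\subseteq\{b_1,\dots,b_m\}$ and $0\le|F'|\le k-1$, appending $b_{m+1}$ with the same property amounts to requiring $b_{m+1}+H_m\subseteq\P$, where
\[
H_m:=1+D_m,\qquad D_m:=\Bigl\{\textstyle\sum_{n\in F'}n:\ F'\subseteq\{b_1,\dots,b_m\},\ 0\le|F'|\le k-1\Bigr\},
\]
the empty sum contributing $0$, so $1\in H_m$. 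If $H_m$ is admissible, \cref{conj_hardylittlewood} yields infinitely many $n$ with $n+H_m\subseteq\P$, and I take $b_{m+1}$ to be such an $n$ with $b_{m+1}>b_m$; the limit set $B$ then has the desired property, since any finite $F\subseteq B$ with $|F|\le k$ lies in some $\{b_1,\dots,b_m\}$.

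So the real content is keeping $H_m$ admissible, and I would do this by maintaining the stronger invariant that $D_m$ omits a residue modulo every prime (equivalently $H_m=1+D_m$ does, hence is admissible). For $p>N_m:=\sum_{j=0}^{k-1}\binom mj\ge|D_m|$ this is automatic, and it is trivially preserved modulo any prime $p$ for which all of $b_1,\dots,b_m$ are $\equiv0$, so at each step only finitely many primes, all bounded in terms of $m$ and $k$, need attention. Concretely, passing from $\{b_1,\dots,b_m\}$ to $\{b_1,\dots,b_{m+1}\}$ one has $D_{m+1}=D_m\cup(b_{m+1}+D_m')$ with $D_m'\subseteq D_m$ the subset of sums of size $\le k-2$; requiring $b_{m+1}\equiv0\pmod p$ for every prime $p\le N_m$ freezes $D_{m+1}\equiv D_m\pmod p$ and preserves the invariant there, while for the finitely many primes $N_m<p\le N_{m+1}$ the residues $c$ for which $D_m\cup(c+D_m')$ fills $\Z/p\Z$ number at most $|D_m'|<p$, so an admissible residue for $b_{m+1}$ modulo such a $p$ is available. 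All told, this prescribes $b_{m+1}$ in a single residue class modulo $Q_m:=\prod_{p\le N_{m+1}}p$.

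The hard part will be reconciling this congruence constraint with \cref{conj_hardylittlewood}, which as stated produces some $n$ with $n+H_m\subseteq\P$ but gives no control over $n\bmod Q_m$. Here I would follow the device used by Granville~\cite{Granville90} and by Tao and Ziegler~\cite{TZ23}: instead of $H_m$, apply \cref{conj_hardylittlewood} to an enlargement $H_m^+=H_m\cup G$, where the auxiliary elements $G$ are chosen, prime by prime $p\mid Q_m$, so that $\{-g\bmod p:g\in G\}$ sweeps out every residue class except the one into which we want $n$ to fall; the primality of each $n+g$ then forces $n$ into the prescribed class modulo $Q_m$, while one keeps $H_m^+$ admissible. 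Verifying that there is simultaneously enough room to pick the auxiliary residues in $G$, the residue into which $n=b_{m+1}$ is forced, and the residue choices for $b_{m+1}$ demanded by the invariant — all modulo finitely many explicitly bounded primes — is the technical core, and I expect it to be dispatched by the same bookkeeping as in the cited works. Granting it, the recursion runs and $B=\{b_1<b_2<\cdots\}$ is the set required by \cref{conjecture_primestrueHindman}.
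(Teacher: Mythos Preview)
Your proposal follows essentially the same route as the paper's proof: build $B$ greedily, maintain admissibility of the shift set $H_m$ by controlling residues of the $b_i$ modulo small primes, and reconcile those congruence constraints with the Dickson--Hardy--Littlewood conjecture via the Granville/Tao--Ziegler device of enlarging $H_m$ with auxiliary elements that force the desired residue class. The paper packages this last step as a standalone lemma (\cref{lemma_DHLinprogressions}), but the architecture is the same.

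One concrete point to flag: your specific choice of freezing $b_{m+1}\equiv 0\pmod p$ for primes $p\le N_m$ runs into a compatibility problem with the enlargement trick. For that device to produce an admissible $H_m^+$ (equivalently, for the hypothesis $(a+h,q)=1$ in the paper's lemma), one needs $a+h\not\equiv 0\pmod p$ for every $h\in H_m$; with $a=0$ this says $p\nmid h$ for all $h\in H_m=1+D_m$. But each $1+b_j$ is itself a prime by the induction hypothesis, so once $m$ is large enough that the prime $p=1+b_1$ satisfies $p\le N_m$, the element $h=1+b_1\in H_m$ has $p\mid h$ and the construction stalls. The paper avoids this by not insisting on $0$: for each prime $p$ it introduces a flexible residue $x_p$, chosen when $p$ first becomes relevant so that the stronger invariant
\[
p\nmid\Bigl(1+\ell x_p+\sum_{i\in F}b(i)\Bigr)\qquad\text{for all relevant }F,\ell
\]
holds, and then requires $b(n)\equiv x_p\pmod p$ for all subsequent $n$. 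This single invariant simultaneously maintains admissibility of $H_m$ \emph{and} the compatibility condition needed for the congruence-forcing lemma. Your acknowledgment that ``verifying there is enough room'' is the technical core is apt; it is precisely this more careful choice of $x_p$ (rather than $0$) that provides the room.
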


We first need a lemma imposing congruence restrictions in the shift $n$ arising in \cref{conj_hardylittlewood}.

\begin{lemma}\label{lemma_DHLinprogressions}
Let $H\subset\N$ be a finite admissible set, let $q$ be a squarefree number and let $a\in\N$ such that $(a+h,q)=1$ for all $h\in H$.
Assuming \cref{conj_hardylittlewood}, there exist infinitely many $n\in\N$ such that $n+H\subset\P$ and
$n\equiv a\bmod q$.
\end{lemma}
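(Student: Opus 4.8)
The plan is to enlarge $H$ to a finite admissible set $H'\supseteq H$ that is ``saturated modulo $q$'': for every prime $p\mid q$, the reduction $H'\bmod p$ should be exactly $(\Z/p\Z)\setminus\{-a\bmod p\}$. Granting such an $H'$, we apply \cref{conj_hardylittlewood} to get infinitely many $n\in\N$ with $n+H'\subset\P$, and then discard the finitely many $n$ with $n+\min H'\le q$. For any remaining $n$ and any prime $p\mid q$, every prime $n+h'$ with $h'\in H'$ exceeds $p$, so $p\nmid n+h'$; that is, $n\not\equiv -h'\pmod p$ for all $h'\in H'$. Since the residues $\{-h'\bmod p:h'\in H'\}$ run over all of $\Z/p\Z$ except $a\bmod p$, this forces $n\equiv a\pmod p$. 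As $q$ is squarefree, the Chinese remainder theorem yields $n\equiv a\pmod q$, while $n+H\subset n+H'\subset\P$ is automatic; so there are infinitely many such $n$.

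To construct $H'$, write $q=p_1\cdots p_r$ with the $p_j$ distinct primes. The hypothesis $(a+h,q)=1$ for all $h\in H$ says precisely that $H$ avoids the class $-a\bmod p_j$ for each $j$. For each $j$ and each residue $s\in(\Z/p_j\Z)\setminus\{-a\bmod p_j\}$ not already hit by $H$ modulo $p_j$, we adjoin one positive integer $e_{j,s}$, chosen via the Chinese remainder theorem so that $e_{j,s}\equiv s\pmod{p_j}$; so that $e_{j,s}\not\equiv -a\pmod{p_{j'}}$ for every $j'\neq j$; and so that $e_{j,s}\not\equiv m_p\pmod p$ for every prime $p\nmid q$ with $p\le |H|+q$, where $m_p$ is a residue missed by $H$ modulo $p$ (which exists since $H$ is admissible). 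Each such system of congruences, on pairwise coprime moduli, has infinitely many positive solutions, so the $e_{j,s}$ may be taken pairwise distinct and disjoint from $H$; let $H'$ be the union of $H$ with all the adjoined $e_{j,s}$. Note that the number of adjoined elements is at most $\sum_j(p_j-1)<q$, so $|H'|\le |H|+q$ independently of the choices made.

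It remains to check that $H'$ is admissible. By construction $H'\bmod p_j=(\Z/p_j\Z)\setminus\{-a\bmod p_j\}$ for each $j$: the adjoined elements cover exactly the classes other than $-a$ that $H$ missed, and neither $H$ nor any $e_{j,s}$ is $\equiv -a\pmod{p_j}$. This is the saturation property used above, and in particular $H'$ misses a class modulo each $p\mid q$. For a prime $p\nmid q$ with $p\le |H|+q$, the set $H'$ avoids $m_p\bmod p$ by the third condition on the $e_{j,s}$; and for $p>|H|+q\ge|H'|$, admissibility is automatic. Hence $H'$ is admissible, and the argument is complete. The only delicate point, and the main thing to get right, is this bookkeeping: one must saturate $H'$ modulo each $p_j\mid q$ without ever covering a full residue system modulo any prime, which is exactly why the residues of the $e_{j,s}$ are prescribed in advance both at the primes dividing $q$ and at the finitely many primes $p\nmid q$ below the a priori bound $|H|+q$.
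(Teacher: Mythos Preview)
Your proof is correct and follows the same strategy as the paper: enlarge $H$ to an admissible set $H'$ that is saturated modulo each prime $p\mid q$ (so that $H'\bmod p=(\Z/p\Z)\setminus\{-a\}$), apply \cref{conj_hardylittlewood} to $H'$, and read off $n\equiv a\pmod q$ from the fact that the primes $n+h'$ cannot be divisible by any $p\mid q$. The only difference is in how the extra elements are manufactured: the paper first shifts so that $0\in H$ and then adjoins elements of the form $\tfrac{Q}{p}\cdot x$ for a carefully chosen $Q$ with $Q/p\equiv 1\pmod p$, so that every new element is $\equiv 0$ modulo all other small primes and admissibility is inherited automatically from $H$; you instead prescribe the residue of each new element at every relevant prime directly via the Chinese remainder theorem. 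Your bookkeeping is slightly more hands-on but avoids the preliminary shift and the $Q$ construction, and the size bound $|H'|\le|H|+q$ you obtain is comparable to the paper's $|\tilde H|\le|H|+\sum_{p\mid q}p$.
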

\begin{proof}
By shifting $H$ we may assume without loss of generality that $0\in H$.
Let $N=|H|+\sum_{p|q}p$, where the sum runs over all prime divisors of $q$.
Let $\tilde Q=\prod_{p\leq N}p$ where the product is taken over all primes smaller than $N$, use the Chinese Remainder Theorem to find $L$ such that $L\equiv (\tilde Q/p)^{-1}\bmod p$ for every prime $p<N$ and let $Q=\tilde QL$.
Note that $Q/p\equiv1\bmod p$ for every prime $p<N$.

For each prime divisor $p$ of $q$, let $H_p=\big\{x\in\{0,\dots,p-1\}:x+a\not\equiv0\bmod p\big\}$, and let $$\tilde H=H\cup\bigcup_{p|q}\frac Qp\cdot H_p.$$
We claim that $\tilde H$ is admissible.
Assuming the claim for now, notice that if $n+\tilde H\subset\P$, then $n+H\subset\P$.
Moreover, if $n\not\equiv a\bmod q$, then for some prime $p|q$ we have $n\not\equiv a\bmod p$, and hence there is some $x\in H_p$ with $x+n\equiv0\bmod p$.
Therefore $h:=x\tfrac Q p\in \tilde H$ and satisfies $h+n\equiv0\bmod p$ which implies that $h+n$ is not a prime (unless $n<q$).
From \cref{conj_hardylittlewood}, we conclude that there exist infinitely many $n\in\N$ such that $n+H\subset\P$ and
$n\equiv a\bmod q$.

It remains to show that $\tilde H$ is admissible.
Note that
\[
|\tilde{H}| \le |H| + \sum_{p|q} |H_p| \le |H| + \sum_{p|q} p = N
\]
so for every prime $p>N$ we trivially have less than $p$ residue classes mod $p$ represented in $\tilde H$.
For each $p<N$, if $p$ does not divide $q$ then any $h\in\tilde H\setminus H$ is a multiple of $p$; since $0\in H$ and $H$ is admissible we conclude that there are less than $p$ residue classes mod $p$ represented in $\tilde H$.
Finally, if $p|q$, then for any $p'\neq p$, each element of $\tfrac Q{p'}\cdot H_{p'}$ is a multiple of $p$. Since no element of $H$ is congruent to $-a\bmod p$, no element of $H_p$ is congruent to $-a\bmod p$ and $\tfrac Qp\equiv1\bmod p$, we conclude that no element of $\tilde H$ is congruent to $-a\bmod p$.
\end{proof}

\begin{proof}[Proof of \cref{thm_BBtprimes1}]
Given $n,k\in\N$ denote by $[n]:=\{1,2,\dots,n\}$ and denote by $[n]^{<k}:=\{F\subset[n]:|F|<k\}$.
We stress that $[n]^{<k}$ includes the empty set.

Our goal (cf.\ \cref{conjecture_primestrueHindman}) is to construct, for a given fixed $k\in\N$, a strictly increasing sequence $b:\N\to\N$ such that
\begin{equation}\label{eq_proof_BBtprimesb}
    \forall n\in\N,\ \forall F\in[n]^{<k+1},\ F\neq\emptyset, \qquad \sum_{i\in F}b(i)\in\P-1.
\end{equation}

We construct additionally, for each prime $p$, some $x_p\in\N\cup\{0\}$ such that for every $n\in\N$ and $p\in\P$,
\begin{equation}\label{eq_proof_BBtprimes3b}
    p<k2^{n}\ \Rightarrow\ b(n)\equiv x_p\bmod p
\end{equation}
and
\begin{equation}\label{eq_proof_BBtprimes4b}
     \forall F\in[n]^{<k},\ \forall\ell\leq k-|F|,\qquad p\not|\left(1+\ell x_p+\sum_{i\in F}b(i)\right)
\end{equation}
both hold.

Let $b(1)\in\P-1$ be divisible by each prime $p< 2k$, and set $x_p:=0$ for those primes.
That such a $b(1)$ exists follows from Dirichlet's theorem.

Suppose now that $m>1$ and we have chosen $b(1),\dots,b(m-1)$ and $(x_p)_{p<k2^{m-1}}$  satisfying~\eqref{eq_proof_BBtprimesb}, \eqref{eq_proof_BBtprimes3b} and~\eqref{eq_proof_BBtprimes4b} for all $n<m$ and each prime $p<k2^{m-1}$.
For each prime $p\in(k2^{m-1},k2^{m})$ choose an arbitrary $x_p$ such that

\begin{equation}\label{eq_proof_BBtprimes5b}
    \forall F\in[m-1]^{<k},\ \forall\ell\leq k,\qquad p\not|\left(1+\ell x_p+\sum_{i\in F}b(i)\right).
\end{equation}
Note that there are at most $k2^{m-1}$ values of $x_p\bmod p$ that do \emph{not} satisfy~\eqref{eq_proof_BBtprimes5b}; therefore for each $p>k2^{m-1}$ we can choose a value of $x_p$ that does satisfy~\eqref{eq_proof_BBtprimes5b}.
Observe that this choice of $x_p$ satisfies~\eqref{eq_proof_BBtprimes4b} for all $n<m$ and $p<k2^{m}$.
Let $q=\prod_{p<k2^{m}}p$ and use the Chinese remainder theorem to find $a\in\N$ such that $a\equiv x_p\bmod p$ for every $p<k2^{m}$.
Next let
$$H:=\left\{1+\sum_{i\in F}b(i):F\in[m-1]^{<k}\right\}.$$
Note that $|H|\leq 2^{m-1}$.
It follows from~\eqref{eq_proof_BBtprimes3b} that for each $p<2^{m-1}\leq k2^{m-1}$, $|\{b(n)\bmod p:n\leq m-1\}|\leq1+\log_2(p/k)< p$ and hence that $H$ is an admissible set.
It follows from~\eqref{eq_proof_BBtprimes4b} that $(a+h,q)=1$ for every $h\in H$.

In view of \cref{lemma_DHLinprogressions}, there exists $b(m)>b(m-1)$ such that $b(m)+H\subset\P$ and $b(m)\equiv a\bmod q$.
From $b(m)+H\subset\P$ we deduce that~\eqref{eq_proof_BBtprimesb} holds with $n=m$.
From $b(m)\equiv a\bmod q$ and the fact that $a\equiv x_p\bmod p$ for every prime $p<k2^{m}$ we deduce that~\eqref{eq_proof_BBtprimes3b} holds with $n=m$.
Finally, from~\eqref{eq_proof_BBtprimes5b}, we deduce that~\eqref{eq_proof_BBtprimes4b} holds with $n=m$ for every prime $p<k2^{m}$.
\end{proof}
\end{appendices}

\bibliographystyle{amsplain}
\bibliography{expository-open-questions.bib}

\end{document}